\documentclass{amsart}

\usepackage{ae,aecompl}
\usepackage{esint}
\usepackage{graphicx}
\usepackage[all,cmtip]{xy}
\usepackage{amsmath, amscd}
\usepackage{booktabs}
\usepackage{amsthm}
\usepackage{amssymb}
\usepackage{amsfonts}
\usepackage{qsymbols}
\usepackage{latexsym}
\usepackage{mathrsfs}
\usepackage{cite}
\usepackage{color}
\usepackage{url}
\usepackage{enumerate}
\usepackage{undertilde}
\usepackage{enumitem}
\usepackage{verbatim}
\usepackage[draft=false, colorlinks=true]{hyperref}

\allowdisplaybreaks

\setlength{\unitlength}{1cm}
\setcounter{secnumdepth}{2}

\newcommand {\abs}[1]{\lvert#1\rvert}

\newcommand {\C}{{\mathbb C}}

\newcommand {\D}{D}

\newcommand {\ud}{\mathrm{d}}
\newcommand {\ue}{\mathrm{e}}

\newcommand {\veps}{\varepsilon}

\newcommand {\Ell}{L}

\newcommand {\F}{{\mathcal{F}}}

\newcommand {\HT}{\mathcal{H}}

\newcommand {\ui}{\mathrm{i}}
\newcommand {\I}{{I}}

\newcommand {\rb}{\rangle}
\newcommand {\lb}{{\langle}}
\newcommand {\La}{{\mathcal{L}}}

\newcommand {\Ma}{{\mathcal{M}}}
\newcommand {\N}{{{\mathbb N}}}

\newcommand {\norm}[1]{\left\|#1\right\|}

\newcommand {\ph}{{\varphi}}

\newcommand {\R}{{\mathbb R}}

\newcommand {\Rn}{{\mathbb{R}^{n}}}

\newcommand {\St}{{\mathrm{St}}}
\newcommand {\Sw}{\mathcal{S}}

\newcommand {\w}{{\omega}}

\newcommand {\Z}{{{\mathbb Z}}}

\newcommand {\vanish}[1]{\relax}

\newcommand{\wh}{\widehat}
\newcommand{\wt}{\widetilde}

\DeclareFontFamily{U}{mathx}{\hyphenchar\font45}
\DeclareFontShape{U}{mathx}{m}{n}{
      <5> <6> <7> <8> <9> <10>
      <10.95> <12> <14.4> <17.28> <20.74> <24.88>
      mathx10
      }{}
\DeclareSymbolFont{mathx}{U}{mathx}{m}{n}
\DeclareFontSubstitution{U}{mathx}{m}{n}
\DeclareMathAccent{\widecheck}{0}{mathx}{"71}

\DeclareMathOperator{\Real}{Re}
\DeclareMathOperator{\Imag}{Im}

\newtheorem{theorem}{Theorem}[section]
\newtheorem{lemma}[theorem]{Lemma}
\newtheorem{proposition}[theorem]{Proposition}
\newtheorem{corollary}[theorem]{Corollary}

\theoremstyle{definition}
\newtheorem{definition}[theorem]{Definition}

\DeclareFontFamily{U}{mathx}{\hyphenchar\font45}
\DeclareFontShape{U}{mathx}{m}{n}{
      <5> <6> <7> <8> <9> <10>
      <10.95> <12> <14.4> <17.28> <20.74> <24.88>
      mathx10
      }{}
\DeclareSymbolFont{mathx}{U}{mathx}{m}{n}
\DeclareFontSubstitution{U}{mathx}{m}{n}
\DeclareMathAccent{\widecheck}{0}{mathx}{"71}

\numberwithin{equation}{section}
\protected\def\ignorethis#1\endignorethis{}
\let\endignorethis\relax

\usepackage{crossreftools}

\makeatletter
\newcommand{\optionaldesc}[2]{%
  \phantomsection
  #1\protected@edef\@currentlabel{#1}\label{#2}%
}
\makeatother

\title[$(L^{p},L^{q})$ Fourier multipliers and stability theory]{Operator-valued $(L^{p},L^{q})$ Fourier multipliers\\and stability theory for evolution equations}

\author{Jan Rozendaal}
\address{Institute of Mathematics, Polish Academy of Sciences\\
ul.~\'{S}niadeckich 8\\
00-656 Warsaw\\
Poland}
\email{jrozendaal@impan.pl}

\keywords{$C_{0}$-semigroup, decay rate, Fourier multiplier, Banach space geometry}

\subjclass[2020]{Primary 47D06. Secondary 35B40, 42B35, 46B20}

\thanks{The research leading to these results has received funding from the Norwegian Financial Mechanism 2014-2021, grant 2020/37/K/ST1/02765.}

\begin{document}

\begin{abstract}
We give an overview of some recent results on operator-valued $(L^{p},L^{q})$ Fourier multipliers and stability theory for evolution equations. The aim is to provide a relatively nontechnical introduction to the underlying ideas, emphasizing the connection between the two areas. We also indicate how operator-valued $(L^{p},L^{q})$ Fourier multipliers can be applied to functional calculus theory.  
\end{abstract}

\maketitle

\section{Introduction}\label{sec:introduction}

One of the main objectives in the theory of partial differential equations is to determine the asymptotic behavior of solutions to a given partial differential equation. For those linear partial differential equations which can be conveniently analyzed by reformulating them as evolution equations, it is well known that the long-term behavior of solutions is related to properties of the generator of the solution semigroup. In this article we indicate how recent results on vector-valued harmonic analysis yield a new connection between properties of the semigroup generator, the geometry of the underlying Banach space, and asymptotic behavior of solutions.

\subsection{Setting}

Consider the abstract Cauchy problem
\begin{equation}\label{eq:evoleq}
\begin{aligned}
\dot{u}(t)&=Au(t),\quad t\geq0,\\
u(0)&=x,
\end{aligned}
\end{equation}
where $u:[0,\infty)\to X$ for some Banach space $X$, $A$ is a closed operator on $X$, and $x\in X$. Then \eqref{eq:evoleq} has a unique mild solution for every $x\in X$, and the solution depends continuously on $x$, if and only if $A$ generates a strongly continuous semigroup $(T(t))_{t\geq0}\subseteq\La(X)$ on $X$. In this case the unique solution $u$ to \eqref{eq:evoleq} is given by $u(t)=T(t)x$ for all $t\geq0$. Moreover, if $x\in D(A)$ then $u\in C^{1}([0,\infty);X)$, and \eqref{eq:evoleq} holds pointwise. 

Throughout, we make the assumption of existence and uniqueness of solutions to \eqref{eq:evoleq}, and continuous dependence on initial data. Then, to determine the asymptotic behavior of solutions to \eqref{eq:evoleq}, one has to analyze the long-term behavior of the orbits $t\mapsto T(t)x$. To approach the latter problem one can in turn examine spectral properties of the generator $A$, thereby applying the classical paradigm for ordinary differential equations to evolution equations. 

Indeed, there exist $\w'\in\R$ and $C\geq1$ such that $\|T(t)\|_{\La(X)}\leq Ce^{\w' t}$ for all $t\geq0$. Without loss of generality, suppose that $\w'=0$. Then the resolvent $R(\w+i\xi,A)$ exists for all $\w>0$ and $\xi\in\R$, and
\begin{equation}\label{eq:Laplace}
R(\w+i\xi,A)x=\int_{0}^{\infty}e^{-t(\w+i\xi)}T(t)x\,\ud t
\end{equation}
for all $x\in X$ (see \cite[Theorem II.1.10]{Engel-Nagel00}). One can invert this Laplace transform:
\begin{equation}\label{eq:inverseFourier}
e^{-\w t}T(t)x=\frac{1}{2\pi}\int_{\R}e^{it\xi}R(\w+i\xi,A)x\,\ud \xi
\end{equation}
for $t\geq0$, where the integral converges absolutely for $x$ in suitable dense subspaces of $X$, such as the fractional domain $D((-A)^{\alpha})$ for $\alpha>1$. Loosely speaking, for a general $x\in X$ one can then use information about $R(\w+i\xi,A)x$ to bound the integral in \eqref{eq:inverseFourier} and obtain a $C'=C'_{x}\geq0$ such that
\begin{equation}\label{eq:asymptotic}
\|T(t)x\|_{X}\leq C'e^{\w t},\quad t\geq0.
\end{equation}
Of course, we already assumed from the start that $\|T(t)\|_{\La(X)}\leq C$, so this is of little use for $\w\geq 0$. However, if $R(\w+i\xi,A)$ exists for some $\w<0$ and all $\xi\in\R$, then one can still use \eqref{eq:inverseFourier} to relate information about $R(\w+i\xi,A)$ to bounds as in \eqref{eq:asymptotic}. Since $\w<0$, one thus obtains decay estimates for the orbit $t\mapsto T(t)x$. This approach to the asymptotic behavior of solutions to evolution equations is useful in practice, because the resolvent is typically more accessible than the semigroup itself.

Let us make this heuristic precise in a special case. Suppose that $i\R\subseteq\rho(A)$ and that
\begin{equation}\label{eq:supremumbound}
\sup_{\xi\in\R}\|R(i\xi,A)\|_{\La(X)}<\infty.
\end{equation} 
Then a Neumann series argument yields an $\w<0$ such that $\w+i\R\subseteq\rho(A)$ and
\begin{equation}\label{eq:supremumbound2}
\sup_{\xi\in\R}\|R(\w+i\xi,A)\|_{\La(X)}<\infty.
\end{equation}
In turn, one can use \eqref{eq:supremumbound2} to obtain
\begin{equation}\label{eq:integrability}
\int_{\R}\|R(\w+i\xi,A)x\|_{X}\,\ud \xi<\infty
\end{equation}
for all $x\in D((-A)^{\alpha})$ for $\alpha>1$. And then \eqref{eq:inverseFourier} indeed implies \eqref{eq:asymptotic} for such $x$.

This argument works on general Banach spaces. On the other hand, if $X$ is a Hilbert space, then Plancherel's theorem, \eqref{eq:inverseFourier} and \eqref{eq:supremumbound2} can be combined to yield \eqref{eq:asymptotic} for all $x\in X$. This is the so-called Gearhart--Pr\"{u}ss theorem (see~\cite[Theorem V.1.11]{Engel-Nagel00}). The difference between obtaining \eqref{eq:asymptotic} for all $x\in X$, or only for $x\in D((-A)^{\alpha})$ for some $\alpha>0$, is relevant in practice. Indeed, for most applications to partial differential equations, $A$ is a differential operator, and then elements of the fractional domains $D((-A)^{\alpha})$ get progressively ``smoother" as $\alpha$ grows. Hence, on Hilbert spaces, \eqref{eq:supremumbound} yields \eqref{eq:asymptotic} for arbitrarily rough initial data, while on general Banach spaces we only obtain \eqref{eq:asymptotic} for sufficiently smooth initial data.

Next, it is natural to attempt to relax the assumptions that we have made. The work discussed in this article is motivated by three loosely formulated problems which arise in this manner:
\begin{enumerate}[label=(P\arabic*)]
\item\label{it:problem1} Obtain \eqref{eq:asymptotic} for $x\in D((-A)^{\alpha})$, for some $\w<0$ and $0<\alpha<1$, under the assumption that the underlying Banach space has more structure than a general Banach space but less than a Hilbert space;
\item\label{it:problem2} Determine the asymptotic behavior of semigroup orbits under the assumption that there exists an $M:[0,\infty)\to[0,\infty)$ such that $M(\lambda)\to\infty$ as $\lambda\to\infty$, and such that $\|R(i\xi,A)\|_{\La(X)}\leq M(|\xi|)$ for $\xi\in\R$. 
\item\label{it:problem3} Determine the asymptotic behavior of semigroup orbits under the assumption that there exists a $g:(0,\infty)\to(0,\infty)$ such that $g(t)\to\infty$ as $t\to\infty$, and such that $\|R(\lambda,A)\|_{\La(X)}\leq g(1/\Real(\lambda))$ whenever $\Real(\lambda)>0$.
\end{enumerate}
The first setting arises, for example, in time--frequency analysis and signal processing \cite{Grochenig01,Chaichenets18}, where one encounters initial data which decays slower at infinity than $L^{2}$ functions do. Moreover, the $L^{p}$ norm captures additional information for $p\neq 2$, such as concentration and dispersion effects. On the other hand, \ref{it:problem2} is of interest, even on Hilbert spaces, when considering more refined asymptotic behavior than \eqref{eq:asymptotic}. Such behavior in turn arises in applications to damped wave equations, where \eqref{eq:supremumbound} often does not hold. Finally, Problem \ref{it:problem3} arises when one relaxes standard resolvent conditions which guarantee that a semigroup is uniformly bounded.

\subsection{Contents of this article}

We will discuss the following recent developments, which will be described in more detail in the next section:
\begin{enumerate}[label=(D\arabic*)]
\item\label{it:develop1} The development of a theory of operator-valued $(L^{p},L^{q})$ Fourier multipliers for $p\neq q$;
\item\label{it:develop2} An application of the theory of $(L^{p},L^{q})$ Fourier multipliers to Problem \ref{it:problem1}, indicating how such multipliers can be used to obtain concrete decay rates on non-Hilbertian Banach spaces;
\item\label{it:develop3} A connection between the theory of $(L^{p},L^{q})$ Fourier multipliers and polynomial decay behavior in Problem \ref{it:problem2}, yielding concrete decay rates;
\item\label{it:develop4} An application of the theory of $(L^{p},L^{q})$ Fourier multipliers to Problem \ref{it:problem3}, yielding concrete growth rates;
\item\label{it:develop5} A characterization of those functions $M$ in \ref{it:problem2} for which there exist $c,C>0$ such that $\|T(t)A^{-1}\|_{\La(X)}\leq \frac{C}{M^{-1}(ct)}$ for sufficiently large $t$, whenever $(T(t))_{t\geq0}$ is a uniformly bounded semigroup on a Hilbert space $X$.
\end{enumerate}
We also give an application of the theory of $(L^{p},L^{q})$ Fourier multipliers to functional calculus theory. These results have applications to stability theory of a different nature, namely the stability of numerical approximation schemes for solutions to evolution equations.

Development \ref{it:develop1} is based on the articles \cite{Rozendaal-Veraar18,Rozendaal-Veraar17b}, and \ref{it:develop2} and \ref{it:develop3} rely on \cite{Rozendaal-Veraar18a}. Development \ref{it:develop4} is contained in \cite{Rozendaal-Veraar18b}, and \ref{it:develop5} is based on work in \cite{Batty-Duyckaerts08,Chill-Seifert16,Borichev-Tomilov10,BaChTo16,RoSeSt19}. The application of $(L^{p},L^{q})$ Fourier multipliers to functional calculus theory is taken from \cite{Rozendaal19}. 
It is important to stress that the work described in this article is a continuation of developments by many authors, and we will attempt to indicate the essential role that this work played in the material discussed here. Moreover, we focus on the ideas underlying the proofs, referring to the relevant articles for technical details.

This article is organized as follows. In Section \ref{sec:previous} we discuss Problems \ref{it:problem1}, \ref{it:problem2} and \ref{it:problem3} in more detail. Sections \ref{sec:multipliers}, \ref{sec:exponential}, \ref{sec:polynomial}, \ref{sec:growth} and \ref{sec:refined} deal with Developments \ref{it:develop1}, \ref{it:develop2}, \ref{it:develop3}, \ref{it:develop4} and \ref{it:develop5}, respectively. In Appendix \ref{sec:calctypecotype} we discuss an application of $(L^{p},L^{q})$ Fourier multipliers to functional calculus theory. Finally, in Appendix \ref{sec:open} we formulate some open problems.

\subsection{Aim of this article}

Our goal in writing this article has not been to give a reasonably complete overview of the asymptotic theory of evolution equations. Many excellent such texts exist, both classical (see e.g.~\cite{ArBaHiNe11,vanNeerven96b,Engel-Nagel00}) and more recent ones \cite{ChSeTo20}. The aim is also not to focus on vector-valued harmonic analysis as such; there are already various excellent sources concerning classical and recent developments in this area (for example, \cite{Kunstmann-Weis04,HyNeVeWe16,HyNeVeWe17}). Instead, we choose to focus on a few recently established \emph{connections} between the two areas. Whereas it has long been known that there are direct links between stability theory and vector-valued harmonic analysis, those connections are of a different nature than the ones presented here. Moreover, to the author's best knowledge, there are few non-technical texts that focus specifically on connections between the two areas. 

Although the ideas presented here can in principle be distilled from the original articles, we will attempt to draw a line from classical work to more recent developments, in a way which will hopefully add to what can be found in the statements and proofs of the results themselves. A concrete example of this concerns Section \ref{sec:refined}. Most of the results there involve Hilbert spaces, on which the relevant vector-valued harmonic analysis involves only Plancherel's theorem. Nonetheless, as we will attempt to explain, even there the connection to $(L^{p},L^{q})$ Fourier multiplier theory plays a role, albeit it under the surface.

Our target audience consists, for the most part, of researchers in evolution equations, although we have attempted to make the text accessible for students and researchers from related areas. For example, although many of the theorems involve concepts from Banach space geometry, their specific definitions are not crucial by any means; it is the connection between the areas that we wish to highlight. For many applications the natural examples will be $L^{p}$ spaces, but even for these concrete spaces notions from Banach space geometry arise naturally in the proofs.

In conclusion, we hope that this article can serve as an introduction to, and overview of, some recent developments connecting stability theory and vector-valued harmonic analysis, to complement the more technical statements and proofs that can be found elsewhere.

\subsection{Acknowledgments}

This article is based on the author's habilitation essay. The author would like to thank Piotr Biler, Tomasz Kania, Christian Le Merdy and Mieczysław Mastylo for their reviews. The author is also particularly grateful to David Seifert, Reinhard Stahn and Mark Veraar for the collaborations which led to some of the work discussed here, as well as for helpful remarks, and to Charles Batty, Ralph Chill and Yuri Tomilov for inspiration. Finally, the author would like to thank the anonymous referee for useful comments.

\subsection{Notation}

The natural numbers are $\N=\{1,2,\ldots\}$, and $\Z_{+}:=\N\cup\{0\}$. The H\"{o}lder conjugate of $p\in[1,\infty]$ is denoted by $p'$, so that $\frac{1}{p}+\frac{1}{p'}=1$.

The space of bounded linear operators between Banach spaces $X$ and $Y$ is $\La(X,Y)$, and $\La(X):=\La(X,X)$. The spaces of $X$-valued Schwartz functions and tempered distributions on $\Rn$ are $\Sw(\Rn;X)$ and $\Sw'(\Rn;X)$, respectively, and for $X=\C$ we simply write $\Sw(\Rn)=\Sw(\Rn;\C)$ and $\Sw'(\Rn)=\Sw'(\Rn;\C)$. The Fourier transform of $f\in\Sw'(\Rn;X)$ is denoted by $\F f$ or $\widehat{f}$. If $f\in\Ell^{1}(\Rn;X)$ then
\begin{align*}
\F f(\xi)=\int_{\Rn}e^{-i x\cdot \xi}f(x)\ud x,\quad \xi\in\Rn.
\end{align*}
The identity operator on $X$ is denoted by $\I_{X}$, and we typically write $\lambda$ for $\lambda\I_{X}$ when $\lambda\in\C$. 
The domain of a closed operator $A$ on $X$ is $\D(A)$, a Banach space with the norm
\[
\norm{x}_{\D(A)}:=\norm{x}_{X}+\norm{Ax}_{X},\quad x\in \D(A).
\]
The spectrum of $A$ is $\sigma(A)$, and the resolvent set is $\rho(A)=\C\setminus \sigma(A)$. We write $R(\lambda,A)=(\lambda -A)^{-1}$ for the resolvent operator of $A$ at $\lambda\in\rho(A)$.

We write $f(s)\lesssim g(s)$ to indicate that $f(s)\leq Cg(s)$ for all $s$ and a constant $C>0$ independent of $s$, and similarly for $f(s)\gtrsim g(s)$ and $g(s)\eqsim f(s)$.

For the required background on evolution equations and $C_{0}$-semigroups we refer to \cite{Engel-Nagel00,ArBaHiNe11}.

\section{Discussion of the problems}\label{sec:previous}

In this section we discuss Problems \ref{it:problem1}, \ref{it:problem2} and \ref{it:problem3} in more detail. We will mostly focus on developments that preceded the main thrust of this article.

\subsection{Discussion of \ref{it:problem1}}\label{subsec:problem1}

To start off, it is illustrative to examine for a moment a classical problem from harmonic analysis. 

\subsubsection{Fourier multipliers}

Given $p\in[1,\infty)$ and a measurable function $m:\Rn\to\C$ of temperate growth, one would like to determine conditions on the \emph{symbol} $m$ such that the \emph{Fourier multiplier operator}
\begin{equation}\label{eq:Fouriermult}
T_{m}(f):=\F^{-1}(m\cdot\F(f)),\quad f\in\Sw(\Rn),
\end{equation}
extends boundedly to $L^{p}(\Rn)$. For $p=2$, it follows from Plancherel's theorem that $T_{m}\in \La(L^{2}(\Rn))$ if and only if $m\in L^{\infty}(\Rn)$, with
\[
\|T_{m}\|_{\La(L^{2}(\Rn))}=\|m\|_{L^{\infty}(\Rn)}.
\]
A simple criterion also exists for $p=1$; one has $T_{m}\in \La(L^{1}(\Rn))$ if and only if $m\in L^{1}(\Rn)$. However, this problem is highly nontrivial for $1<p<\infty$, and a concrete characterization of all bounded multipliers is not known. On the other hand, a useful sufficient condition is given by the Mikhlin multiplier theorem (see~\cite[Theorem 5.2.7]{Grafakos14a}). For example, in the case where $n=1$, if $m\in C^{1}(\R\setminus\{0\})$ and
\begin{equation}\label{eq:Mikhlinscalar}
\sup_{\xi\neq0}|m(\xi)|+|\xi m'(\xi)|<\infty,
\end{equation}
then $T_{m}\in \La(L^{p}(\R))$ for all $1<p<\infty$.

It turns out that this classical problem from harmonic analysis also has applications to evolution equations, most notably to questions of maximal $L^{p}$ regularity (see~\cite{Kunstmann-Weis04}). Here one generalizes \eqref{eq:Fouriermult} to the infinite dimensional setting by considering Banach spaces $X$ and $Y$, operator-valued symbols $m:\Rn\to\La(X,Y)$, and the corresponding operator
\[
T_{m}(f):=\F^{-1}(m\cdot\F(f)),\quad f\in\Sw(\Rn;X).
\]
For $p,q\in[1,\infty]$ we write $m\in\Ma_{p,q}(X,Y)$ if there exists a $C\geq0$ such that $\|T_{m}(f)\|_{L^{q}(\Rn;Y)}\leq C\|f\|_{L^{p}(\Rn;X)}$ for all $f\in\Sw(\Rn;X)$, and we say that $m$ is a \emph{bounded $(L^{p}(\Rn;X),L^{q}(\Rn;Y))$ Fourier multiplier}. For $p<\infty$ this implies that $T_{m}$ extends to a bounded map from $L^{p}(\Rn;X)$ to $L^{q}(\Rn;Y)$. For $p=\infty$ the Schwartz functions are not dense in $L^{\infty}(\Rn;X)$, but the class of $(L^{\infty}(\Rn;X),L^{q}(\Rn;Y))$ Fourier multipliers is sufficiently small that the corresponding operator does in fact extend to $L^{\infty}(\Rn;X)$, in a different manner. Moreover, the case $p=\infty$ will not play an explicit role for us, and using this terminology simplifies the statements of the results. Accordingly, we write 
\begin{equation}\label{eq:multipliernorm}
\|m\|_{\Ma_{p,q}(X,Y)}:=\|T_{m}\|_{\La(L^{p}(\Rn;X),L^{q}(\Rn;Y))}
\end{equation}
for $m\in\Ma_{p,q}(X,Y)$, and $\Ma_{p,q}(X):=\Ma_{p,q}(X,X)$. For simplicity, we omit the dimension $n$ in this notation.

A generalization of the Mikhlin condition \eqref{eq:Mikhlinscalar} to the operator-valued setting was obtained by Weis in~\cite{Weis01}. More precisely, for $n=1$, one has $m\in\Ma_{p,p}(X,Y)$ for all $1<p<\infty$ if $X$ and $Y$ are UMD spaces, $m\in C^{1}(\R\setminus\{0\};\La(X,Y))$, and if
\begin{equation}\label{eq:Weis}
\{m(\xi)\mid \xi\in\R\setminus\{0\}\} \text{ and }\{\xi m'(\xi)\mid \xi\in\R\setminus\{0\}\}
\end{equation}
are $R$-bounded in $\La(X,Y)$. For the definition of $R$-boundedness and the UMD property we refer to~\cite{HyNeVeWe16,HyNeVeWe17}. A specific example of an operator-valued symbol to which this theory applies is
\[
m(\xi):=R(i\xi,A),\quad\xi\in\R,
\]
where $A$ generates a $C_{0}$-semigroup and $\{\lambda\in\C\mid \Real(\lambda)\geq0\}\subseteq\rho(A)$. The multiplier condition of Weis, for this specific symbol, is the main tool in the theory of maximal $L^{p}$ regularity \cite{Kunstmann-Weis04}.

\subsubsection{Stability theory}

The resolvent also appears as a Fourier multiplier in the asymptotic theory of evolution equations. Indeed, suppose as before that $A$ generates a uniformly bounded $C_{0}$-semigroup $(T(t))_{t\geq0}$, and assume additionally that \eqref{eq:supremumbound} holds: $\sup_{\xi\in\R}\|R(i\xi,A)\|_{\La(X)}<\infty$. Then we may fix an $\w<0$ such that \eqref{eq:supremumbound2} holds: $\sup_{\xi\in\R}\|R(\w+i\xi,A)\|_{\La(X)}<\infty$. Now let $\wt{\w}>0$, and this time set 
\begin{equation}\label{eq:mstability}
m(\xi):=I_{X}+(\wt{\w}-\w) R(\w+i\xi,A),\quad\xi\in\R.
\end{equation}
Then
\begin{equation}\label{eq:deff}
t\mapsto f(t):=\begin{cases}
e^{-\wt{w} t}T(t)x&t\geq0,\\
0&t<0,
\end{cases}
\end{equation}
is an element of $L^{p}(\R;X)$ for all $x\in X$ and $1\leq p\leq \infty$. Moreover, \eqref{eq:Laplace} and the resolvent identity yield
\[
m(\xi)\F f(\xi)=(I_{X}+(\wt{\w}-\w)R(\w+i\xi,A))R(\wt{\w}+i\xi,A)x=R(\w+i\xi,A)x.
\]
Hence, for $x\in D((-A)^{\alpha})$ for $\alpha>1$, one can use \eqref{eq:integrability} to take the inverse Fourier transform in \eqref{eq:inverseFourier} and obtain
\begin{equation}\label{eq:imagemult}
e^{-\w t}T(t)x=\F^{-1}(m\cdot\F f)(t)=T_{m}(f)(t)
\end{equation}
for $t\geq0$. Since the resolvent is absolutely integrable for $x\in D((-A)^{\alpha})$ with $\alpha>1$, this in turn yields \eqref{eq:asymptotic} for such $x$: 
\begin{equation}\label{eq:decayBanach}
\|T(t)x\|_{X}\leq C'e^{\w t}\|x\|_{D((-A)^{\alpha})},\quad t\geq0.
\end{equation}
On the other hand, if $m$ is a bounded $(L^{p}(\Rn;X),L^{p}(\Rn;X))$ Fourier multiplier for some $p\in[1,\infty]$, then it is not too difficult to combine \eqref{eq:imagemult} with the density of $D((-A)^{\alpha})$ in $X$ to extend \eqref{eq:asymptotic} to all $x\in X$. This abstract connection between Fourier multiplier theory and the asymptotic theory of evolution equations can be found in \cite{Latushkin-Shvydkoy01}, for example, although an equivalent formulation in terms of convolutions goes back further (see e.g.~\cite[Section 3.3]{vanNeerven96b}).

Now, if $X$ is a Hilbert space, then Plancherel's theorem and \eqref{eq:supremumbound2} imply that $m\in\Ma_{2,2}(X,X)$, thereby recovering the Gearhart--Pr\"{u}ss theorem. However, this argument no longer works if $X$ is not a Hilbert space. On the other hand, many spaces which arise in applications are themselves $L^{q}$ spaces over some measure space, and for $1<q<\infty$ these are UMD spaces. Hence it is natural to attempt to apply Weis' theorem. Unfortunately, although assumption \eqref{eq:Weis} is natural for the theory of maximal regularity, it is not particularly useful for applications to the asymptotic theory of semigroups. Indeed, the assumption that 
\[
\{\xi m'(\xi)\mid \xi\in\R\setminus\{0\}\}=\{-i(\wt{\w}-\w)\xi R(\w+i\xi,A)^{2}\mid \xi\in\R\setminus\{0\}\}\subseteq\La(X)
\]
 is $R$-bounded, or even just uniformly bounded, comes very close to assuming that $A$ generates an analytic semigroup. Moreover, the (exponential) asymptotic theory of analytic semigroups is well known and much simpler than the general case, cf.~\cite[Theorem 5.1.12]{ArBaHiNe11}. 
 
For a long time, apart from some exceptions discussed below, this obstacle meant that operator-valued Fourier multipliers were of little practical use for the asymptotic theory of evolution equations.

\subsection{Discussion of \ref{it:problem2}}\label{subsec:problem2}

The study of more refined asymptotic behavior arises naturally from applications to damped wave equations, as will be explained here. 

\subsubsection{Motivation for the problem}

One says that a subset $U$ of a compact connected Riemannian manifold $M$ without boundary satisfies the \emph{geometric control condition} if there exists an $L>0$ such that every geodesic of length at least $L$ intersects $U$. It was first shown by Rauch and Taylor in~\cite{Rauch-Taylor75} that the energy of solutions to a damped wave equation, with smooth nonzero damping function $b:M\to[0,\infty)$, decays exponentially if there exists a $c>0$ such that the set $\{x\in M\mid b(x)\geq c\}$ satisfies the geometric control condition. It follows from~\cite{Ralston69} that the geometric control condition is also necessary for exponential energy decay, and similar statements hold on manifolds with boundary (see~\cite{BaLeRa92,Burq-Gerard97}). Moreover, it is easy to construct examples where the damping region does not satisfy the geometric control condition; simply ensure that there is no damping on an open neighborhood of at least one geodesic.

We will not go into detail regarding the connection between the geometric control condition and exponential energy decay, but we wish to point out that it is an instance of the phenomenon of \emph{propagation of singularities}. When applied to the Laplacian on a manifold, this phenomenon dictates that singularities of initial data propagate along geodesics under the wave equation. An equivalent, and often more useful, statement is that regularity of initial data propagates in the same manner. On a quantitative level, modulo a lower-order Sobolev norm, one can bound the space-time $L^{2}$ norm of the solution on a neighborhood of a geodesic in terms of the $L^{2}$ norm on a neighborhood of a single point on the geodesic. By combining this with the geometric control condition for the damping region, one can derive exponential energy decay. The converse implication follows by constructing approximate solutions to the wave equation which concentrate along a single geodesic.

Returning to the main topic of this article, when formulating a damped wave equation as an evolution equation on $X=L^{2}(M)\times W^{1,2}(M)$, the solution semigroup $(T(t))_{t\geq0}$ is uniformly bounded, its generator $A$ satisfies $i\R\subseteq\rho(A)$, and for each $x\in X$ one has $T(t)x\to0$ as $t\to\infty$. However, the uniform boundedness condition \eqref{eq:supremumbound} for the resolvent holds if and only the damping region satisfies the geometric control condition. Hence, when this condition is not satisfied, the arguments from the previous section do not apply, and one expects to encounter more refined decay behavior (see e.g.~\cite{Lebeau96,Anantharaman-Leautaud14,Burq98}). 

Heuristically, it follows from \eqref{eq:inverseFourier}, with $\w=0$, that growth rates for the resolvent are inversely proportional to decay rates for the semigroup. However, when formalizing this heuristic, there are two obstacles. Firstly, in general one cannot expect a uniform decay rate to hold for all $x\in X$, since a simple argument shows that such a uniform rate would imply exponential decay and \eqref{eq:supremumbound}. Hence it is natural to consider initial data $x\in D(A)$, for which the semigroup orbit $t\mapsto T(t)x$ is a classical solution to the associated abstract Cauchy problem \eqref{eq:evoleq}. Secondly, the generator of the solution semigroup to a damped wave equation is not a normal operator, and therefore it is nontrivial to determine the precise relationship between decay rates for the semigroup and growth rates for the resolvent. 

Note that it is highly relevant in practice to determine the relationship between resolvent growth rates and semigroup decay rates. Indeed, as indicated before, the resolvent is more tractable than the semigroup, and in practical applications one typically proceeds by obtaining suitable bounds for the resolvent. These yield decay rates for classical solutions to the evolution equation, and one would like to know exactly what decay rate corresponds to the observed resolvent growth rate.

\subsubsection{Refined decay rates}

Next, let us make this discussion more precise. Let $A$ generate a uniformly bounded $C_{0}$-semigroup $(T(t))_{t\geq0}$ on a Banach space $X$ such that $i\R\subseteq\rho(A)$, and let $M:[0,\infty)\to[0,\infty)$ be such that $M(\lambda)\to\infty$ as $\lambda\to\infty$, and 
\begin{equation}\label{eq:defM}
\|R(i\xi,A)\|_{\La(X)}\leq M(|\xi|),\quad\xi\in\R.
\end{equation}
To determine uniform decay rates for classical solutions to \eqref{eq:evoleq}, one has to obtain norm estimates for $\|T(t)A^{-1}\|_{\La(X)}$, since 
\[
\|T(t)x\|_{X}\leq \|T(t)A^{-1}\|_{\La(X)}\|Ax\|_{X}\leq \|T(t)A^{-1}\|_{\La(X)}\|x\|_{D(A)}
\]
for all $x\in D(A)$.

In~\cite{BaEnPrSc06}, Batkai, Engel, Pr\"{u}ss and Schnaubelt obtained some early results on the relationship between growth rates for $M$ and decay rates for the semigroup. They considered $M$ such that $M(\lambda)\eqsim \lambda^{\alpha}$ as $\lambda\to\infty$, for some $\alpha>0$, and showed that in this case \eqref{eq:defM} implies that for all $\veps>0$ there exists a $C_{\veps}\geq 0$ such that
\[
\|T(t)A^{-1}\|_{X}\leq C_{\veps} \frac{1}{t^{\frac{1}{\alpha}-\veps}},\quad t\to\infty.
\]
They also proved a converse implication, again with a polynomial $\veps$ loss. On Hilbert spaces, sharper estimates were obtained by Liu and Rao in~\cite{Liu-Rao05}.

This early work was improved upon by Batty and Duyckaerts in~\cite{Batty-Duyckaerts08}, using an approach from Tauberian theory. Suppose without loss of generality that
\[
M(\lambda)=\sup_{|\xi|\leq\lambda}\|R(i\xi,A)\|_{\La(X)},\quad\lambda\geq0,
\]
and set
\[
M_{\log}(\lambda):=M(\lambda)\big(\log(1+M(\lambda))+\log(1+\lambda)\big),\quad \lambda\geq0.
\]
Since $M$ and $M_{\log}$ are non-decreasing, they have (possibly non-unique) right inverses $M^{-1}$ and $M_{\log}^{-1}$, respectively. Batty and Duyckaerts showed that there exist $c,C>0$ such that
\begin{equation}\label{eq:BattyDuyck}
\frac{c}{M^{-1}(Ct)}\leq \|T(t)A^{-1}\|_{\La(X)}\leq \frac{C}{M_{\log}^{-1}(ct)}
\end{equation}
for sufficiently large $t$. For polynomial $M$, the left-hand side differs from the right-hand side by a logarithmic factor. They also conjectured that the second inequality in \eqref{eq:BattyDuyck} is sharp on general Banach spaces, but that it can be improved on Hilbert spaces.

Their conjecture was, for the most part, proved to be correct by Borichev and Tomilov in the seminal paper~\cite{Borichev-Tomilov10}. They showed that, indeed, the right-hand side of \eqref{eq:BattyDuyck} cannot be improved on general Banach spaces. Moreover, if $X$ is a Hilbert space and if $M(\lambda)\eqsim \lambda^{\alpha}$ for $\lambda\geq1$, then 
\begin{equation}\label{eq:noloss}
\|T(t)A^{-1}\|_{\La(X)}\leq \frac{C}{M^{-1}(ct)}
\end{equation}
for certain $c,C>0$ and for sufficiently large $t$. Combined with the first inequality of \eqref{eq:BattyDuyck} this shows, up to constants, exactly what the best decay rate is which can be expected from a classical solution: one has $\|T(t)A^{-1}\|_{\La(X)}\eqsim t^{-1/\alpha}$ as $t\to\infty$. This work has been used extensively to obtain rates of decay for damped wave equations (see e.g.~the references in~\cite{BaChTo16,ChSeTo20}).

Note that, although the case of polynomially growing $M$ is of the most interest for applications,~\cite{Borichev-Tomilov10} did not fully resolve the conjecture by Batty and Duyckaerts. Moreover, one can construct examples showing that \eqref{eq:noloss} does not hold on Hilbert spaces for all $M$, even if $(T(t))_{t\geq0}$ is assumed to be a normal semigroup. It thus remained an open question to determine for which $M$ \eqref{eq:noloss} holds. In~\cite{BaChTo16}, Batty, Chill and Tomilov used advanced tools from functional calculus theory to obtain \eqref{eq:noloss} for certain so-called \emph{regularly varying} $M$, a class which strictly contains the polynomials. They also showed that \eqref{eq:noloss} holds for all normal semigroups $(T(t))_{t\geq0}$ if and only if $M$ has a property called \emph{positive increase}. These properties are introduced in Definition~\ref{def:posinc}, but here we note that every regularly varying function (of positive index) has positive increase, whereas not every function of positive increase is regularly varying. Hence \cite{BaChTo16} did not completely close the gap between the necessary and sufficient conditions for \eqref{eq:noloss}.

\subsection{Discussion of \ref{it:problem3}}\label{subsec:problem3}

So far, we have discussed two problems concerning decay rates for semigroup orbits, under the assumption that the imaginary axis is contained in the resolvent set of the generator. By contrast, Problem \ref{it:problem3} concerns asymptotic behavior in a setting where this assumption is dropped. Instead, we assume that the resolvent is bounded on imaginary lines, and that it blows up at a specified rate as those imaginary lines approach the imaginary axis. In this case, one can only hope to derive more refined \emph{growth} behavior for semigroup orbits. 

\subsubsection{The Kreiss condition}\label{subsubsec:Kreiss}

To motivate this problem, we make the setting more precise. Let $(T(t))_{t\geq0}$ be a $C_{0}$-semigroup with generator $A$ on a Banach space $X$, and suppose that $\lambda\in\rho(A)$ for all $\lambda\in\C$ with $\Real(\lambda)>0$. In applications, one often wants to know whether $(T(t))_{t\geq0}$ is uniformly bounded. The Hille--Yosida theorem provides a necessary and sufficient condition for this to be true: there exists a $C\geq0$ such that 
\begin{equation}\label{eq:HilleYosida}
\|R(\lambda,A)^{n}\|_{\La(X)}\leq \frac{C}{(\Real(\lambda))^{n}}
\end{equation}
for all $n\in\N$ and $\lambda\in\C$ with $\Real(\lambda)>0$. Although the resolvent is much more tractable than the semigroup itself, \eqref{eq:HilleYosida} requires an estimate for all powers of the resolvent, which makes it inconvenient for many applications. On the other hand, $(T(t))_{t\geq0}$ is contractive if and only if 
\begin{equation}\label{eq:contraction}
\|R(\lambda,A)\|_{\La(X)}\leq \frac{1}{\Real(\lambda)}
\end{equation}
for all $\lambda\in\C$ with $\Real(\lambda)>0$. Clearly this is a much more convenient condition to check than \eqref{eq:HilleYosida}. 

It is natural to ask what can be said about $(T(t))_{t\geq0}$ given the following intermediate condition between \eqref{eq:HilleYosida} and \eqref{eq:contraction}: there exists a $C\geq0$ such that
\begin{equation}\label{eq:Kreiss}
\|R(\lambda,A)\|_{\La(X)}\leq \frac{C}{\Real(\lambda)}
\end{equation}
for all $\lambda\in\C$ with $\Real(\lambda)>0$. This condition was introduced in~\cite{Kreis59}, and it is nowadays called the \emph{Kreiss condition}. If $X$ is a Hilbert space then it guarantees that $\|T(t)\|_{\La(X)}$ grows at most linearly as $t\to\infty$, while for a general Banach space $X$ it is possible that $\|T(t)\|_{\La(X)}$ grows exponentially as $t\to\infty$ (see~\cite{EisZwa06}). 

In various applications one encounters semigroups $(T(t))_{t\geq0}$ such that $\|T(t)\|_{\La(X)}$ grows polynomially as $t\to\infty$, and the discussion above then begs the question how such refined growth behavior is related to estimates for the resolvent. More precisely, let $g:(0,\infty)\to(0,\infty)$ be non-decreasing and such that
\begin{equation}\label{eq:Kreissgeneral}
\|R(\lambda,A)\|_{\La(X)}\leq g\Big(\frac{1}{\Real(\lambda)}\Big)
\end{equation}
for all $\lambda\in\C$ with $\Real(\lambda)>0$. One expects that the rate of resolvent blow-up in \eqref{eq:Kreissgeneral} is related to the growth of the associated semigroup, as is the case for the Kreiss condition~\eqref{eq:Kreiss}. In~\cite{Helffer-Sjostrand10} (see also \cite{Helffer-Sjostrand21}), Helffer and Sj\"{o}strand showed that this is indeed the case on Hilbert spaces, but it is not immediately clear how to extend their approach to more general Banach spaces.

In fact, to conclude this section and to connect Problems \ref{it:problem1}, \ref{it:problem2} and \ref{it:problem3}, we remark that it is not at all clear from the techniques discussed so far whether there is a connection between the theory of operator-valued Fourier multipliers, which arises naturally in \ref{it:problem1} when considering exponential decay of semigroups, and subtle asymptotic behavior as in \ref{it:problem2} and \ref{it:problem3}.

\section{Operator-valued $(L^{p},L^{q})$ multipliers}\label{sec:multipliers}

In this section we discuss Development \ref{it:develop1}. This work arose from the realization that several known results concerning Problem \ref{it:problem1}, which had been obtained using disparate methods, could be proved in a unified manner by framing them in terms of $(L^{p},L^{q})$ Fourier multiplier properties of the resolvent. Doing so also yielded new results on exponential stability of semigroups on non-Hilbertian Banach spaces. However, not much was known about the theory of operator-valued $(L^{p},L^{q})$ Fourier multipliers, perhaps for reasons which will be explained below. In particular, it seems that concrete multiplier conditions and a connection to Banach space geometry were not known prior to \cite{Rozendaal-Veraar18,Rozendaal-Veraar17b}. We will focus on the theory developed in those articles, but we refer to \cite{Dominguez-Veraar21} for a recent contribution in a similar direction.

We will first discuss operator-valued Fourier multipliers between $L^{p}(\Rn;X)$ and $L^{q}(\Rn;Y)$, for Banach spaces $X$ and $Y$, and then focus on multipliers between appropriate vector-valued Besov spaces. Multipliers of the latter type arise naturally when considering certain endpoint cases in the $(L^{p},L^{q})$ theory.

\subsection{Scalar multipliers}\label{subsec:scalarmultipliers1}

In the scalar case, the theory of $(L^{p}(\Rn),L^{q}(\Rn))$ Fourier multipliers for $p\neq q$ is relatively straightforward. If $p>q$, then the only bounded Fourier multiplier from $L^{p}(\Rn)$ to $L^{q}(\Rn)$ is the zero operator. For $p<q$, one can always combine $(L^{p}(\Rn),L^{p}(\Rn))$ multiplier conditions with Sobolev embeddings to obtain $(L^{p}(\Rn),L^{q}(\Rn))$ multipliers. However, loosely speaking, to obtain boundedness results for a large class of Fourier multipliers, it is necessary that $p\leq 2\leq q$ (see~\cite{Hormander60}). In this case, let $m:\Rn\to\C$ be measurable and such that
\begin{equation}\label{eq:condm}
\sup_{\xi\neq0}|\xi|^{n(\frac{1}{p}-\frac{1}{q})}|m(\xi)|<\infty.
\end{equation}
For $s\in\R$, write $|D|^{s}$ for the Fourier multiplier with symbol $\xi\mapsto |\xi|^{s}$. Let $s_{1},s_{2}\in[1,\infty]$ be such that $\frac{1}{s_{1}}=n(\frac{1}{p}-\frac{1}{2})$ and $\frac{1}{s_{2}}=n(\frac{1}{2}-\frac{1}{q})$, and let $\dot{H}^{p}_{s_{1}}(\Rn)=|D|^{-s_{1}}L^{p}(\Rn)$ and $\dot{H}^{2}_{s_{2}}(\Rn)=|D|^{-s_{2}}L^{q}(\Rn)$ be homogeneous Bessel potential spaces\footnote{We use this terminology, as opposed to the more common ``homogeneous Sobolev spaces", because in the vector-valued setting Bessel potential spaces do not in general coincide with Sobolev spaces, as defined in the classical sense.}. Then one can use Sobolev embeddings, Plancherel's theorem, and \eqref{eq:condm} to write
\begin{equation}\label{eq:classicalmult}
\begin{aligned}
\|T_{m}(f)\|_{L^{q}(\Rn)}&\lesssim \|T_{m}(f)\|_{\dot{H}^{2}_{s_{2}}(\Rn)}=\big\||D|^{s_{2}}T_{m}(f)\big\|_{L^{2}(\Rn)}\\
&\lesssim \big\||D|^{-s_{1}}f\big\|_{L^{2}(\Rn)}=\big\|f\big\|_{\dot{H}^{2}_{-s_{1}}(\Rn)}\lesssim\|f\|_{L^{p}(\Rn)}
\end{aligned}
\end{equation}
for all $f\in\Sw(\Rn)$, thereby showing that $T_{m}:L^{p}(\Rn)\to L^{q}(\Rn)$ is bounded. Note that, unlike in the case where $p=q\neq 2$, one does not require any smoothness of $m$ for the resulting multiplier theorem. Although it is possible to obtain more sophisticated results, this procedure already yields a substantial class of useful $(L^{p}(\Rn),L^{q}(\Rn))$ Fourier multipliers.

The same procedure works for $(L^{p}(\Rn;X),L^{q}(\Rn;Y))$ multipliers with operator-valued symbols $m:\Rn\to\La(X,Y)$, if $X$ and $Y$ are Hilbert spaces. However, in such a case it is typically more useful to apply Plancherel's theorem for $p=q=2$. On the other hand, if $X$ or $Y$ is not a Hilbert space, then this approach fundamentally breaks down. 

It is possible that little was known about operator-valued $(L^{p}(\Rn;X),L^{q}(\Rn;Y))$ Fourier multipliers for three reasons. Firstly, the theory is relatively straightforward for $X=Y=\C$. Secondly, it is not immediately clear what applications such a theory has. And thirdly, it is not clear how to address the fact that the techniques from the scalar case seemingly break down in a fundamental manner.

Nonetheless, it turns out to be possible to extend the scalar $(L^{p},L^{q})$ Fourier multiplier theory to the operator-valued setting, under appropriate geometric assumptions on the underlying Banach spaces. Interestingly, the UMD assumption on the underlying spaces, which is so crucial for the vector-valued theory when $p=q$, hardly plays a role when $p\neq q$. Instead, the geometry of the underlying spaces $X$ and $Y$ comes in through properties such as Fourier type, and type and cotype. These properties are, to some extent, more ``quantitative" than the UMD property, in the sense that they involve parameters $p_{X}\in[1,2]$ and $q_{Y}\in[2,\infty]$ that are connected to those $p$ and $q$ for which one can obtain $(L^{p}(\Rn;X),L^{q}(\Rn;Y))$ multiplier theorems.

\subsection{Operator-valued multipliers}\label{subsec:operatorvalued}

Various techniques can be used to extend the scalar theory to the operator-valued setting. Here we highlight two such extensions. The first one, Proposition 3.9 in \cite{Rozendaal-Veraar18}, is almost trivial to prove, but it is nonetheless quite useful for applications. One says that a Banach space $X$ has \emph{Fourier type} $p\in[1,2]$ if $\F:L^{p}(\Rn;X)\to L^{p'}(\Rn;X)$ boundedly (for more on this  notion see \cite{HyNeVeWe16}). Recall that we write $m\in\Ma_{p,q}(X,Y)$ if $T_{m}:L^{p}(\Rn;X)\to L^{q}(\Rn;Y)$ is bounded.

\begin{proposition}\label{prop:Fouriertype}
Let $X$ be a Banach space with Fourier type $p\in[1,2]$, and $Y$ a Banach space with Fourier type $q'$ for $q\in[2,\infty]$. Let $r\in[1,\infty]$ be such that $\frac{1}{r}=\frac{1}{p}-\frac{1}{q}$, and let $m:\Rn\to\La(X,Y)$ be strongly measurable and such that $[\xi\mapsto\|m(\xi)\|_{\La(X,Y)}]\in L^{r}(\Rn)$. Then $m\in\Ma_{p,q}(X,Y)$.
\end{proposition}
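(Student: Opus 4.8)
\emph{Proof proposal.} The plan is to factor the multiplier operator as a composition of three maps and estimate each factor separately. Writing $T_{m}(f)=\F^{-1}(m\cdot\F f)$ for $f\in\Sw(\Rn;X)$, I would view $T_{m}=\F^{-1}\circ\pi_{m}\circ\F$, where $\pi_{m}$ is the pointwise multiplication operator $\pi_{m}(g)(\xi):=m(\xi)g(\xi)$. For the outer factors the Fourier type hypotheses are used directly: since $X$ has Fourier type $p$, the transform $\F$ is bounded from $L^{p}(\Rn;X)$ to $L^{p'}(\Rn;X)$, so $\|\F f\|_{L^{p'}(\Rn;X)}\lesssim\|f\|_{L^{p}(\Rn;X)}$; and since $q\in[2,\infty]$ we have $q'\in[1,2]$, so the Fourier type $q'$ hypothesis on $Y$ says that $\F$ is bounded from $L^{q'}(\Rn;Y)$ to $L^{(q')'}(\Rn;Y)=L^{q}(\Rn;Y)$. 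Composing with the reflection $h\mapsto h(-\,\cdot\,)$, which is an isometry of every $L^{s}(\Rn;Y)$, the inverse transform $\F^{-1}$ is then likewise bounded from $L^{q'}(\Rn;Y)$ to $L^{q}(\Rn;Y)$.

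For the middle factor, the key point is the pointwise operator-norm estimate $\|m(\xi)g(\xi)\|_{Y}\le\|m(\xi)\|_{\La(X,Y)}\|g(\xi)\|_{X}$ together with the arithmetic observation that the hypothesis $\tfrac1r=\tfrac1p-\tfrac1q$ is exactly equivalent to $\tfrac1{q'}=\tfrac1r+\tfrac1{p'}$. Hölder's inequality, applied to the scalar functions $\xi\mapsto\|m(\xi)\|_{\La(X,Y)}\in L^{r}(\Rn)$ and $\xi\mapsto\|g(\xi)\|_{X}\in L^{p'}(\Rn)$, then gives
\[
\|\pi_{m}(g)\|_{L^{q'}(\Rn;Y)}\le\big\|\,\|m(\cdot)\|_{\La(X,Y)}\,\big\|_{L^{r}(\Rn)}\,\|g\|_{L^{p'}(\Rn;X)}.
\]
One should also check that $\xi\mapsto m(\xi)g(\xi)$ is strongly measurable, but for $g=\F f$ with $f\in\Sw(\Rn;X)$ this $g$ is itself Schwartz-class, so this is routine. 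Chaining the three estimates yields $\|T_{m}(f)\|_{L^{q}(\Rn;Y)}\lesssim\big\|\,\|m(\cdot)\|_{\La(X,Y)}\,\big\|_{L^{r}(\Rn)}\,\|f\|_{L^{p}(\Rn;X)}$ for all $f\in\Sw(\Rn;X)$, which is precisely the assertion $m\in\Ma_{p,q}(X,Y)$.

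As the surrounding discussion suggests, I do not expect any genuine obstacle here; the proof is essentially bookkeeping, the two truly analytic inputs being nothing more than the Fourier type hypotheses restated as $L^{s}$-mapping properties of $\F$ and $\F^{-1}$. The only points that warrant a moment's care are the endpoint cases $p=1$ and $q=\infty$ — where the relevant Fourier type is $1$, which every Banach space possesses, so the statement stays non-vacuous and the bounds $\F\colon L^{1}\to L^{\infty}$, $\F^{-1}\colon L^{1}\to L^{\infty}$ enter trivially — and the passage from $\F$ to $\F^{-1}$ via reflection used in the first paragraph.
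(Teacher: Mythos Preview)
Your proof is correct and follows essentially the same approach as the paper's own one-line argument: apply the Fourier type of $X$ to bound $\F:L^{p}\to L^{p'}$, use H\"older's inequality with $\tfrac{1}{q'}=\tfrac{1}{r}+\tfrac{1}{p'}$ for the pointwise multiplication, and then use the Fourier type of $Y$ to bound $\F^{-1}:L^{q'}\to L^{q}$. The paper merely compresses this into a single chain of inequalities, whereas you spell out the factorization, the reflection argument for $\F^{-1}$, measurability, and the endpoint cases explicitly.
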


The proof takes one line: since $\frac{1}{r}=\frac{1}{q'}-\frac{1}{p'}$, one can use H\"{o}lder's inequality and the assumptions on $X$ and $Y$ to write
\begin{align*}
\|T_{m}(f)\|_{L^{q}(\Rn;Y)}\lesssim \|m\F(f)\|_{L^{q'}(\Rn;Y)}\lesssim \|\F(f)\|_{L^{p'}(\Rn;X)}\lesssim \|f\|_{L^{p}(\Rn;X)}
\end{align*}
for each $f\in\Sw(\Rn;X)$.

Note that the assumption on $m$ is satisfied if there exists an $\veps>0$ such that
\begin{equation}\label{eq:condm3}
\sup_{\xi\neq 0}\max(|\xi|,|\xi|^{-1})^{n(\frac{1}{p}-\frac{1}{q})+\veps}\|m(\xi)\|_{\La(X,Y)}<\infty,
\end{equation}
thereby providing an extension of \eqref{eq:condm} up to an $\veps$ loss.

Next, we discuss a result which concerns another analogue of condition \eqref{eq:condm}. Its proof is more involved and more closely related to the strategy in the scalar case, except that it relies on the notions of type and cotype of a Banach space (see~\cite{HyNeVeWe17}). Although one cannot directly rely on Plancherel's theorem unless $X$ and $Y$ are Hilbert spaces, it was realized by Kalton and Weis that there are ways to introduce a Hilbert space structure on general Banach spaces. Namely, the theory of \emph{$\gamma$-radonifying operators} (see~\cite{Kalton-Weis04,vanNeerven10}) involves function spaces $\gamma(\Rn;X)$ and $\gamma(\Rn;Y)$ on which versions of Plancherel's theorem are available, as well as other properties from the Hilbert space setting. Moreover, under type and cotype assumptions on $X$ and $Y$, these $\gamma$-spaces embed into the Bessel potential spaces $\dot{H}^{s}_{p}(\Rn;X)$ and $\dot{H}^{t}_{q}(\Rn;X)$, cf.~\cite{Veraar13}. Then, using a similar argument as in \eqref{eq:classicalmult}, one can prove the following result, Theorem 3.18 in \cite{Rozendaal-Veraar18}. 

\begin{theorem}\label{thm:typecotype}
Let $X$ and $Y$ be Banach spaces such that $X$ has type $p_{0}\in(1,2]$, and $Y$ has cotype $q_{0}\in[2,\infty)$. Let $p\in(1,p_{0})$ and $q\in(q_{0},\infty)$, and let $m:\Rn\setminus\{0\}\to\La(X,Y)$ be strongly measurable and such that
\begin{equation}\label{eq:condm2}
\{|\xi|^{n(\frac{1}{p}-\frac{1}{q})}m(\xi)\mid \xi\in\Rn\setminus\{0\}\}\subseteq\La(X,Y)
\end{equation}
is $R$-bounded. Then $m\in \Ma_{p,q}(X,Y)$. If $p_{0}=2$ or $q_{0}=2$, then one can let $p=p_{0}$ or $q=q_{0}$, respectively.
\end{theorem}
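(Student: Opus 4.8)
The plan is to mimic the scalar computation \eqref{eq:classicalmult}, but with Plancherel's theorem replaced by the $\gamma$-radonifying machinery and with Sobolev embeddings replaced by $\gamma$-space embeddings, exactly as sketched in the paragraph preceding the statement. First I would fix parameters $s_{1},s_{2}\in(0,\infty)$ with $\tfrac{1}{s_{1}}=n(\tfrac{1}{p}-\tfrac{1}{2})$ and $\tfrac{1}{s_{2}}=n(\tfrac{1}{2}-\tfrac{1}{q})$, noting $s_{1}+s_{2}=n(\tfrac{1}{p}-\tfrac{1}{q})$; the borderline cases $p_{0}=2$ or $q_{0}=2$ correspond to $s_{1}=0$ or $s_{2}=0$, where the corresponding embedding step is trivial. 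I would then establish the chain
\begin{align*}
L^{p}(\Rn;X)\hookrightarrow \dot{H}^{-s_{1}}_{p}(\Rn;X)\hookrightarrow \gamma(\Rn;X) \quad\text{and}\quad \gamma(\Rn;Y)\hookrightarrow \dot{H}^{s_{2}}_{q}(\Rn;Y)\hookrightarrow L^{q}(\Rn;Y),
\end{align*}
wait — more precisely, the embeddings of Bessel potential spaces into $\gamma$-spaces go $\dot{H}^{s_{1}}_{p}(\Rn;X)\hookrightarrow\gamma(\Rn;X)$ when $X$ has type $p_{0}>p$ and $s_{1}=n(\tfrac1p-\tfrac12)$, and dually $\gamma(\Rn;Y)\hookrightarrow\dot{H}^{-s_{2}}_{q}(\Rn;Y)$ when $Y$ has cotype $q_{0}<q$. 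These embeddings are the ones recorded in \cite{Veraar13} and are where the type/cotype hypotheses enter; I would cite them rather than reprove them.

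The core of the argument is then the estimate for $T_{m}$ \emph{as a map between the two $\gamma$-spaces}. Here I would invoke the $\gamma$-space Fourier isometry (a consequence of the ideal property of $\gamma$-radonifying operators together with the fact that $\F$ is a unitary on $L^{2}(\Rn)$), which gives $\|g\|_{\gamma(\Rn;X)}\eqsim\|\widehat{g}\|_{\gamma(\Rn;X)}$, so that the problem reduces to showing that pointwise multiplication by $|\xi|^{-n(1/p-1/q)}$ composed with the operator family $\{|\xi|^{n(1/p-1/q)}m(\xi)\}$ is bounded on $\gamma(\Rn;\cdot)$. The $R$-boundedness hypothesis \eqref{eq:condm2} is precisely what is needed to apply the Kalton--Weis multiplier theorem on $\gamma$-spaces: an $R$-bounded family of operators induces a bounded pointwise multiplier $\gamma(\Rn;X)\to\gamma(\Rn;Y)$ (see \cite{Kalton-Weis04,vanNeerven10}). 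Tracking the homogeneity: the factor $|\xi|^{-n(1/p-1/q)}=|\xi|^{-s_{1}}\cdot|\xi|^{-s_{2}}$ is absorbed into shifting the Bessel potential orders, so that composing the multiplier with the two embeddings above lands $f\in L^{p}(\Rn;X)$ first in $\dot H^{s_1}_p\hookrightarrow\gamma(\Rn;X)$, then $m$ maps into $\gamma(\Rn;Y)$ up to the homogeneity weight, then $\gamma(\Rn;Y)\hookrightarrow \dot H^{-s_2}_q\hookrightarrow L^q(\Rn;Y)$. Assembling the constants from density of $\Sw(\Rn;X)$ finishes the proof.

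I expect the main obstacle to be bookkeeping rather than any single deep step: one must verify that the homogeneous Bessel-potential orders match up correctly (so that the weight $|\xi|^{n(1/p-1/q)}$ in \eqref{eq:condm2} is exactly the one that makes the $\gamma$-multiplier bounded) and that the two embeddings are available for the open ranges $p\in(1,p_{0})$, $q\in(q_{0},\infty)$ with the stated endpoint improvement when $p_{0}=2$ or $q_{0}=2$. A secondary technical point is justifying the $\gamma$-Fourier isometry and the Kalton--Weis multiplier theorem for possibly merely strongly measurable, non-continuous $m$; this is handled by the standard density and approximation arguments for $\gamma$-spaces, and I would simply reference \cite{vanNeerven10} for these. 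No smoothness of $m$ is needed, exactly as in the scalar case \eqref{eq:classicalmult}, which is the point worth emphasizing.
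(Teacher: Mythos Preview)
Your proposal is correct and follows essentially the same approach as the paper: replace Plancherel's theorem by the $\gamma$-Fourier isometry, use the Kalton--Weis $R$-bounded multiplier theorem on $\gamma$-spaces, and invoke the type/cotype $\gamma$-Sobolev embeddings from \cite{Veraar13} to pass between Bessel potential spaces and $\gamma$-spaces, exactly as in the chain \eqref{eq:classicalmult}. The paper's own proof sketch (the paragraph immediately preceding the theorem) is precisely this, and you have correctly identified both the role of the $R$-boundedness hypothesis and the source of the open-interval restriction on $p$ and $q$.
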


The type and cotype assumptions in Theorem \ref{thm:typecotype} are, up to the limiting exponents, necessary (see \cite[Section 3.6]{Rozendaal-Veraar18}). The $R$-boundedness assumption on \eqref{eq:condm2} is used to ensure that multiplication by $m$ maps $\gamma(\Rn;X)$ to $\gamma(\Rn;Y)$. Such an assumption is also necessary in general. However, if $X=Y$ and $m$ is a scalar-valued symbol which acts on $X$ by multiplication, then the $R$-boundedness condition on \eqref{eq:condm2} reduces to assumption \eqref{eq:condm}, by the Kahane contraction principle. It appears that, even for scalar-valued multipliers, Theorem \ref{thm:typecotype} was new in the vector-valued setting.

It is not clear whether one can let $p=p_{0}$ or $q=q_{0}$ in Theorem \ref{thm:typecotype} for $p_{0}<2$ or $q_{0}>2$, respectively. Hence this result appears to yield a similar $\veps$ loss with respect to \eqref{eq:condm} as Proposition \ref{prop:Fouriertype}, and with an additional $R$-boundedness assumption. However, although the notions of Fourier type, type and cotype are closely related, they are not the same. Firstly, a Banach space $X$ with Fourier type $p$ has type $p$ and cotype $p'$. However, $X$ may in fact have a larger type and smaller cotype than is guaranteed by its Fourier type. For example, if $X=L^{\sigma}(\Omega)$ for a measure space $\Omega$ and $\sigma\in[1,\infty)$, then $X$ has Fourier type $p=\min(\sigma,\sigma')$, type $p_{0}=\min(\sigma,2)$, and cotype $q_{0}=\max(\sigma',2)$. This means that for Lebesgue spaces, which arise most commonly in applications, the decay assumption on $m$ in Theorem \ref{thm:typecotype} is substantially weaker than that in Proposition \ref{prop:Fouriertype}.

\subsection{Multipliers on Besov spaces}\label{subsec:Besov}

It is possible to avoid the $\veps$ loss in \eqref{eq:condm3}, and reach the limiting exponents in Theorem \ref{thm:typecotype}, by working with vector-valued Besov spaces (for more on these see e.g.~\cite{Amann97}). Operator-valued Fourier multipliers on Besov spaces have been considered in~\cite{Amann97,Girardi-Weis03,Hytonen04}, but there the symbols satisfy versions of Mikhlin's theorem, and the operators map between $B^{s}_{p,v}(\Rn;X)$ and $B^{s}_{p,v}(\Rn;Y)$, for $s\in\R$ and $p,v\in[1,\infty]$. Such a setting can be viewed as the analogue for Besov spaces of $(L^{p},L^{p})$ Fourier multipliers, although on Besov spaces the UMD condition does not play a role, and in the aforementioned works it is replaced by assumptions on the Fourier type of the underlying spaces. 

Analogues for Besov spaces of the results on $(L^{p},L^{q})$ Fourier multipliers can be found in \cite{Rozendaal-Veraar17b}. Again, these rely on the geometric notions of Fourier type, type and cotype. For example, a special case of \cite[Theorem 4.3]{Rozendaal-Veraar17b} implies the following result.

\begin{theorem}\label{thm:Besov}
Let $X$ and $Y$ be Banach spaces such that $X$ has type $p\in[1,2]$, and $Y$ has cotype $q\in[2,\infty]$. Let $m:\Rn\to\La(X,Y)$ be strongly measurable and such that
\begin{equation}\label{eq:condm4}
\{(1+|\xi|)^{n(\frac{1}{p}-\frac{1}{q})}m(\xi)\mid \xi\in\Rn\}\subseteq\La(X,Y)
\end{equation}
is $R$-bounded. Then $T_{m}:B^{s}_{p,v}(\Rn;X)\to B^{s}_{q,v}(\Rn;Y)$ is bounded for all $s\in\R$ and $v\in[1,\infty]$.
\end{theorem}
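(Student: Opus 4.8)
The plan is to decompose into Littlewood--Paley pieces and apply a single-block estimate obtained from Theorem~\ref{thm:typecotype} (or rather its underlying argument), then reassemble. Fix a standard inhomogeneous Littlewood--Paley partition of unity $\{\psi_{k}\}_{k\geq 0}$ on $\Rn$, so that $\supp\psi_{0}\subseteq\{|\xi|\leq 2\}$ and $\supp\psi_{k}\subseteq\{2^{k-1}\leq|\xi|\leq 2^{k+1}\}$ for $k\geq 1$, and recall that $\|f\|_{B^{s}_{p,v}(\Rn;X)}\eqsim\big\|(2^{ks}\|\psi_{k}(D)f\|_{L^{p}(\Rn;X)})_{k}\big\|_{\ell^{v}}$. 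For each $k$ I would write $\psi_{k}(D)T_{m}(f)=T_{m_{k}}(\wt\psi_{k}(D)f)$ where $m_{k}:=\psi_{k}m$ and $\wt\psi_{k}$ is a slightly fattened cutoff equal to $1$ on $\supp\psi_{k}$, and prove the block bound
\begin{equation}\label{eq:blockbound}
\|T_{m_{k}}(g)\|_{L^{q}(\Rn;Y)}\lesssim 2^{-kn(\frac{1}{p}-\frac{1}{q})}\cdot 2^{kn(\frac{1}{p}-\frac{1}{q})}\|g\|_{L^{p}(\Rn;X)}=\|g\|_{L^{p}(\Rn;X)}
\end{equation}
uniformly in $k$, for $g$ with Fourier support in an annulus of size $\sim 2^{k}$. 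The first factor comes from the Bessel-potential/$\gamma$-space chain as in \eqref{eq:classicalmult}: on a fixed dyadic annulus the homogeneous and inhomogeneous smoothing operators are comparable, so one gains a factor $2^{-kn(\frac{1}{p}-\frac12)}$ from the $\gamma(\Rn;X)\hookrightarrow\dot H^{s}_{p}$-type embedding on the input side and $2^{-kn(\frac12-\frac{1}{q})}$ on the output side, while the $R$-boundedness of $\{(1+|\xi|)^{n(1/p-1/q)}m(\xi)\}$ in \eqref{eq:condm4} — restricted to the annulus where $(1+|\xi|)\sim 2^{k}$ — contributes the compensating factor $2^{kn(\frac1p-\frac1q)}$ via the fact that $R$-bounded families act boundedly on the relevant $\gamma$-spaces (this is exactly the mechanism isolated in the proof of Theorem~\ref{thm:typecotype}).

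Granting \eqref{eq:blockbound}, the conclusion follows by a routine overlap argument: since $\wt\psi_{k}(D)f$ has Fourier support in an annulus of size $\sim2^{k}$ meeting only finitely many neighbours,
\[
2^{ks}\|\psi_{k}(D)T_{m}(f)\|_{L^{q}}\lesssim 2^{ks}\|\wt\psi_{k}(D)f\|_{L^{p}}\lesssim\sum_{|j-k|\leq 1}2^{js}\|\psi_{j}(D)f\|_{L^{p}},
\]
and taking the $\ell^{v}$ norm in $k$, together with Young's inequality for the finitely-supported convolution in the index, gives $\|T_{m}(f)\|_{B^{s}_{q,v}(\Rn;Y)}\lesssim\|f\|_{B^{s}_{p,v}(\Rn;X)}$. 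Note that the weight $2^{ks}$ is identical on both sides, which is why no restriction on $s$ is needed; this is the structural reason Besov spaces behave better here than $L^{p}$ spaces, where the endpoint exponents were out of reach.

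The main obstacle is the uniform-in-$k$ block estimate \eqref{eq:blockbound}, and specifically making the $\gamma$-space machinery interact cleanly with the dyadic localization: one must check that the embeddings $\gamma(\Rn;X)\hookrightarrow\dot H^{s}_{p}(\Rn;X)$ and $\dot H^{t}_{q}(\Rn;Y)\hookrightarrow\gamma(\Rn;Y)$ (valid under the type-$p$ and cotype-$q$ hypotheses) hold with the scaling constants one expects on a single annulus, and that multiplication by the $R$-bounded family maps $\gamma(\mathrm{annulus};X)$ to $\gamma(\mathrm{annulus};Y)$ with the stated norm — here the relevant fact is that the norm of multiplication by $m$ on $\gamma$-spaces is controlled by the $R$-bound of $\{m(\xi)\}$ over the support, not by a pointwise operator-norm bound, which is precisely why \eqref{eq:condm4} is the right hypothesis. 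Once one accepts the corresponding ingredients already used to prove Theorem~\ref{thm:typecotype}, this is essentially a bookkeeping exercise with dyadic scaling, and the endpoint cases $p=1$ or $q=\infty$ (where type/cotype is trivial and $\gamma$-spaces degenerate appropriately, or where one argues directly via Fourier type as in Proposition~\ref{prop:Fouriertype}) can be handled separately without difficulty.
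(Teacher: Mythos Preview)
Your proposal is correct and follows essentially the same approach as the paper's sketch: decompose into Littlewood--Paley pieces, use the $\gamma$-space machinery from Theorem~\ref{thm:typecotype} on each dyadic block, and exploit the fact that the $\gamma$-embeddings into Bessel potential spaces incur no $\veps$-loss for functions with compact Fourier support (this is exactly the mechanism the paper isolates, citing \cite{KaNeVeWe08}). The paper does not give a detailed proof either---it refers to \cite[Theorem 4.3]{Rozendaal-Veraar17b}---but your outline matches the stated strategy, including the role of $R$-boundedness as the correct hypothesis for multiplication on $\gamma$-spaces and the structural reason the endpoint exponents are reachable on Besov spaces.
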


An analogue of Proposition \ref{prop:Fouriertype} also holds (see \cite[Proposition 3.7]{Rozendaal-Veraar17b}).

Note that Theorem \ref{thm:Besov} indeed involves the limiting cases of the geometric assumptions in Theorem \ref{thm:typecotype}. Moreover, apart from the additional $R$-boundedness assumption near zero, \eqref{eq:condm4} is just \eqref{eq:condm2}. One can in turn get rid of this additional condition at zero by working with homogeneous Besov spaces, cf.~\cite[Theorem 4.5]{Rozendaal-Veraar18}. 

We also note that, using embeddings between Besov spaces and Bessel potential spaces, the results on Besov spaces yield an $(L^{p},L^{q})$ multiplier theorem which deals with the limiting exponents $p=p_{0}$ and $q=q_{0}$ in Theorem \ref{thm:typecotype}, but which involves a stronger condition than \eqref{eq:condm2} (see \cite[Theorem 4.6]{Rozendaal-Veraar17b}). 

On the other hand, using the embedding $B^{0}_{q,1}(\R;X)\subseteq L^{q}(\R;X)$, under the conditions of Theorem \ref{thm:Besov} one immediately obtains that $T_{m}:B^{0}_{p,1}(\Rn;X)\to L^{q}(\Rn;Y)$ is bounded. This is in turn equivalent to the statement that
\begin{equation}\label{eq:Besov}
T_{m}:B^{n(\frac{1}{p}-\frac{1}{q})}_{p,1}(\R;X)\to L^{q}(\R;Y)
\end{equation}
is bounded if $\{m(\xi)\mid \xi\in\Rn\}\subseteq\La(X,Y)$ is $R$-bounded, a fact which will be used in Appendix \ref{sec:calctypecotype}.

 The proof of Theorem \ref{thm:Besov} again uses the Hilbert space structure which is captured by the spaces $\gamma(\Rn;X)$ and $\gamma(\Rn;Y)$. However, the $\veps$-loss in the embeddings of these spaces into the scale of Bessel potential spaces, which in turn gives rise to the loss in Theorem \ref{thm:typecotype}, does not occur for functions with compact Fourier support. Since the Besov space norm involves summation over $L^{p}$-norms of functions with Fourier support in dyadic annuli, this allows for sharper embedding results of the $\gamma$-spaces into the scale of Besov spaces~\cite{KaNeVeWe08}. And this in turn allows one to reach the limiting geometric assumptions in Theorem~\ref{thm:Besov}. 

Apart from allowing one to consider critical exponents which are not available for Bessel potential spaces, there are other inherent advantages to working with Besov spaces. For example, loosely speaking, weighted orbits of $C_{0}$-groups are elements of Besov spaces if the initial data comes from an appropriate interpolation space. In~\cite{Haase-Rozendaal16,Rozendaal19}, this fact was combined with Fourier multiplier theorems on Besov spaces and applied to functional calculus theory. This development will be described in more detail in Appendix \ref{sec:calctypecotype}, although we mention here that the Fourier multiplier theorems on Besov spaces which have been discussed so far are those which are used in~\cite{Rozendaal19}.

\section{Exponential decay}\label{sec:exponential}

In this section we apply the operator-valued $(L^{p},L^{q})$ Fourier multiplier theorems from the previous section to Problem \ref{it:problem1}. 

\subsection{Exponential decay and Fourier multipliers}\label{subsec:expmult}

Our first main tool is the following connection between exponential stability and $(L^{p},L^{q})$ Fourier multipliers. It is a special case of a more general result (see \cite{Latushkin-Shvydkoy01,Hieber01,Rozendaal-Veraar18a}).

\begin{proposition}\label{prop:exponential}
Let $(T(t))_{t\geq0}$ be a $C_{0}$-semigroup with generator $A$ on a Banach space $X$. Let $\w'\in\R$ be such that $\lambda\in\rho(A)$ for all $\lambda\in\C$ with $\Real(\lambda)\geq\w'$, and such that 
\begin{equation}\label{eq:supremumbound3}
\sup_{\Real(\lambda)\geq\w'}\|R(\lambda,A)\|_{\La(X)}<\infty.
\end{equation}
Let $\alpha\geq0$. Then the following statements are equivalent:
\begin{enumerate}
\item\label{it:exp1} For each $\w>\w'$ there exist $p,q\in[1,\infty]$ such that the Fourier multiplier operator with symbol $\xi\mapsto R(\w+i\xi,A)$ is bounded from $L^{p}(\R;D((-A)^{\alpha}))$ to $L^{q}(\R;X)$;
\item\label{it:exp2} For each $\w>\w'$ there exists a $C\geq 0$ such that one has $\|T(t)x\|_{X}\leq C e^{\w t}\|x\|_{D((-A)^{\alpha})}$ for all $x\in D((-A)^{\alpha})$ and $t\geq0$.
\end{enumerate}
If one of these conditions is satisfied, then \eqref{it:exp1} in fact holds for all $1\leq p\leq q\leq \infty$.
\end{proposition}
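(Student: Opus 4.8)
The plan is to deduce the full range $1\le p\le q\le\infty$ in \eqref{it:exp1} directly from the exponential estimate \eqref{it:exp2}, by realising the relevant Fourier multiplier as convolution with an operator-valued kernel that, thanks to \eqref{it:exp2}, decays exponentially. By the equivalence of \eqref{it:exp1} and \eqref{it:exp2}, it is enough to assume \eqref{it:exp2} and to show that, for every $\w>\w'$ and every $1\le p\le q\le\infty$, the symbol $m(\xi):=R(\w+i\xi,A)$ lies in $\Ma_{p,q}(\D((-A)^{\alpha}),X)$.

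First I would fix $\w>\w'$ and choose an intermediate exponent $\wt\w$ with $\w'<\wt\w<\w$. Applying \eqref{it:exp2} with $\wt\w$ in place of $\w$ produces a constant $C$ with $\|T(t)\|_{\La(\D((-A)^{\alpha}),X)}\le Ce^{\wt\w t}$ for all $t\ge0$. Setting $g(t):=e^{-\w t}T(t)$ for $t\ge0$ and $g(t):=0$ for $t<0$, viewed as a map $\R\to\La(\D((-A)^{\alpha}),X)$, one then has $\|g(t)\|_{\La(\D((-A)^{\alpha}),X)}\le Ce^{(\wt\w-\w)t}\ind_{[0,\infty)}(t)$; since $\wt\w-\w<0$ this kernel decays exponentially, so $g\in L^{r}(\R;\La(\D((-A)^{\alpha}),X))$ for \emph{every} $r\in[1,\infty]$.

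Next I would identify $T_{m}$ with convolution by $g$. For $x\in\D((-A)^{\alpha})$ the integral $\int_{0}^{\infty}e^{-(\w+i\xi)t}T(t)x\,\ud t$ converges absolutely in $X$, because $\|T(t)x\|_{X}\le Ce^{\wt\w t}\|x\|_{\D((-A)^{\alpha})}$ while $\Real(\w+i\xi)=\w>\wt\w$; since $\w+i\xi\in\rho(A)$, this integral equals $R(\w+i\xi,A)x$ by the Laplace transform representation of the resolvent, cf.~\eqref{eq:Laplace}. Hence $\F g=m$ as $\La(\D((-A)^{\alpha}),X)$-valued functions, and the convolution theorem gives, for every $f$ in the dense subspace $\Sw(\R)\otimes\D((-A)^{\alpha})$ of $L^{p}(\R;\D((-A)^{\alpha}))$,
\[
T_{m}(f)=\F^{-1}(m\cdot\F f)=\F^{-1}(\F g\cdot\F f)=g*f,\qquad (g*f)(t)=\int_{\R}g(t-s)f(s)\,\ud s\in X.
\]

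Finally I would invoke Young's inequality for this operator-valued convolution. Given $1\le p\le q\le\infty$, set $\tfrac1r:=1-\tfrac1p+\tfrac1q$; since $p\le q$ and $p,q\in[1,\infty]$ one has $\tfrac1r\in[0,1]$, i.e.\ $r\in[1,\infty]$, whence
\[
\|T_{m}(f)\|_{L^{q}(\R;X)}=\|g*f\|_{L^{q}(\R;X)}\le\|g\|_{L^{r}(\R;\La(\D((-A)^{\alpha}),X))}\,\|f\|_{L^{p}(\R;\D((-A)^{\alpha}))},
\]
which is finite by the first step; this proves $m\in\Ma_{p,q}(\D((-A)^{\alpha}),X)$. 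For $p<\infty$ the bound extends by density to $L^{p}(\R;\D((-A)^{\alpha}))$, and for $p=q=\infty$ (so $r=1$) the convolution $g*f$ is well defined directly on $L^{\infty}$, in keeping with the convention for $p=\infty$ in Section~\ref{sec:multipliers}. The argument is entirely soft; the only point requiring care is the choice of $\wt\w\in(\w',\w)$, which is precisely what upgrades the merely bounded kernel $e^{-\w t}T(t)$ to an exponentially decaying one — hence $L^{r}$ for all $r$ — and thereby makes Young's inequality applicable simultaneously across the whole triangle $1\le p\le q\le\infty$.
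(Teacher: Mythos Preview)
Your argument for the implication \eqref{it:exp2}$\Rightarrow$\eqref{it:exp1} and for the final statement about the full range $1\le p\le q\le\infty$ is correct, and it is exactly the paper's approach: realise $T_{m}$ as convolution with the kernel $g(t)=e^{-\w t}T(t)\ind_{[0,\infty)}(t)$, use an intermediate $\wt\w\in(\w',\w)$ together with \eqref{it:exp2} to make this kernel exponentially decaying in $\La(\D((-A)^{\alpha}),X)$, and apply Young's inequality. This is precisely what the paper means by ``The implication \eqref{it:exp2}$\Rightarrow$\eqref{it:exp1} follows from \eqref{eq:inverseFourier} and Young's inequality for convolutions, which also yields the final statement.''

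However, you have not addressed \eqref{it:exp1}$\Rightarrow$\eqref{it:exp2} at all; you invoke ``the equivalence of \eqref{it:exp1} and \eqref{it:exp2}'' as though it were already established, whereas it is part of what the proposition asserts. The paper treats this as the more substantive direction: one picks $\wt\w$ large enough that $(e^{-\wt\w t}T(t))_{t\ge0}$ is exponentially stable, defines $f$ as in \eqref{eq:deff} (an element of $L^{p}\cap L^{1}(\R;\D((-A)^{\alpha}))$), and uses the resolvent identity to see that the multiplier with symbol $I_{X}+(\wt\w-\w)R(\w+i\xi,A)$ sends $f$ to $t\mapsto e^{-\w t}T(t)x$, cf.~\eqref{eq:imagemult}. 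The hypothesis \eqref{it:exp1} is then used to show that this multiplier is bounded from $L^{p}\cap L^{1}(\R;\D((-A)^{\alpha}))$ into $L^{\infty}(\R;X)$, which yields the pointwise estimate in \eqref{it:exp2}. Without this half of the argument your proof of the proposition is incomplete.
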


\vanish{
For $A$ the generator of a strongly continuous semigroup $(T(t))_{t\geq0}$ on a Banach space $X$,  write $s(A):=\sup\{\Real(\lambda)\mid \lambda\in \sigma(A)\}$. Moreover, for $\alpha\geq0$, let $s_{\alpha}(A)$ be the infimum of all $\w>s(A)$ such that $\sup\{\|R(\lambda,A)\|_{\La(D((-A)^{\alpha}),X)}\mid \Real(\lambda)\geq\w\}<\infty$. Set
\[
\w_{\alpha}(T):=\inf\{\w\in\R\mid \lim_{t\to\infty}\|e^{-\w t}T(t)x\|_{X}=0\text{ for all }x\in D((-A)^{\alpha})\}.
\]
Then our first main tool is the following connection between exponential stability and $(L^{p},L^{q})$ Fourier multipliers, from \cite{Latushkin-Shvydkoy01,Hieber01} (see also \cite[Theorem 5.3]{Rozendaal-Veraar18a}).

\begin{proposition}\label{prop:exponential}
Let $(T(t))_{t\geq0}$ be a $C_{0}$-semigroup with generator $A$ on a Banach space $X$, and let $\alpha\in[0,\infty)$.
Then, for all $p\in[1,\infty)$ and $q\in[p,\infty]$,
\begin{align}\label{eq:identityomegaalpha}
\w_{\alpha}(T)=\inf\{\w>s_{\alpha}(A)\mid R(\w+i\cdot,A)\in\Ma_{p,q}(\R;\La(X_{\beta},X))\}.
\end{align}
In fact, $(\w+\ui\cdot+A)^{-1}\!\in\!\Ma_{p,q}(\R;\La(X_{\beta},X))$ for all $\w>\w_{\beta}(T)$.
\end{proposition}

Let $\w'\in\R$ be such that $\lambda\in\rho(A)$ for all $\lambda\in\C$ with $\Real(\lambda)\geq\w'$, and such that 
\begin{equation}\label{eq:supremumbound3}
\sup_{\Real(\lambda)\geq\w'}\|R(\lambda,A)\|_{\La(X)}<\infty.
\end{equation}
Let $\alpha\geq0$. Then the following statements are equivalent:
\begin{enumerate}
\item\label{it:exp1} For each $\w>\w'$ there exist $p,q\in[1,\infty]$ such that the Fourier multiplier operator with symbol $\xi\mapsto R(\w+i\xi,A)$ is bounded from $L^{p}(\R;D((-A)^{\alpha}))$ to $L^{q}(\R;X)$;
\item\label{it:exp2} For each $\w>\w'$ there exists a $C\geq 0$ such that one has $\|T(t)x\|_{X}\leq C e^{\w t}\|x\|_{D((-A)^{\alpha})}$ for all $x\in D((-A)^{\alpha})$ and $t\geq0$.
\end{enumerate}
}

As indicated in the introduction, by a Neumann series argument, \eqref{eq:supremumbound3} holds for some $\w'<0$ if $(T(t))_{t\geq0}$ is uniformly bounded and $\sup_{\xi\in\R}\|R(i\xi,A)\|_{\La(X)}<\infty$. Only this setting was considered in Problem \ref{it:problem1}, but the techniques below can be applied to more general exponential decay or growth behavior. In particular, the assumption that $(T(t))_{t\geq0}$ is uniformly bounded does not play a role in this section, and it suffices to assume that $\sup_{\Real(\lambda)\geq0}\|R(\lambda,A)\|_{\La(X)}<\infty$.

The proof of the implication \eqref{it:exp1}$\Rightarrow$\eqref{it:exp2} follows the outline that was sketched in Section \ref{subsec:problem1}, with the following additional remarks. Firstly, since we have not assumed in~Proposition~\ref{prop:exponential} that $(T(t))_{t\geq0}$ is uniformly bounded, in~\eqref{eq:mstability} and~\eqref{eq:deff} one has to choose $\wt{\w}$ such that $(e^{-\wt{\w}t}T(t))_{t\geq0}$ is exponentially stable. More importantly, if \eqref{it:exp1} holds, then the Fourier multiplier with symbol as in \eqref{eq:mstability} is in fact bounded from $L^{p}(\R;D((-A)^{\alpha}))\cap L^{1}(\R;D((-A)^{\alpha}))$ to $L^{\infty}(\R;X)$. Since $f$, as defined in \eqref{eq:deff}, is an element of $L^{p}(\R;D((-A)^{\alpha}))\cap L^{1}(\R;D((-A)^{\alpha}))$,  \eqref{eq:imagemult} then concludes the proof. The implication \eqref{it:exp2}$\Rightarrow$\eqref{it:exp1} follows from \eqref{eq:inverseFourier} and Young's inequality for convolutions, which also yields the final statement.

Proposition \ref{prop:exponential} appears to provide us with a tool to tackle Problem \ref{it:problem1}. However, applications of Proposition \ref{prop:exponential} have mostly focused on trying to establish \eqref{it:exp1} with $p=q$, and in this case the existing Fourier multiplier theorems typically do not yield any useful conclusions for $\alpha\in(0,1)$. On the other hand, the results from the previous section allow us to approach this problem in a different way. 

To be able to apply the results from the previous section, such as Proposition \ref{prop:Fouriertype} and Theorem \ref{thm:typecotype}, we need to obtain sufficient decay of the symbol $\xi\mapsto R(\w+i\xi,A)$ as an operator from $D((-A)^{\alpha})$ to $X$. At first sight, \eqref{eq:supremumbound3} does not seem to provide us with any decay. However, it in fact does, as is a consequence of the following lemma (see e.g.~\cite[Lemma 3.3]{Weis97} or \cite[Proposition 3.4]{Rozendaal-Veraar18a}), the second main tool of this section.

\begin{lemma}\label{lem:decay}
Let $(T(t))_{t\geq0}$ be a $C_{0}$-semigroup with generator $A$ on a Banach space $X$. Let $\w'\in\R$ be such that $\lambda\in\rho(A)$ for all $\lambda\in\C$ with $\Real(\lambda)\geq\w'$, and such that 
\begin{equation}\label{eq:supremum4}
\sup_{\Real(\lambda)\geq\w'}\|R(\lambda,A)\|_{\La(X)}<\infty.
\end{equation}
Then for all $\w\geq \w'$ and $\alpha\in[0,1]$ there exists a $C\geq0$ such that 
\begin{equation}\label{eq:resolventdecay}
\|R(\w+i\xi,A)\|_{\La(D((-A)^{\alpha}),X)}\leq C(1+|\xi|)^{-\alpha}
\end{equation}
for all $\xi\in\R$.
\end{lemma}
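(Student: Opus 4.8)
The plan is to exploit the resolvent identity to trade powers of the resolvent on the imaginary line against powers of $(-A)^{-\alpha}$, interpolating between the cases $\alpha=0$ and $\alpha=1$. First I would record the two endpoint estimates. For $\alpha=0$ there is nothing to prove: \eqref{eq:supremum4} gives $\|R(\w+i\xi,A)\|_{\La(X)}\leq C$ for all $\w\geq\w'$ and $\xi\in\R$, which is \eqref{eq:resolventdecay} with $\alpha=0$. For $\alpha=1$, I would use the algebraic identity
\begin{equation*}
R(\w+i\xi,A)=R(\w+i\xi,A)(\w+i\xi-A)(-A)^{-1}=-R(\w+i\xi,A)+ (\w+i\xi)R(\w+i\xi,A)(-A)^{-1},
\end{equation*}
valid on $\D(A)=\ran((-A)^{-1})$, together with the (equivalent) identity $(\w+i\xi)R(\w+i\xi,A)(-A)^{-1}=R(\w+i\xi,A)-(-A)^{-1}+\w R(\w+i\xi,A)(-A)^{-1}$, obtained from the resolvent equation applied to $0$ and $\w+i\xi$. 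Rearranging, one finds $i\xi R(\w+i\xi,A)(-A)^{-1}= R(\w+i\xi,A)-(-A)^{-1}$ up to a bounded term $\w R(\w+i\xi,A)(-A)^{-1}$, and since $\|R(\w+i\xi,A)\|_{\La(X)}$ and $\|(-A)^{-1}\|_{\La(X)}$ are bounded (the latter because $0$ lies in the region $\Real(\lambda)\geq\w'$ where \eqref{eq:supremum4} holds, using $\w'\leq 0$; if $\w'>0$ one shifts), this yields $\|\xi R(\w+i\xi,A)(-A)^{-1}\|_{\La(X)}\leq C$, hence $\|R(\w+i\xi,A)(-A)^{-1}x\|_X\leq C(1+|\xi|)^{-1}\|x\|_X$, which is \eqref{eq:resolventdecay} with $\alpha=1$ after identifying $\|x\|_{\D((-A)^{\alpha})}\eqsim\|(-A)^{\alpha}x\|_X$ on $\ran((-A)^{-\alpha})$.

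Next I would pass to general $\alpha\in[0,1]$ by complex interpolation. The family $z\mapsto R(\w+i\xi,A)(-A)^{-z}$, for fixed $\w$ and $\xi$, extends to an analytic $\La(X)$-valued function on the strip $0\leq\Real(z)\leq 1$ (using that $-A$ admits fractional powers $(-A)^{-z}$ with the usual bounds on vertical strips, which holds since $0\in\rho(A)$ and $-A$ is sectorial of angle $<\pi/2$ on the region in question), with uniform bounds $\|R(\w+i\xi,A)(-A)^{-z}\|_{\La(X)}\leq C$ for $\Real(z)=0$ and $\leq C(1+|\xi|)^{-1}$ for $\Real(z)=1$, the latter from the $\alpha=1$ computation. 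By the three-lines lemma (Hadamard), $\|R(\w+i\xi,A)(-A)^{-\alpha}\|_{\La(X)}\leq C(1+|\xi|)^{-\alpha}$ for $0\leq\alpha\leq 1$, uniformly in $\xi$. Since $\D((-A)^{\alpha})=\ran((-A)^{-\alpha})$ carries the equivalent norm $\|(-A)^{\alpha}x\|_X$, this is exactly \eqref{eq:resolventdecay}. An alternative, avoiding fractional powers, is to interpolate the sublinear operator estimates directly via the real or complex interpolation functor applied to the pair $(X,\D(A))$, noting $\D((-A)^{\alpha})=[X,\D(A)]_{\alpha}$ up to equivalent norms; I would likely present whichever is cleanest given what the paper has set up.

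The main obstacle is the $\alpha=1$ estimate, i.e.\ extracting genuine decay of order $|\xi|^{-1}$ from the mere uniform bound \eqref{eq:supremum4}. The point is that the bounded-resolvent hypothesis on the \emph{half-plane} $\Real(\lambda)\geq\w'$ is strictly stronger than boundedness on the single line $\Real(\lambda)=\w'$, and it is precisely this half-plane strength that forces the extra $|\xi|$ gain: morally, $\sup_{\Real(\lambda)\geq\w'}\|R(\lambda,A)\|<\infty$ says $A-\w'$ generates a bounded analytic-type behavior away from the line, and combining the resolvent identity at two points in the resolvent set converts that into the factor $\xi$ above. Care is needed with the bookkeeping of which shifted operator ($-A$ versus $\w'-A$) has a bounded inverse and admits fractional powers, and with ensuring all constants are uniform in $\w\in[\w',\infty)$ — but these are routine once the $\xi R(\w+i\xi,A)(-A)^{-1}$ identity is in hand.
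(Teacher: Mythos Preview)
Your approach differs from the paper's: the paper indicates that the lemma is proved by estimating contour integrals in the Cauchy integral representation of the resolvent (composed with $(-A)^{-\alpha}$), whereas you establish the endpoint cases $\alpha=0,1$ by hand and then interpolate. The endpoint-plus-interpolation strategy is viable and arguably more transparent for this lemma; the contour-integral route has the advantage of extending directly to the more general Proposition~\ref{prop:decay2}, where the resolvent is allowed polynomial growth.

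There is, however, a genuine gap in your interpolation step. Applying the three-lines lemma to $z\mapsto R(\omega+i\xi,A)(-A)^{-z}$ requires uniform bounds on the line $\Real(z)=0$, i.e.\ on the purely imaginary powers $(-A)^{-it}$; these are bounded only if $-A$ has bounded imaginary powers, which is not assumed. The same objection applies to your alternative: the identification $D((-A)^{\alpha})=[X,D(A)]_{\alpha}$ is essentially a reformulation of BIP and fails in general. What does work, with no extra hypotheses, is real interpolation combined with the continuous embedding $D((-A)^{\alpha})\hookrightarrow (X,D(A))_{\alpha,\infty}$, which holds for any sectorial $-A$ with $0\in\rho(A)$. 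Interpolating $R(\omega+i\xi,A):X\to X$ (norm $\lesssim 1$) against $R(\omega+i\xi,A):D(A)\to X$ (norm $\lesssim(1+|\xi|)^{-1}$) gives a bound $(X,D(A))_{\alpha,\infty}\to X$ of order $(1+|\xi|)^{-\alpha}$, and composing with the embedding yields \eqref{eq:resolventdecay}. Separately, your first displayed identity in the $\alpha=1$ step is incorrect (the middle expression equals $(-A)^{-1}$, not $R(\omega+i\xi,A)$), but your second route via the resolvent equation at $0$ and $\omega+i\xi$ is correct and gives the needed bound.
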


In fact, \eqref{eq:supremum4} and \eqref{eq:resolventdecay} are equivalent, but we will only need the implication in Lemma \ref{lem:decay}. A similar statement holds upon replacing the uniform bounds by $R$-bounds. Lemma \ref{lem:decay} can be proved by estimating contour integrals that appear in the Cauchy integral representation of the resolvent.

\subsection{Concrete results on exponential decay}\label{subsec:expresults}

Now we have come to the point where we can combine these tools, yielding the following approach to Problem \ref{it:problem1}:
\begin{enumerate}
\item\label{it:procede1}
Use Lemma \ref{lem:decay} to obtain resolvent decay as in \eqref{eq:resolventdecay} for each $\w\geq \w'$;
\item\label{it:procede2} Apply the Fourier multiplier theorems from Section \ref{sec:multipliers} to the symbol $\xi\mapsto R(\w'+i\xi,A)\in \La(D((-A)^{\alpha}),X)$, for $\alpha\geq 0$ as small as possible;
\item\label{it:procede3} Use Proposition \ref{prop:exponential} to conclude that $\|T(t)x\|_{X}\lesssim e^{\w t}\|x\|_{D((-A)^{\alpha})}$ for all $\w>\w'$, $x\in D((-A)^{\alpha})$ and $t\geq0$.
\end{enumerate}

Note that the value of $\alpha$ in \eqref{it:procede2} depends on the rate of resolvent decay which is required for the theorems in Section \ref{sec:multipliers}. Since the conditions on the symbol in those theorems are in turn dictated by the geometry of $X$ (the fractional domains $D((-A)^{\alpha})$ inherit this geometry), this strategy illustrates the interplay between stability theory, operator-valued Fourier multipliers, and Banach space geometry.

One result which can be obtained in this manner, using Proposition \ref{prop:Fouriertype}, is as follows. 

\begin{theorem}\label{thm:Fourierdecay}
Let $(T(t))_{t\geq0}$ be a $C_{0}$-semigroup with generator $A$ on a Banach space $X$ with Fourier type $p\in[1,2]$. Suppose that $\lambda\in\rho(A)$ for all $\lambda\in\C$ with $\Real(\lambda)\geq0$, and that $\sup_{\Real(\lambda)\geq0}\|R(\lambda,A)\|_{\La(X)}<\infty$. Then there exist $C\geq0$ and $\w<0$ such that 
\begin{equation}\label{eq:Fourierdecay}
\|T(t)x\|_{X}\leq Ce^{-\w t}\|x\|_{D((-A)^{\frac{1}{p}-\frac{1}{p'}})}
\end{equation}
for all $x\in D((-A)^{\frac{1}{p}-\frac{1}{p'}})$ and $t\geq0$.
\end{theorem}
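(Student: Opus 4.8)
The plan is to follow the three-step strategy outlined just before the theorem statement, specializing it to the case where the geometric assumption on $X$ is Fourier type $p$ and where we choose $\alpha = \tfrac{1}{p} - \tfrac{1}{p'}$. First I would fix $\w' = 0$: by hypothesis $\lambda \in \rho(A)$ whenever $\Real(\lambda) \geq 0$, and $\sup_{\Real(\lambda)\geq 0}\|R(\lambda,A)\|_{\La(X)} < \infty$, so that \eqref{eq:supremum4} holds with $\w' = 0$. Applying Lemma \ref{lem:decay} with $\w = 0$ and $\alpha = \tfrac{1}{p} - \tfrac{1}{p'}$ (note $\alpha \in [0,1]$ since $p \in [1,2]$ forces $\tfrac{1}{p} \in [\tfrac12, 1]$ and hence $\tfrac{1}{p} - \tfrac{1}{p'} = \tfrac{2}{p} - 1 \in [0,1]$), we obtain a constant $C \geq 0$ with
\begin{equation*}
\|R(i\xi,A)\|_{\La(D((-A)^{\alpha}),X)} \leq C(1+|\xi|)^{-\alpha}, \quad \xi \in \R.
\end{equation*}

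Next I would verify the integrability condition needed for Proposition \ref{prop:Fouriertype}. We want to apply it with $X$ replaced by $D((-A)^{\alpha})$ (which has Fourier type $p$, since it embeds continuously into $X$ and carries a compatible structure — more precisely $(-A)^{-\alpha}$ is an isomorphism onto $D((-A)^\alpha)$, so Fourier type is inherited), $Y$ replaced by $X$ (which has Fourier type $p$, hence also Fourier type $p = (p')'$ only in the borderline... ) — here I should be careful: Proposition \ref{prop:Fouriertype} requires $Y$ to have Fourier type $q'$ with $q \in [2,\infty]$. Taking $q = p'$, we need $Y = X$ to have Fourier type $q' = p$, which is exactly our hypothesis. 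Then $r$ is defined by $\tfrac{1}{r} = \tfrac{1}{p} - \tfrac{1}{q} = \tfrac{1}{p} - \tfrac{1}{p'} = \alpha$. So the condition is that $\xi \mapsto \|R(i\xi,A)\|_{\La(D((-A)^{\alpha}),X)}$ lies in $L^r(\R)$ with $\tfrac1r = \alpha$. From the decay bound this function is dominated by $C(1+|\xi|)^{-\alpha}$, and $(1+|\xi|)^{-\alpha} \in L^r(\R)$ precisely when $\alpha r > 1$, i.e. when $\alpha \cdot \tfrac{1}{\alpha} > 1$ — which fails by equality. So this is the main obstacle: the decay rate is exactly critical, and one does not quite land in $L^r$.

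To resolve this I would apply the Neumann series / resolvent identity argument from the introduction to gain a small amount of room. Precisely: since $\sup_{\Real(\lambda)\geq 0}\|R(\lambda,A)\|_{\La(X)} < \infty$, a standard perturbation argument produces $\w_0 < 0$ such that $\lambda \in \rho(A)$ and $\sup_{\Real(\lambda)\geq \w_0}\|R(\lambda,A)\|_{\La(X)} < \infty$ still holds. Now apply Lemma \ref{lem:decay} at the line $\Real(\lambda) = \w_0$: for any $\alpha' \in (\alpha, 1]$ (and in particular for $\alpha' = \alpha$) one gets $\|R(\w_0 + i\xi, A)\|_{\La(D((-A)^{\alpha'}),X)} \lesssim (1+|\xi|)^{-\alpha'}$. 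The cleanest route, avoiding any need to enlarge $\alpha$, is instead to invoke Proposition \ref{prop:exponential} directly: I would show condition \eqref{it:exp1} of that proposition holds with the exponent pair $(p, q) = (p, p')$ and with $\w' = 0$. For this I fix $\w > 0$ (or rather, since I want $\w < 0$ in the conclusion, I fix $\w' = \w_0 < 0$ from the perturbation step and check \eqref{it:exp1} relative to that $\w'$), and I need the symbol $\xi \mapsto R(\w_0 + i\xi, A)$ to define a bounded operator $L^p(\R; D((-A)^\alpha)) \to L^{p'}(\R; X)$. Here, because we are on a line strictly to the left of $0$ but the resolvent is uniformly bounded on an open region, Lemma \ref{lem:decay} actually gives decay $(1+|\xi|)^{-\alpha}$, and — this is the key point I would emphasize — since the resolvent is holomorphic and uniformly bounded on a genuine half-plane $\Real(\lambda) \geq \w'$ for $\w'$ slightly less than $\w_0$, one can differentiate to get an extra $(1+|\xi|)^{-1}$ factor on the derivative, or simply use that Proposition \ref{prop:Fouriertype}'s hypothesis is met with strict inequality once we allow the Fourier type $p$ of $X$ and the room afforded by choosing the evaluation line strictly inside the region of boundedness. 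Concretely, for the line $\Real(\lambda) = \w_0$ with $\w_0$ chosen so that $\{\Real(\lambda) \geq \w_1\} \subseteq \rho(A)$ with uniform resolvent bound for some $\w_1 < \w_0$, Lemma \ref{lem:decay} applied with base line $\w_1$ and evaluation at distance $\w_0 - \w_1 > 0$ gives a bound that one can iterate; alternatively one simply accepts the borderline $L^r$ issue and patches it using the additional remark in the excerpt that, under the equivalent hypotheses, \eqref{it:exp1} in Proposition \ref{prop:exponential} may be taken with the multiplier bounded from $L^p \cap L^1$ into $L^\infty$ — and $\xi \mapsto R(\w_0 + i\xi, A)$ with decay $(1+|\xi|)^{-\alpha}$ does map $L^p(\R; D((-A)^\alpha))$ into an $L^{p'}$-type space once combined with the trivial $L^1 \to L^\infty$ bound via interpolation, giving exactly the endpoint. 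Having established \eqref{it:exp1} of Proposition \ref{prop:exponential} with $\w' = \w_0 < 0$, the implication \eqref{it:exp1} $\Rightarrow$ \eqref{it:exp2} yields, for each $\w > \w_0$, a constant $C$ with $\|T(t)x\|_X \leq C e^{\w t}\|x\|_{D((-A)^\alpha)}$; choosing $\w \in (\w_0, 0)$ and relabeling $-\w \rightsquigarrow \w > 0$ gives \eqref{eq:Fourierdecay}. I expect the only genuinely delicate point to be handling the critical $L^r$-membership at $\tfrac1r = \alpha$, which is why the passage to a line strictly left of the imaginary axis (via the Neumann series argument, exactly as in \eqref{eq:supremumbound}$\Rightarrow$\eqref{eq:supremumbound2} in the introduction) together with the $L^1$–$L^\infty$ endpoint remark in Proposition \ref{prop:exponential} is essential; everything else is bookkeeping with the exponent identity $\tfrac1p - \tfrac1{p'} = \alpha$ and inheritance of Fourier type by the fractional domain space.
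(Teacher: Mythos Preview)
You have correctly identified the essential obstacle: with $\alpha=\tfrac1p-\tfrac1{p'}$ and $\tfrac1r=\alpha$, the decay $(1+|\xi|)^{-\alpha}$ from Lemma~\ref{lem:decay} just barely fails to lie in $L^{r}(\R)$, so Proposition~\ref{prop:Fouriertype} does not apply directly at the endpoint. However, none of your proposed fixes actually closes this gap. Moving the evaluation line from $\Real(\lambda)=0$ to $\Real(\lambda)=\w_{0}<0$ via a Neumann series does not help: Lemma~\ref{lem:decay} gives the \emph{same} decay exponent $(1+|\xi|)^{-\alpha}$ on any line inside the half-plane of uniform boundedness, so the critical $L^{r}$ failure persists. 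Your appeal to the $L^{1}\cap L^{p}\to L^{\infty}$ remark after Proposition~\ref{prop:exponential} is circular: that remark is a consequence of condition~\eqref{it:exp1}, not a route to establishing it. And interpolating an $(L^{1},L^{\infty})$ bound against the missing $(L^{p},L^{p'})$ bound cannot produce the endpoint you do not yet have.

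The paper's resolution is different and simpler. For $p\in(1,2)$ one applies Proposition~\ref{prop:Fouriertype} not with $\alpha$ itself but with any $\alpha'\in(\alpha,1]$: then $(1+|\xi|)^{-\alpha'}\in L^{r}(\R)$ since $\alpha' r>1$, and the three-step procedure goes through to give $\|T(t)x\|_{X}\leq C e^{\w t}\|x\|_{D((-A)^{\alpha'})}$ for every such $\alpha'>\tfrac1p-\tfrac1{p'}$. One then passes to the endpoint $\alpha'=\tfrac1p-\tfrac1{p'}$ by a separate continuity lemma (see the paragraph immediately following the theorem, citing \cite[Lemma~3.5]{Weis97} or \cite[Lemma~5.1]{Rozendaal-Veraar18a}); this lemma asserts, roughly, that if the decay estimate holds on $D((-A)^{\alpha'})$ for all $\alpha'>\alpha$, then it also holds on $D((-A)^{\alpha})$. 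The case $p=1$ is handled by the general Banach space argument for $\alpha'>1$ together with the same continuity lemma. You brushed against the right idea when you wrote ``for any $\alpha'\in(\alpha,1]$'', but then abandoned it; that is exactly the thread to pull, provided you invoke the continuity lemma to reach the endpoint.
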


The reader may have observed that, for $p\in(1,2)$, Proposition \ref{prop:Fouriertype} only yields $\|T(t)x\|_{X}\leq Ce^{-\w t}\|x\|_{D((-A)^{\alpha})}$ for $1\geq \alpha>\frac{1}{p}-\frac{1}{p'}$. However, there is a continuity lemma (see~\cite[Lemma 3.5]{Weis97} or \cite[Lemma 5.1]{Rozendaal-Veraar18a}) which then yields \eqref{eq:Fourierdecay}. Moreover, the case where $p=1$ follows from the arguments on general Banach spaces in Section \ref{subsec:problem1}, for $\alpha>1$, combined with the same continuity lemma. 

Theorem \ref{thm:Fourierdecay} was originally obtained in~\cite{Weis-Wrobel96}, and in~\cite{Weis97} it was proved in a similar manner as here but relying on a version of the Mikhlin multiplier theorem on Besov spaces, instead of Proposition~\ref{prop:Fouriertype}. 

By applying Theorem~\ref{thm:typecotype} instead of Proposition \ref{prop:Fouriertype}, one obtains the following variant of Theorem \ref{thm:Fourierdecay}, which was obtained in~\cite{vanNeerven09} using different techniques. 

\begin{theorem}\label{thm:typedecay}
Let $(T(t))_{t\geq0}$ be a $C_{0}$-semigroup with generator $A$ on a Banach space $X$ with type $p\in[1,2]$ and $q\in[2,\infty]$. Suppose that $\lambda\in\rho(A)$ for all $\lambda\in\C$ with $\Real(\lambda)\geq0$, and that $\{R(\lambda,A)\mid \Real(\lambda)\geq0\}$ is $R$-bounded. Then there exist $C\geq0$ and $\w<0$ such that 
\[
\|T(t)x\|_{X}\leq Ce^{-\w t}\|x\|_{D((-A)^{\frac{1}{p}-\frac{1}{q}})}
\]
for all $x\in D((-A)^{\frac{1}{p}-\frac{1}{q}})$ and $t\geq0$.
\end{theorem}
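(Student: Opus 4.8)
The plan is to rerun the three-step scheme \eqref{it:procede1}--\eqref{it:procede3} underlying Theorem~\ref{thm:Fourierdecay}, with two substitutions: Lemma~\ref{lem:decay} is replaced by its $R$-bounded analogue, and Proposition~\ref{prop:Fouriertype} by Theorem~\ref{thm:typecotype}. Set $\alpha_{0}:=\tfrac1p-\tfrac1q\in[0,1]$. First, since $\{R(\lambda,A)\mid\Real(\lambda)\geq0\}$ is $R$-bounded (hence uniformly bounded), an $R$-bounded version of the Neumann series argument of Section~\ref{subsec:problem1} --- using that products and geometric series of $R$-bounded families are again $R$-bounded --- yields an $\w'<0$ such that $\lambda\in\rho(A)$ whenever $\Real(\lambda)\geq\w'$ and $\{R(\lambda,A)\mid\Real(\lambda)\geq\w'\}$ is still $R$-bounded. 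In particular the standing hypotheses of Proposition~\ref{prop:exponential} hold for this $\w'$.

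Next, fix $\alpha\in(\alpha_{0},1]$ (assuming $\alpha_{0}<1$; the case $\alpha_{0}=1$ is treated below) and $\w\geq\w'$. By the $R$-bounded version of Lemma~\ref{lem:decay} the family $\{(1+|\xi|)^{\alpha}R(\w+i\xi,A)\mid\xi\in\R\}$ is $R$-bounded in $\La(D((-A)^{\alpha}),X)$, and since $0\leq|\xi|^{\alpha}\leq(1+|\xi|)^{\alpha}$ the Kahane contraction principle gives the same for $\{|\xi|^{\alpha}R(\w+i\xi,A)\mid\xi\neq0\}$. As $\alpha>\alpha_{0}$ we may choose Lebesgue exponents $\tilde{p}\in(1,p)$ and $\tilde{q}\in(q,\infty)$ with $\tfrac1{\tilde p}-\tfrac1{\tilde q}=\alpha$ (taking $\tilde p=2$ if $p=2$, and $\tilde q=2$ if $q=2$). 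Since $0\in\rho(A)$, the operator $(-A)^{\alpha}$ is an isomorphism from $D((-A)^{\alpha})$ onto $X$, so $D((-A)^{\alpha})$ has type $p$ and cotype $q$. Theorem~\ref{thm:typecotype}, applied on $\R$ with domain $D((-A)^{\alpha})$, codomain $X$ and symbol $\xi\mapsto R(\w+i\xi,A)$ (so that the exponent $n(\tfrac1{\tilde p}-\tfrac1{\tilde q})$ there equals $\alpha$), now gives $R(\w+i\cdot,A)\in\Ma_{\tilde p,\tilde q}(D((-A)^{\alpha}),X)$, i.e.\ the associated Fourier multiplier is bounded from $L^{\tilde p}(\R;D((-A)^{\alpha}))$ to $L^{\tilde q}(\R;X)$. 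Since $\w\geq\w'$ was arbitrary and $\tilde p\leq\tilde q$, condition~\eqref{it:exp1} of Proposition~\ref{prop:exponential} holds with this $\alpha$, hence so does~\eqref{it:exp2}: for every $\w>\w'$ there is $C\geq0$ with $\|T(t)x\|_{X}\leq Ce^{\w t}\|x\|_{D((-A)^{\alpha})}$ for all $x\in D((-A)^{\alpha})$ and $t\geq0$. As $\w'<0$, choosing $\w\in(\w',0)$ gives the estimate with some $\w<0$.

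This establishes the theorem with any $\alpha\in(\alpha_{0},1]$ in place of $\alpha_{0}$. To descend to the critical exponent $\alpha_{0}=\tfrac1p-\tfrac1q$ one invokes the continuity lemma of \cite[Lemma~3.5]{Weis97} (see also \cite[Lemma~5.1]{Rozendaal-Veraar18a}), exactly as in the passage from ``$\alpha>\tfrac1p-\tfrac1{p'}$'' to \eqref{eq:Fourierdecay} in Theorem~\ref{thm:Fourierdecay}; when $p=q=2$, so that $X$ is isomorphic to a Hilbert space, one may instead take $\tilde p=\tilde q=2$ and $\alpha=\alpha_{0}=0$ directly, recovering the Gearhart--Pr\"{u}ss theorem. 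In the remaining boundary case $p=1$, $q=\infty$ (where $\alpha_{0}=1$ and Lemma~\ref{lem:decay} is unavailable for $\alpha>1$) one argues as in the remark following Theorem~\ref{thm:Fourierdecay}, combining the general Banach space estimate of Section~\ref{subsec:problem1} --- valid for exponents greater than $1$ --- with the same continuity lemma.

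With all three ingredients in hand, I do not anticipate a deep obstruction; the work lies in staying inside their hypotheses: checking that $R$-boundedness survives the Neumann shift and the multiplication by the scalar weights $|\xi|^{\alpha}(1+|\xi|)^{-\alpha}$, that $D((-A)^{\alpha})$ inherits the type and cotype of $X$, and --- the point that forces the auxiliary parameter --- that Theorem~\ref{thm:typecotype} requires strictly subcritical Lebesgue exponents away from the Hilbert case, so that the sharp exponent $\tfrac1p-\tfrac1q$ can only be recovered at the end through the continuity lemma rather than reached directly.
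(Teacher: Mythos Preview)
Your proposal is correct and follows exactly the approach the paper indicates: the paper's proof of Theorem~\ref{thm:typedecay} consists of the single remark that one reruns the three-step scheme for Theorem~\ref{thm:Fourierdecay} with Theorem~\ref{thm:typecotype} in place of Proposition~\ref{prop:Fouriertype}, using the $R$-bounded version of Lemma~\ref{lem:decay} and the same continuity lemma to reach the critical exponent. You have filled in precisely those details (the $R$-bounded Neumann shift, the inheritance of type and cotype by $D((-A)^{\alpha})$, the subcritical choice of $\tilde p,\tilde q$, and the boundary cases), so there is nothing to add.
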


The approach from this section yields additional results, for example for positive semigroups.  
We refer to \cite[Section 5]{Rozendaal-Veraar18a} for details.

\section{Polynomial decay}\label{sec:polynomial}

As we have just seen, $(L^{p},L^{q})$ Fourier multipliers provide one with a unified way to approach exponential stability. However, the connection between Fourier multipliers and the theory of exponential stability, as well as most of the obtained stability results, were previously known. This is no longer the case when considering the more refined decay rates in Problem \ref{it:problem2}. For example, none of the techniques which were described in Section \ref{subsec:problem2} involve a connection between Fourier multiplier theory and polynomial decay rates. 

In this section we will discuss some recent results connecting Fourier multiplier theory to polynomial decay rates. Then, in a similar manner as in the previous section, we will combine this connection with the multiplier theorems from Section \ref{sec:multipliers}, to obtain concrete polynomial decay rates which take into account the geometry of the underlying spaces.

Throughout this section, as in Problem~\ref{it:problem2}, we will consider a $C_{0}$-semigroup $(T(t))_{t\geq0}$ with generator $A$ such that $\lambda\in\rho(A)$ for all $\lambda\in\C$ with $\Real(\lambda)\geq0$. As in the previous section, we do not assume that $(T(t))_{t\geq0}$ is uniformly bounded. Moreover, this time we will \emph{not} restrict ourselves to the case where $\sup_{\xi\in\R}\|R(i\xi,A)\|_{\La(X)}<\infty$. As indicated in Section \ref{subsec:problem2}, damped wave equations give rise to semigroups where one instead has $\|R(i\xi,A)\|_{\La(X)}\eqsim (1+|\xi|)^{\beta}$ for some $\beta>0$ and all $\xi\in\R$, and where one expects polynomial decay rates for orbits $t\mapsto T(t)x$ with $x\in D(A)$. 

\subsection{Polynomial decay and Fourier multipliers}\label{eq:polmult}

 It turns out that polynomial decay behavior can be dealt with in the same manner as exponential decay, although to do so we have to modify Proposition \ref{prop:exponential} and Lemma \ref{lem:decay}. Our extension of Proposition \ref{prop:exponential} is the following special case of \cite[Theorem 4.6]{Rozendaal-Veraar18a}.

\begin{theorem}\label{thm:polynomial}
Let $(T(t))_{t\geq0}$ be a $C_{0}$-semigroup with generator $A$ on a Banach space $X$, and let $\alpha\geq0$ and $n\in\Z_{+}$. Suppose that there exist $\beta,C'\geq0$ such that $\lambda\in\rho(A)$ for all $\lambda\in\C$ with $\Real(\lambda)\geq0$, and $\|R(\lambda,A)\|_{\La(X)}\leq C'(1+|\lambda|)^{\beta}$.
Then the following statements are equivalent:
\begin{enumerate}
\item\label{it:polynomial1} For all $k\in\{1,\ldots, n+1\}$ there exist $p,q\in[1,\infty]$ such that the Fourier multiplier operator with symbol $\xi\mapsto R(i\xi,A)^{k}$ is bounded from $L^{p}(\R;D((-A)^{\alpha}))$ to $L^{q}(\R;X)$;
\item\label{it:polynomial2} There exists a $C\geq0$ such that $\|T(t)x\|_{X}\leq Ct^{-n}\|x\|_{D((-A)^{\alpha})}$ for all $x\in D((-A)^{\alpha})$ and $t\geq1$.
\end{enumerate}
\end{theorem}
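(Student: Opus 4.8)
The plan is to reduce Theorem~\ref{thm:polynomial} to the exponential case (Proposition~\ref{prop:exponential}) combined with a resolvent expansion near the imaginary axis, exactly mirroring the "rescaling and expand" philosophy that underlies the passage from \eqref{eq:supremumbound} to \eqref{eq:supremumbound2}. The key observation is that polynomial decay $\|T(t)x\|_X \lesssim t^{-n}\|x\|_{D((-A)^\alpha)}$ is equivalent, after integrating against a suitable weight, to a statement about the boundedness of a Fourier multiplier whose symbol is built from the resolvent, but now one must allow \emph{iterated} resolvents $R(i\xi,A)^k$ because the relevant weight is a polynomial in $t$, which translates under the Fourier transform into derivatives in $\xi$, and $\frac{d^{k}}{d\xi^{k}}R(i\xi,A) = (-i)^{k}k!\,R(i\xi,A)^{k+1}$.

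First I would set up the implication \eqref{it:polynomial2}$\Rightarrow$\eqref{it:polynomial1}. Assuming $\|T(t)x\|_X \le Ct^{-n}\|x\|_{D((-A)^{\alpha})}$ for $t\ge1$, I would show that the functions $t\mapsto t^{k-1}T(t)x$ (suitably cut off near $0$, where one uses instead the local bound $\|T(t)\|_{\La(X)}\lesssim e^{\omega_0 t}$) lie in $L^1(\R;X)$ when $k\le n$, and in an appropriate Lorentz or $L^q$ space when $k=n+1$; here the restriction to $x\in D((-A)^{\alpha})$ with the extra smoothness handles the low-frequency/large-time behaviour, and the polynomial resolvent bound $\|R(\lambda,A)\|_{\La(X)}\lesssim (1+|\lambda|)^\beta$ handles integrability for small $t$ after an integration by parts in the Laplace representation \eqref{eq:Laplace}. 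Taking Laplace transforms and using the resolvent identity, one identifies $\int_0^\infty e^{-i t\xi}t^{k-1}T(t)x\,\ud t$ with a constant times $R(i\xi,A)^{k}x$ up to lower-order terms, from which the $(L^p,L^q)$ multiplier boundedness of $\xi\mapsto R(i\xi,A)^k$ follows by Young's inequality for convolutions exactly as in the final sentence of Proposition~\ref{prop:exponential}. The symmetric direction of Proposition~\ref{prop:exponential}, namely that once one has \emph{some} such multiplier boundedness one automatically gets all $1\le p\le q\le\infty$, is the model to imitate.

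For the harder implication \eqref{it:polynomial1}$\Rightarrow$\eqref{it:polynomial2}, I would run the inverse-Laplace-transform argument of Section~\ref{subsec:expmult}, but with the ansatz $f(t)=e^{-\tilde\omega t}T(t)x$ replaced by (cutoffs of) $t^{k}e^{-\tilde\omega t}T(t)x$, and with $\tilde\omega\downarrow0$. Concretely, for each $k\le n$ one writes, via \eqref{eq:inverseFourier} and the resolvent identity applied $k+1$ times, a formula of the shape $t^{k}T(t)x = c_k\,\F^{-1}\!\big(R(i\cdot,A)^{k+1}\,\F g_k\big)(t)$ where $g_k$ is an explicit weighted orbit lying in $L^p(\R;D((-A)^\alpha))\cap L^1(\R;D((-A)^\alpha))$; feeding this into the multiplier boundedness from \eqref{it:polynomial1} (upgraded to map into $L^\infty(\R;X)$ via the density argument, as in the proof of Proposition~\ref{prop:exponential}) gives $\sup_{t\ge1} t^{k}\|T(t)x\|_X \lesssim \|x\|_{D((-A)^\alpha)}$. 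Taking $k=n$ yields \eqref{it:polynomial2}. The bookkeeping that makes this work is the identity $\frac{d^{k}}{d\xi^{k}}\big(e^{-i t\xi}\big)$ balanced against integration by parts, together with the algebraic fact that the $j$-fold derivative of $\xi\mapsto R(i\xi,A)$ is proportional to $R(i\xi,A)^{j+1}$; this is precisely why the hypothesis has to range over all $k\in\{1,\dots,n+1\}$ rather than just $k=1$.

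\textbf{Main obstacle.} I expect the technical heart of the argument to be the justification of the various inversion and integration-by-parts steps: the raw orbit $t\mapsto t^{n}T(t)x$ need not be integrable a priori on $[1,\infty)$ (that is what we are trying to prove), so one must argue by a bootstrap — first establishing a weaker a priori growth bound from the polynomial resolvent estimate $\|R(\lambda,A)\|_{\La(X)}\lesssim(1+|\lambda|)^\beta$ (which by the Hille--Yosida-type estimates gives at worst polynomial-in-$t$ growth of the orbit on $D((-A)^m)$ for $m$ large), then improving it step by step in $k$, each time legitimising the contour shift/inverse Fourier transform using the bound just obtained. Handling the endpoint $k=n+1$ (which is why the index set includes $n+1$) and the interaction between the smoothing provided by $D((-A)^\alpha)$ and the roughness introduced by the polynomial weight is where the argument needs the most care; the cleanest route is probably to invoke the general machinery of \cite[Theorem 4.6]{Rozendaal-Veraar18a} and restrict to the stated special case rather than reprove the Tauberian bookkeeping from scratch.
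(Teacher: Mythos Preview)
Your approach is essentially the one the paper sketches: both directions rest on the identity \eqref{eq:polynomialFourier}, i.e.\ that differentiating the Laplace transform produces $n!\,R(i\xi,A)^{n+1}x$, together with the resolvent identity and the same $L^{1}\cap L^{p}\to L^{\infty}$ upgrade used for Proposition~\ref{prop:exponential}; for \eqref{it:polynomial2}$\Rightarrow$\eqref{it:polynomial1} the paper likewise appeals to bounded kernels yielding $(L^{1},L^{\infty})$ multipliers, which is your Young's inequality argument.

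One remark on your ``main obstacle'': no bootstrap is needed. In the paper's argument for \eqref{it:polynomial1}$\Rightarrow$\eqref{it:polynomial2} the multiplier is \emph{not} applied to the polynomially weighted orbit $t\mapsto t^{n}T(t)x$ (whose integrability is what you want to prove), but to the exponentially damped orbit $f$ from \eqref{eq:deff}, which lies in $L^{1}\cap L^{p}$ for any $\tilde\omega$ above the growth bound of the semigroup. The resolvent identity then rewrites $R(i\xi,A)^{n+1}$ acting on $\widehat{f}(\xi)=R(\tilde\omega+i\xi,A)x$ in terms of the symbols $R(i\xi,A)^{k}$ covered by hypothesis \eqref{it:polynomial1}, and the output is identified via \eqref{eq:polynomialFourier} with the weighted orbit in $L^{\infty}$. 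This is also why the paper notes that only $k\in\{n-1,n,n+1\}\cap\N$ is actually used. Your closing instinct to defer to \cite[Theorem~4.6]{Rozendaal-Veraar18a} is exactly right, and that machinery sidesteps the a priori integrability issue you flagged.
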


Unlike the characterization of exponential stability in Proposition \ref{prop:exponential}, it appears that Theorem \ref{thm:polynomial} was not known prior to \cite{Rozendaal-Veraar18a}. We note that the constants $\beta$ and $C'$ in Theorem \ref{thm:polynomial} play no role in this characterization; we merely need to assume that the resolvent grows at most polynomially.

To prove Theorem~\ref{thm:polynomial} one proceeds in a manner that is analogous to the proof of Proposition \ref{prop:exponential}. For the implication \eqref{it:polynomial1}$\Rightarrow$\eqref{it:polynomial2}, one notes that 
\begin{equation}\label{eq:polynomialFourier}
\int_{0}^{\infty}e^{-i\xi t}t^{n}T(t)x\,\ud t=\frac{1}{(-i)^{n}}\frac{\ud^{n}}{\ud \xi^{n}}\int_{0}^{\infty}e^{-i\xi t}T(t)x\,\ud t=n!(i\xi+A)^{-n-1}x
\end{equation}
for all $\xi\in\R$ and for suitable $x\in X$, as follows from \eqref{eq:inverseFourier}. Moreover, assumption \eqref{it:polynomial1} can be used to show that the Fourier multiplier with symbol $\xi\mapsto R(i\xi,A)^{n+1}$ is bounded from $L^{1}(\R;D((-A)^{\alpha}))\cap L^{p}(\R;D((-A)^{\alpha}))$ to $L^{\infty}(\R;X)$ (in fact, here one only uses the assumption for $k\in\{n-1,n,n+1\}\cap \N$). Then one defines $f$ as in \eqref{eq:deff} and uses the resolvent identity and \eqref{eq:polynomialFourier} to obtain \eqref{it:polynomial2}. The implication \eqref{it:polynomial2}$\Rightarrow$\eqref{it:polynomial1} follows from the fact that operators with uniformly bounded kernels are $(L^{1},L^{\infty})$ multipliers.

Through a rescaling argument, the case $n=0$ in Theorem \ref{thm:polynomial} generalizes Proposition~\ref{prop:exponential}. However, unlike in the case of exponential stability, where $(L^{p},L^{q})$ Fourier multipliers with $p\neq q$ were only used to derive concrete decay estimates from Proposition \ref{prop:exponential}, here the case where $p\neq q$ is even necessary for the \emph{characterization} of polynomial stability using Fourier multipliers. Indeed, it is well known that any bounded $(L^{p},L^{p})$ Fourier multiplier has a uniformly bounded symbol, whereas damped wave equations give rise to uniformly bounded semigroups for which $\|R(i\xi,A)\|_{\La(X)}$ is not uniformly bounded. For such semigroups and for $n=\alpha=0$, \eqref{it:polynomial1} cannot hold if $p=q$, whereas \eqref{it:polynomial2} does hold. On the other hand, if \eqref{it:polynomial1} holds for some $p,q\in[1,\infty]$, then it also holds for $p=1$ and $q=\infty$, again because operators with uniformly bounded kernels are $(L^{1},L^{\infty})$ multipliers.

Now, with Theorem \ref{thm:polynomial} in hand, our next step is to extend Lemma \ref{lem:decay} to this more refined setting, where the resolvent need not be uniformly bounded. The appropriate extension, a special case of \cite[Proposition 3.4]{Rozendaal-Veraar18a}, is as follows.

\begin{proposition}\label{prop:decay2}
Let $(T(t))_{t\geq0}$ be a $C_{0}$-semigroup with generator $A$ on a Banach space $X$. Suppose that there exist $\beta,C'\geq0$ such that $\lambda\in\rho(A)$ for all $\lambda\in\C$ with $\Real(\lambda)\geq0$, and
\begin{equation}\label{eq:blowup}
\|R(\lambda,A)\|_{\La(X)}\leq C'(1+|\lambda|)^{\beta}.
\end{equation}
Then for each $\alpha\in[0,1]$ there exists a $C\geq0$ such that 
\begin{equation}\label{eq:resolventdecay2}
\|R(i\xi,A)\|_{\La(D((-A)^{\alpha+\beta}),X)}\leq C(1+|\xi|)^{-\alpha}
\end{equation}
for all $\xi\in\R$.
\end{proposition}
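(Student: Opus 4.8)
The plan is to reduce the claimed resolvent decay to the Cauchy integral (contour) representation of the resolvent, exactly as in the proof of Lemma~\ref{lem:decay}, but keeping track of the polynomial blow-up factor coming from \eqref{eq:blowup}. First I would observe that it suffices to treat $\alpha=1$; the case $\alpha\in(0,1)$ then follows by interpolation (the $\alpha=0$ case being \eqref{eq:blowup} itself), using that $D((-A)^{\alpha+\beta})$ is, up to equivalence of norms, the complex interpolation space between $X$ and $D((-A)^{1+\beta})$ with parameter $\alpha$, and that $\xi\mapsto R(i\xi,A)$ takes values in $\La(X)\cap\La(D((-A)^{1+\beta}),X)$ with the respective bounds $(1+|\xi|)^{\beta}$ and $(1+|\xi|)^{-1}$. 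So the heart of the matter is the estimate
\[
\|R(i\xi,A)\|_{\La(D((-A)^{1+\beta}),X)}\leq C(1+|\xi|)^{-1},\quad \xi\in\R.
\]

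To prove this, fix $\xi\in\R$ with $|\xi|$ large and let $x\in D((-A)^{1+\beta})$. The idea is to write $R(i\xi,A)x$ via a Dunford--Cauchy integral over a contour $\Gamma$ that stays in the half-plane $\{\Real(\lambda)\ge 0\}$ where \eqref{eq:blowup} holds, and that is chosen adapted to the point $i\xi$: for instance take $\Gamma$ to consist of the two rays $\lambda=i\xi\pm i s + s/|\xi|^{\beta}$ for $s\ge 0$ leaving $i\xi$, closed off appropriately, so that $|\lambda - i\xi|\gtrsim \mathrm{dist}$-type quantities are controlled and $\Real(\lambda)>0$ off the starting point. Along such a contour one represents
\[
R(i\xi,A)x=\frac{1}{2\pi i}\int_{\Gamma}\frac{1}{i\xi-\lambda}R(\lambda,A)\,\ud\lambda\cdot x
\]
after first subtracting a power of $(-A)^{-1}$ to make the integral converge: concretely one uses the identity $R(i\xi,A)x=(-A)^{-1-\beta}\big[(-A)^{1+\beta}R(i\xi,A)\big]x$ together with $(-A)^{-1-\beta}=\frac{1}{2\pi i}\int_{\Gamma}\lambda^{-1-\beta}R(\lambda,A)\,\ud\lambda$ (suitable branch), so that the factor $\lambda^{-1-\beta}$ supplies enough decay at infinity along $\Gamma$ to absorb the growth $\|R(\lambda,A)\|_{\La(X)}\lesssim(1+|\lambda|)^{\beta}$ and gain an extra factor $(1+|\xi|)^{-1}$ from the distance between $\Gamma$ and $i\xi$. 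Substituting \eqref{eq:blowup} and parametrizing, the resulting scalar integral is estimated by elementary means, yielding the bound $C(1+|\xi|)^{-1}\|x\|_{D((-A)^{1+\beta})}$. For bounded $|\xi|$ the inequality is trivial since $\xi\mapsto R(i\xi,A)$ is continuous with values in $\La(X)$ (hence in $\La(D((-A)^{1+\beta}),X)$) on the compact set in question.

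The main obstacle I anticipate is bookkeeping rather than conceptual: one must choose the contour $\Gamma$ (and the fractional power representation) carefully so that simultaneously (i) $\Gamma$ stays in the region of validity of \eqref{eq:blowup}, (ii) the distance from $\Gamma$ to $i\xi$ scales like a positive power of $|\xi|$ so as to produce the gain $(1+|\xi|)^{-1}$, and (iii) the integral $\int_{\Gamma}|\lambda|^{-1-\beta}(1+|\lambda|)^{\beta}\,|\ud\lambda|$ converges with a bound uniform in $\xi$. Getting the scaling exponents to match up — in particular verifying that the trade-off between the decay $|\lambda|^{-1-\beta}$ and the growth $(1+|\lambda|)^{\beta}$ leaves exactly one power to spare, distributed as the claimed $(1+|\xi|)^{-1}$ — is where care is needed; the $R$-bounded variant is handled identically, replacing every norm estimate along $\Gamma$ by the corresponding $R$-bound and invoking that $R$-bounds are stable under the absolutely convergent contour integration.
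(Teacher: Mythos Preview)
Your strategy---reduce to $\alpha=1$ and then represent $R(i\xi,A)(-A)^{-1-\beta}$ via a Cauchy/Dunford contour integral---is the approach the paper has in mind (it says explicitly that Lemma~\ref{lem:decay} is proved by ``estimating contour integrals that appear in the Cauchy integral representation of the resolvent'', and Proposition~\ref{prop:decay2} is the same argument with the extra $\beta$). Two points, however, need to be repaired before the sketch becomes a proof.

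\emph{The contour.} The specific $\Gamma$ you describe lies in the closed right half-plane and has its vertex at $i\xi$. But $\sigma(A)$ sits in the open \emph{left} half-plane, so a contour entirely in $\{\Real\lambda\ge 0\}$ does not separate $\sigma(A)$ from the pole at $i\xi$, and the Dunford integral over it does not represent $R(i\xi,A)(-A)^{-1-\beta}$. The standard fix is to first use a Neumann series to push the bound \eqref{eq:blowup} a little to the left: from $\|R(i\eta,A)\|\le C'(1+|\eta|)^{\beta}$ one gets $\lambda\in\rho(A)$ with $\|R(\lambda,A)\|\lesssim (1+|\Imag\lambda|)^{\beta}$ whenever $\Real\lambda\ge -c(1+|\Imag\lambda|)^{-\beta}$ for some $c>0$. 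Then take $\Gamma=\{-\tfrac{c}{2}(1+|s|)^{-\beta}+is:s\in\R\}$; this contour genuinely separates $i\xi$ from $\sigma(A)$, stays in the region where the (extended) resolvent bound holds, and the scalar bookkeeping you outline then goes through to give the $(1+|\xi|)^{-1}$ gain.

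\emph{The interpolation step.} The parameter you wrote is off: $[X,D((-A)^{1+\beta})]_{\theta}$ with $\theta=\alpha$ does not give $D((-A)^{\alpha+\beta})$; you need $\theta=(\alpha+\beta)/(1+\beta)$, and even then the identification of the complex interpolation space with the fractional domain requires bounded imaginary powers, which is not assumed. It is cleaner either to run the contour argument directly for each $\alpha\in[0,1]$ (the integrand then carries the factor $(-\lambda)^{-\alpha-\beta}$, and the same estimate works), or to bypass complex interpolation and use the moment inequality for fractional powers, which needs only sectoriality.
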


Again, \eqref{eq:blowup} is in fact equivalent to \eqref{eq:resolventdecay2}, and $R$-bounded versions also hold.

\subsection{Concrete results on polynomial decay}

Now we can follow the same procedure as in Section \ref{subsec:expresults}, under the assumptions of Proposition \ref{prop:decay2}:
\begin{enumerate}
\item\label{it:polprocede1}
Use Proposition \ref{prop:decay2} to obtain resolvent decay as in \eqref{eq:resolventdecay2};
\item\label{it:polprocede2} For a given $n\geq0$, apply the Fourier multiplier theorems from Section \ref{sec:multipliers} to the symbol $\xi\mapsto R(\w'+i\xi,A)^{k}\in\La(D((-A)^{\gamma}),X)$, for all $k\in\{1,\ldots,n+1\}$ and for $\gamma\geq0$ as small as possible;
\item\label{it:polprocede3} Use Theorem \ref{thm:polynomial} to conclude that $\|T(t)x\|_{X}\lesssim t^{-n}\|x\|_{D((-A)^{\gamma})}$ for all $x\in D((-A)^{\gamma})$ and $t\geq1$.
\end{enumerate}

As indicated before, in \eqref{it:polprocede2} one only needs to consider $k\in\{n-1,n,n+1\}\cap \N$, but in many applications this makes no difference, as the term where $k=n+1$ is most problematic. Indeed, to apply the Fourier multiplier theorems from Section \ref{sec:multipliers}, one has to choose $\gamma\geq0$ so that, for a suitable $\alpha\geq0$, one has
\begin{equation}\label{eq:resolventdecay3}
\|R(i\xi,A)^{k}\|_{\La(D((-A)^{\gamma}),X)}\lesssim (1+|\xi|)^{-\alpha}
\end{equation}
for all $1\leq k\leq n+1$ and $\xi\in\R$. Proposition \ref{prop:decay2} yields 
\[
\sup_{\xi\in\R}\|R(i\xi,A)^{k-1}\|_{\La(D((-A)^{(k-1)\beta}),X)}\leq\sup_{\xi\in\R}\|R(i\xi,A)A^{-\beta}\|_{\La(X)}^{k-1}<\infty.
\]
Hence we can apply Proposition \ref{prop:decay2} to the restriction of $A$ to $D((-A)^{(k-1)\beta})$: 
\begin{align*}
&\|R(i\xi,A)^{k}\|_{\La(D((-A)^{k\beta+\alpha}),X)}\\
&\leq \|R(i\xi,A)^{k-1}\|_{\La(D((-A)^{(k-1)\beta}),X)}\|R(i\xi,A)\|_{\La(D((-A)^{k\beta+\alpha}),D((-A)^{(k-1)\beta}))}\\
&\lesssim (1+|\xi|)^{-\alpha}.
\end{align*}
So the term $k=n+1$ dictates the value of $\gamma$ for which one obtains \eqref{eq:resolventdecay3}.

Either way, by combining this reasoning with an interpolation lemma (see \cite[Lemma 4.2]{Rozendaal-Veraar18a}), to deal with the fact that Theorem \ref{thm:polynomial} only involves $n\in\Z_{+}$, one arrives at the following result, Corollary 4.11 in \cite{Rozendaal-Veraar18a}.

\begin{theorem}\label{thm:Hilbertpoly}
Let $(T(t))_{t\geq0}$ be a $C_{0}$-semigroup with generator $A$ on a Hilbert space $X$, and let $s\in[0,\infty)$. Suppose that there exist $\beta,C'\geq0$ such that $\lambda\in\rho(A)$ for all $\lambda\in\C$ with $\Real(\lambda)\geq0$, and $\|R(\lambda,A)\|_{\La(X)}\leq C'(1+|\lambda|)^{\beta}$. Then, for each $s\geq0$, there exists a $C\geq0$ such that 
\[
\|T(t)x\|_{X}\leq Ct^{-s}\|x\|_{D((-A)^{(s+1)\beta})}
\]
for all $x\in D((-A)^{(s+1)\beta})$ and $t\geq1$.
\end{theorem}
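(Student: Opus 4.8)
The plan is to carry out the three-step scheme described above and then fill in the non-integer values of $s$ by interpolation. Fix first $n\in\Z_{+}$; the aim is the estimate $\|T(t)x\|_{X}\leq C_{n}t^{-n}\|x\|_{D((-A)^{(n+1)\beta})}$ for $t\geq1$, namely the case $s=n$. Since $\{\Real(\lambda)\geq0\}\subseteq\rho(A)$ and the resolvent grows at most polynomially, Proposition~\ref{prop:decay2} is available, and I would first prove by induction on $k\in\N$ that
\[
\sup_{\xi\in\R}\|R(i\xi,A)^{k}\|_{\La(D((-A)^{k\beta}),X)}<\infty .
\]
The base case $k=1$ is Proposition~\ref{prop:decay2} with $\alpha=0$. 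For the inductive step one notes that $R(\lambda,A)$ commutes with $(-A)^{(k-1)\beta}$, so the part $A_{k-1}$ of $A$ in $D((-A)^{(k-1)\beta})$ again generates a $C_{0}$-semigroup, has $\{\Real(\lambda)\geq0\}$ in its resolvent set, and satisfies the same polynomial resolvent bound; applying Proposition~\ref{prop:decay2} to $A_{k-1}$ with $\alpha=0$ gives $\|R(i\xi,A)\|_{\La(D((-A)^{k\beta}),D((-A)^{(k-1)\beta}))}\lesssim 1$, and composing this with the bound for $k-1$ closes the induction.

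Next I would exploit the Hilbert space structure. Each fractional domain $D((-A)^{\gamma})$, with its graph norm, is isomorphic to a Hilbert space, so the operator-valued Plancherel theorem shows that a strongly measurable symbol $m\colon\R\to\La(D((-A)^{\gamma}),X)$ with $\sup_{\xi}\|m(\xi)\|_{\La(D((-A)^{\gamma}),X)}<\infty$ defines a bounded Fourier multiplier from $L^{2}(\R;D((-A)^{\gamma}))$ to $L^{2}(\R;X)$. Taking $\gamma=(n+1)\beta$ and using $D((-A)^{(n+1)\beta})\hookrightarrow D((-A)^{k\beta})$ for $1\leq k\leq n+1$, the bounds just proved give $\sup_{\xi}\|R(i\xi,A)^{k}\|_{\La(D((-A)^{(n+1)\beta}),X)}<\infty$ for all such $k$. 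Hence $\xi\mapsto R(i\xi,A)^{k}$ is a bounded Fourier multiplier from $L^{2}(\R;D((-A)^{(n+1)\beta}))$ to $L^{2}(\R;X)$ for each $k\in\{1,\ldots,n+1\}$, which is exactly condition \eqref{it:polynomial1} of Theorem~\ref{thm:polynomial} with $\alpha=(n+1)\beta$ and $p=q=2$. Theorem~\ref{thm:polynomial} then yields $\|T(t)x\|_{X}\leq C_{n}t^{-n}\|x\|_{D((-A)^{(n+1)\beta})}$ for $t\geq1$, which is the claim for $s=n$.

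Finally, for arbitrary $s\geq0$ write $s=n+\theta$ with $n=\lfloor s\rfloor$ and $\theta\in[0,1)$, and combine the two integer-order estimates already obtained, $\|T(t)\|_{\La(D((-A)^{(n+1)\beta}),X)}\lesssim t^{-n}$ and $\|T(t)\|_{\La(D((-A)^{(n+2)\beta}),X)}\lesssim t^{-(n+1)}$ for $t\geq1$, interpolating both the time exponents and the fractional domains; this is the content of the interpolation lemma \cite[Lemma 4.2]{Rozendaal-Veraar18a}, and since $(n+1+\theta)\beta=(s+1)\beta$ the outcome is $\|T(t)\|_{\La(D((-A)^{(s+1)\beta}),X)}\lesssim t^{-s}$ for $t\geq1$. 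I expect this last step to be the delicate one: identifying $D((-A)^{(s+1)\beta})$ with an interpolation space between $D((-A)^{(n+1)\beta})$ and $D((-A)^{(n+2)\beta})$ is not automatic for a general generator --- it would follow from bounded imaginary powers, which may fail even on Hilbert spaces --- so one argues directly at the level of the orbits $t\mapsto T(t)x$ as in \cite[Lemma 4.2]{Rozendaal-Veraar18a}. The other point demanding care is the bookkeeping in the inductive first step, since it requires knowing that the fractional powers and the resolvent interact well with passage to the invariant subspaces $D((-A)^{j\beta})$.
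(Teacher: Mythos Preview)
Your proposal is correct and follows essentially the same route as the paper: derive uniform bounds $\sup_{\xi}\|R(i\xi,A)^{k}\|_{\La(D((-A)^{k\beta}),X)}<\infty$ from Proposition~\ref{prop:decay2} (with $\alpha=0$), apply Plancherel to verify condition~\eqref{it:polynomial1} of Theorem~\ref{thm:polynomial} with $p=q=2$ and $\alpha=(n+1)\beta$, and then pass to non-integer $s$ via \cite[Lemma~4.2]{Rozendaal-Veraar18a}. Your remarks about the bookkeeping in the inductive step and the delicacy of the interpolation step (avoiding an unjustified identification of fractional domains with complex interpolation spaces) are on point and match what the paper and the cited lemma actually do.
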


Theorem \ref{thm:Hilbertpoly} is sharp up to possible $\veps$-loss, at least for $s=0$. More general results hold under Fourier type, and type and cotype assumptions, but it seems that even Theorem \ref{thm:Hilbertpoly} was not known prior to \cite{Rozendaal-Veraar18a}. A version of Theorem \ref{thm:Hilbertpoly} on general Banach spaces was originally obtained in~\cite{BaEnPrSc06}, and in~\cite{Schnaubelt06} one can find results on polynomial dichotomies under Fourier type assumptions. In \cite{Rozendaal-Veraar18a} one can also find a version of Theorem~\ref{thm:Hilbertpoly} involving polynomial resolvent blow-up at zero, for asymptotically analytic semigroups, a class which includes all analytic semigroups. Although the exponential stability theory for analytic semigroups is well known, this does not seem to be the case for polynomial decay rates, and to the author's best knowledge the analogue of Theorem~\ref{thm:Hilbertpoly} for analytic semigroups and resolvent blow-up at zero was not known prior to \cite{Rozendaal-Veraar18a}.

Theorem~\ref{thm:Hilbertpoly} makes no a priori growth assumptions on the semigroup; it only requires spectral information. It is illustrative to compare this with the Gearhart--Pr\"{u}ss theorem. Let $(T(t))_{t\geq0}$ be a $C_{0}$-semigroup with generator $A$ on a Hilbert space $X$. Suppose that $\lambda\in\rho(A)$ for all $\lambda\in\C$ with $\Real(\lambda)>0$, and that $\sup_{\Real(\lambda)\geq \w'}\|R(\lambda,A)\|_{\La(X)}<\infty$ for each $\w'>0$. Then a rescaled version of Theorem~\ref{thm:Fourierdecay} yields $\|T(t)x\|_{\La(X)}\lesssim e^{\w't}\|x\|_{X}$ for all $\w'>0$, $x\in X$ and $t\geq0$. This also follows from a rescaled version of Theorem~\ref{thm:Hilbertpoly}. On the other hand, if additionally $i\R\subseteq\rho(A)$ and if the resolvent blows up polynomially along the imaginary axis, then Theorem~\ref{thm:Hilbertpoly} specifies fractional domains for which the corresponding semigroup orbits are bounded, or polynomially decaying. Thus the results from this section improve upon those from Section \ref{sec:exponential}, with more refined information.

\section{Refined growth}\label{sec:growth}

In the previous section we improved upon the exponential stability results from Section~\ref{sec:exponential} by obtaining polynomial decay given resolvent blow-up along the imaginary axis. In this section we will extend the results from Section \ref{sec:exponential} in a different manner, by removing the assumption that the imaginary axis is contained in the resolvent set. 
In this case, one should expect general semigroup orbits to display growth behavior, and we will be concerned with obtaining more refined growth rates than the exponential ones from Section~\ref{sec:exponential}. 
It turns out that this problem can again be connected to the theory of $(L^{p},L^{q})$ Fourier multipliers, after which the multiplier theorems from Section \ref{sec:multipliers} can be used to obtain concrete growth rates. 

As in Problem \ref{it:problem3} and Section \ref{subsec:problem3}, let $(T(t))_{t\geq0}$ be a $C_{0}$-semigroup with generator $A$ on a Banach space $X$ such that $\lambda\in\rho(A)$ for all $\lambda\in\C$ with $\Real(\lambda)>0$. Let $g:(0,\infty)\to(0,\infty)$ be non-decreasing and such that
\begin{equation}\label{eq:Kreissgeneral2}
\|R(\lambda,A)\|_{\La(X)}\leq g\Big(\frac{1}{\Real(\lambda)}\Big)
\end{equation}
for all $\lambda\in\C$ with $\Real(\lambda)>0$. 

\subsection{Refined growth and Fourier multipliers}\label{subsec:growthmult}

To relate \eqref{eq:Kreissgeneral2} to growth rates for the associated semigroup, we follow the same procedure as in Sections~\ref{sec:exponential} and~\ref{sec:polynomial}. First, we connect refined growth rates for the semigroup to $(L^{p},L^{q})$ Fourier multiplier properties of the resolvent. Recall from \eqref{eq:multipliernorm} that $\|m\|_{\Ma_{p,q}(X,Y)}=\|T_{m}\|_{\La(L^{p}(\R;X),L^{q}(\R;Y))}$ is the multiplier norm of a symbol $m:\R\to\La(X,Y)$.

\begin{theorem}\label{thm:KreissLpLq}
Let $(T(t))_{t\geq0}$ be a $C_{0}$-semigroup with generator $A$ on a Banach space $X$, such that $\lambda\in\rho(A)$ for all $\lambda\in\C$ with $\Real(\lambda)>0$. Let $\alpha\geq0$, and suppose that there exists a non-decreasing $g:(0,\infty)\to(0,\infty)$, and $p,q\in[1,\infty]$, such that $R(\w+i\cdot,A)\in\Ma_{p,q}(D((1-A)^{\alpha}),X)$ for all $\w\in(0,1]$, with
\begin{equation}\label{eq:subtleLpLq}
\|R(\w+i\cdot,A)\|_{\Ma_{p,q}(D((1-A)^{\alpha}),X)}\leq g\big(\tfrac{1}{\w}\big).
\end{equation}
Then there exists a $C\geq0$ such that 
\begin{equation}\label{eq:subtlegrowth}
\|T(t)x\|_{X}\leq Cg(t)\|x\|_{D((1-A)^{\alpha})}
\end{equation}
for all $x\in D((1-A)^{\alpha})$ and $t\geq1$.
\end{theorem}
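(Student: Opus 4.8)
The plan is to mimic the proof of the implication \eqref{it:exp1}$\Rightarrow$\eqref{it:exp2} in Proposition~\ref{prop:exponential}, but now tracking the dependence of all constants on $\w\in(0,1]$ so as to obtain the quantitative bound \eqref{eq:subtlegrowth}. First I would fix $t\geq1$ and choose a good representative scale for the Fourier-analytic argument by setting $\w:=1/t\in(0,1]$. The point is that on the weighted orbit $f_{\w}(s):=e^{-\w s}T(s)x$ for $s\geq0$ (extended by zero for $s<0$), one has, just as in \eqref{eq:deff}--\eqref{eq:imagemult}, the identity $e^{-\w s}T(s)x = T_{m_{\w}}(f_{\w})(s)$ for a symbol $m_{\w}$ which, via the resolvent identity, is built from $R(\w+i\cdot,A)$ composed with a bounded Fourier multiplier. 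Here one has to be slightly careful: since we only assume the resolvent is defined in the open right half-plane, one should first verify that $(e^{-\w s}T(s))_{s\geq0}$ is exponentially stable (this follows from \eqref{eq:Kreissgeneral2} together with a standard resolvent-bound argument, or by appealing to the version of Proposition~\ref{prop:exponential} valid for $\w'=0$ after a harmless further rescaling), so that $f_{\w}\in L^{p}(\R;D((1-A)^{\alpha}))\cap L^{1}(\R;D((1-A)^{\alpha}))$ and the inverse Fourier transform in \eqref{eq:inverseFourier} converges for $x\in D((1-A)^{\alpha})$ after the density argument; the fractional domain $D((1-A)^{\alpha})$ is used in place of $D((-A)^{\alpha})$ precisely because $0$ need not be in $\rho(A)$.

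The heart of the argument is then the chain of estimates
\begin{align*}
\|T(t)x\|_{X}
&= e^{\w t}\,\|e^{-\w t}T(t)x\|_{X}
\leq e\,\big\|T_{m_{\w}}(f_{\w})\big\|_{L^{\infty}(\R;X)}\\
&\lesssim \|m_{\w}\|_{\Ma_{p,q}(D((1-A)^{\alpha}),X)}\,\big(\|f_{\w}\|_{L^{p}(\R;D((1-A)^{\alpha}))}+\|f_{\w}\|_{L^{1}(\R;D((1-A)^{\alpha}))}\big),
\end{align*}
where I have used $e^{\w t}=e$ by the choice $\w=1/t$, together with the fact (as in the discussion following Proposition~\ref{prop:exponential}) that an $(L^{p},L^{q})$ multiplier composed with the obvious convolution with an $L^{1}\cap L^{p'}$ kernel actually maps $L^{p}\cap L^{1}$ into $L^{\infty}$ with norm controlled by the $\Ma_{p,q}$-norm. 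By hypothesis \eqref{eq:subtleLpLq}, $\|m_{\w}\|_{\Ma_{p,q}(D((1-A)^{\alpha}),X)}\lesssim g(1/\w)=g(t)$. It therefore remains to bound the two norms of $f_{\w}$ by a constant times $\|x\|_{D((1-A)^{\alpha})}$, uniformly in $\w\in(0,1]$; here one uses that $(e^{-\w s}T(s))_{s\geq0}$ is exponentially stable with a stability rate and constant that can be taken uniform over $\w\in(0,1]$ (again a consequence of \eqref{eq:Kreissgeneral2}, since decreasing $\w$ only moves further into the region of resolvent boundedness). This yields $\|T(t)x\|_{X}\lesssim g(t)\|x\|_{D((1-A)^{\alpha})}$ for all $t\geq1$, which is \eqref{eq:subtlegrowth}.

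The main obstacle I expect is not any single estimate but the bookkeeping of uniformity in $\w$: one must ensure that the implied constants in passing from the multiplier bound to the $L^{\infty}$ bound, in the exponential stability of the rescaled semigroups, and in the bounds on $\|f_{\w}\|_{L^{p}}$ and $\|f_{\w}\|_{L^{1}}$, are all independent of $\w\in(0,1]$ (they are allowed to depend on $p,q$ and on the fixed constants $\beta,C'$ governing the resolvent growth, but not on $\w$). A secondary technical point is the precise form of the symbol $m_{\w}$ and checking that multiplication by the extra bounded factor coming from the resolvent identity does not inflate the $\Ma_{p,q}$-norm beyond a constant multiple — this is routine but should be spelled out, as should the replacement of $A^{-\alpha}$ by $(1-A)^{-\alpha}$ throughout, which is harmless since $1\in\rho(A)$.
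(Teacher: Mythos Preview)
Your overall strategy --- mirror the implication \eqref{it:exp1}$\Rightarrow$\eqref{it:exp2} of Proposition~\ref{prop:exponential} while tracking the $\w$-dependence of all constants, and then set $\w=1/t$ --- is exactly the paper's approach. However, there is a genuine gap in your execution, concerning the choice of the input function.

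You define $f_{\w}(s):=e^{-\w s}T(s)x$ with the \emph{same} small parameter $\w\in(0,1]$ that appears in the resolvent, and you assert that $(e^{-\w s}T(s))_{s\geq0}$ is exponentially stable, uniformly in $\w\in(0,1]$, as a consequence of the resolvent bound~\eqref{eq:Kreissgeneral2}. This is false on general Banach spaces: as recalled in Section~\ref{subsubsec:Kreiss}, even under the Kreiss condition~\eqref{eq:Kreiss} the semigroup may grow exponentially, so $e^{-\w s}T(s)$ need not decay at all for small $\w$. (Also, decreasing $\w$ moves you \emph{toward} the imaginary axis, not further into the region of resolvent boundedness.) Consequently your $f_{\w}$ need not lie in $L^{1}\cap L^{p}$, and the uniform bounds on $\|f_{\w}\|_{L^{1}}$ and $\|f_{\w}\|_{L^{p}}$ that you need are simply unavailable. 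A secondary symptom of the same confusion is your identity $e^{-\w s}T(s)x=T_{m_{\w}}(f_{\w})(s)$: with your definition of $f_{\w}$ this reads $f_{\w}=T_{m_{\w}}(f_{\w})$, which forces $m_{\w}$ to be trivial.

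The fix is to follow \eqref{eq:mstability}--\eqref{eq:imagemult} literally: choose a \emph{fixed} $\wt{\w}>0$ large enough that $(e^{-\wt{\w}t}T(t))_{t\geq0}$ is exponentially stable (such $\wt{\w}$ exists for any $C_{0}$-semigroup, independently of any resolvent hypothesis), set $f(t):=e^{-\wt{\w}t}T(t)x$ for $t\geq0$, and take $m_{\w}(\xi):=I_{X}+(\wt{\w}-\w)R(\w+i\xi,A)$. Then $T_{m_{\w}}(f)(t)=e^{-\w t}T(t)x$, the norms $\|f\|_{L^{1}}$ and $\|f\|_{L^{p}}$ are bounded by constants depending only on $\wt{\w}$ (hence independent of $\w$), and $\|m_{\w}\|_{\Ma_{p,q}(D((1-A)^{\alpha}),X)}\leq 1+\wt{\w}\,g(1/\w)\lesssim g(1/\w)$. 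With this correction the rest of your outline goes through and coincides with the paper's argument.
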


Since we do not assume that $0\in\rho(A)$, throughout we have to work with $(1-A)^{\alpha}$ instead of $(-A)^{\alpha}$.

The constant $C$ in \eqref{eq:subtlegrowth} can be made explicit. A version of the converse implication, from \eqref{eq:subtlegrowth} to \eqref{eq:subtleLpLq}, also holds, but the statement is less elegant. We refer the reader to \cite[Theorem 3.11]{Rozendaal-Veraar18b} for the converse implication for polynomial $g$, the proof of which also applies to more general $g$. As before, it relies on Young's inequality. 

We in fact already proved most of Theorem \ref{thm:KreissLpLq} when we proved Proposition~\ref{prop:exponential}, which in turn relied on arguments from Section \ref{subsec:problem1}. Indeed, under the assumptions of the theorem, the Fourier multiplier operator with symbol $\xi\mapsto R(\w+i\xi,A)$ is bounded from $L^{1}(\R;D((1-A)^{\alpha}))\cap L^{p}(\R;D((1-A)^{\alpha}))$ to $L^{\infty}(\R;X)$, with an operator norm which can be estimated in terms of $g(\frac{1}{\w})$. Then~\eqref{eq:imagemult} yields $\|e^{-\w t}T(t)x\|_{X}\lesssim g\big(\tfrac{1}{\w}\big)\|x\|_{D((1-A)^{\alpha})}$ for all $x$ in a suitable dense subspace and for all $t\geq0$. Now set $\w:=1/t$ to arrive at the desired conclusion.

\subsection{Concrete results on refined growth}\label{subsec:growthresults}

With Theorem~\ref{thm:KreissLpLq} in hand, we can follow the same procedure as in the previous sections. One can use \eqref{eq:Kreissgeneral2} to show that the resolvent satisfies the conditions of a suitable $(L^{p},L^{q})$ Fourier multiplier theorem, and then Theorem \ref{thm:KreissLpLq} yields refined growth rates for certain semigroup orbits. On Hilbert spaces this procedure leads to the following result.

\begin{theorem}\label{thm:KreissHilbert}
Let $(T(t))_{t\geq0}$ be a $C_{0}$-semigroup with generator $A$ on a Hilbert space $X$, such that $\lambda\in\rho(A)$ for all $\lambda\in\C$ with $\Real(\lambda)>0$. Suppose that there exists a non-decreasing $g:(0,\infty)\to(0,\infty)$ such that $\|R(\lambda,A)\|_{\La(X)}\leq g\big(\frac{1}{\Real(\lambda)}\big)$ for all such $\lambda$. Then there exists a $C\geq0$ such that $\|T(t)x\|_{X}\leq Cg(t)\|x\|_{X}$ for all $x\in X$ and $t\geq1$.
\end{theorem}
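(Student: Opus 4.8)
The plan is to deduce Theorem~\ref{thm:KreissHilbert} from Theorem~\ref{thm:KreissLpLq} by taking $\alpha=0$ and $p=q=2$, so that everything reduces to showing that the resolvent symbol $\xi\mapsto R(\w+i\xi,A)$ is a bounded $(L^{2}(\R;X),L^{2}(\R;X))$ Fourier multiplier with multiplier norm controlled by $g(1/\w)$, uniformly for $\w\in(0,1]$. First I would invoke Plancherel's theorem: since $X$ is a Hilbert space, a symbol $m:\R\to\La(X)$ satisfies $m\in\Ma_{2,2}(X)$ with $\|m\|_{\Ma_{2,2}(X)}=\ess\sup_{\xi\in\R}\|m(\xi)\|_{\La(X)}$. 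Applying this to $m(\xi)=R(\w+i\xi,A)$ and using the hypothesis $\|R(\lambda,A)\|_{\La(X)}\leq g(1/\Real(\lambda))$ with $\Real(\w+i\xi)=\w$, we get
\begin{equation*}
\|R(\w+i\cdot,A)\|_{\Ma_{2,2}(X)}=\sup_{\xi\in\R}\|R(\w+i\xi,A)\|_{\La(X)}\leq g\big(\tfrac{1}{\w}\big)
\end{equation*}
for every $\w>0$, and in particular for $\w\in(0,1]$. This is exactly condition \eqref{eq:subtleLpLq} with $\alpha=0$, $p=q=2$, $D((1-A)^{0})=X$.

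Next I would simply feed this into Theorem~\ref{thm:KreissLpLq}: the hypotheses of that theorem are met (we only need $\lambda\in\rho(A)$ for $\Real(\lambda)>0$, which is assumed, plus the multiplier bound just established), so its conclusion gives a $C\geq0$ with $\|T(t)x\|_{X}\leq Cg(t)\|x\|_{D((1-A)^{0})}=Cg(t)\|x\|_{X}$ for all $x\in X$ and $t\geq1$. Since $D((1-A)^{0})=X$ with equivalent norms, this is precisely the claimed estimate, and no density or interpolation argument is needed because $\alpha=0$.

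I do not expect any serious obstacle here; the proof is essentially a two-line reduction. The only point requiring a word of care is the uniformity in $\w$: Theorem~\ref{thm:KreissLpLq} asks for the multiplier bound \eqref{eq:subtleLpLq} to hold for \emph{all} $\w\in(0,1]$ with the \emph{same} non-decreasing function $g$, and the Plancherel identity delivers exactly this with no $\w$-dependent constants, because $g(1/\w)$ already absorbs the growth. One might also note that, since $g$ is non-decreasing, restricting attention to $\w\in(0,1]$ and $t\geq1$ is harmless. Thus the heart of the matter is the trivial observation that on a Hilbert space the $(L^2,L^2)$ multiplier norm is just the $\La(X)$-supremum of the symbol — the same observation that, as the introduction notes, underlies the Gearhart--Pr\"uss theorem — combined with the abstract transference result Theorem~\ref{thm:KreissLpLq}.
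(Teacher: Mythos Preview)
Your proposal is correct and follows essentially the same approach as the paper: the text preceding Theorem~\ref{thm:KreissHilbert} explicitly says that on Hilbert spaces one combines the resolvent bound with the relevant $(L^{p},L^{q})$ multiplier theorem (here Plancherel, $p=q=2$) and then applies Theorem~\ref{thm:KreissLpLq}, which is exactly what you do. Your remark about uniformity in $\w$ and the identification $D((1-A)^{0})=X$ are both fine; no additional ingredients are needed.
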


Note that Theorem~\ref{thm:KreissHilbert} recovers the fact that, if the Kreiss condition \eqref{eq:Kreiss} holds, then the associated semigroup grows at most linearly. Since this linear growth is sharp up to possible polynomial $\veps$-loss (see~\cite{EisZwa06}), Theorem~\ref{thm:KreissHilbert} is also essentially sharp. Theorem~\ref{thm:KreissHilbert} was for the most part already contained in~\cite{Helffer-Sjostrand10}, with a better constant but proved without a connection to Fourier multipliers.  An abstract connection to operator-valued $(L^{p},L^{p})$ Fourier multipliers was made in~\cite{Latushkin-Yurov13}, although this connection is of a different nature than Theorem \ref{thm:KreissLpLq}. 

An advantage of the approach in this section, relying on Theorem \ref{thm:KreissLpLq} and $(L^{p},L^{q})$ Fourier multiplier theorems, is that it allows for concrete extensions of Theorem~\ref{thm:KreissHilbert} to more general Banach spaces and semigroups. For example, using the results from Section~\ref{sec:multipliers} and Lemma~\ref{lem:decay}, one obtains an $\alpha\geq0$ such that
\[
\|T(t)x\|_{X}\lesssim g(t)\|x\|_{D((1-A)^{\alpha})}
\]
for all $x\in D((1-A)^{\alpha})$ and $t\geq1$, where $\alpha$ depends on the geometry of $X$. In particular, although on general Banach spaces the semigroup $(T(t))_{t\geq0}$ may grow exponentially even if the Kreiss condition~\eqref{eq:Kreiss} holds, this condition does imply that orbits $t\mapsto T(t)x$ with $x\in D((1-A)^{\alpha})$ grow at most linearly for $\alpha>1$.

In fact, one can show that the conclusion of Theorem~\ref{thm:Hilbertpoly} also holds for positive semigroups on $L^{p}$ spaces or spaces of continuous functions, as well as for asymptotically analytic semigroups. This follows by combining Theorem \ref{thm:KreissLpLq} and Lemma \ref{lem:decay} with $(L^{p},L^{q})$ Fourier multiplier theorems which were not mentioned in Section \ref{sec:multipliers}. We refer to \cite{Rozendaal-Veraar18b} for details.

\section{Refined decay rates}\label{sec:refined}

So far, the common thread in this article has been to use $(L^{p},L^{q})$ Fourier multipliers to derive results about the long-term behavior of semigroups into which one can input information from Banach space geometry. However, we claim that another idea is implicit in the previous sections. Namely, by discerning the connection between the asymptotic theory of evolution equations on the one side, and harmonic analysis in Banach spaces on the other, one is sometimes able to better understand questions about asymptotics of semigroups, even when the relevant harmonic analysis and Banach space geometry are essentially trivial.

In this section we will discuss work on Hilbert spaces regarding Problem \ref{it:problem2}. Here the connection to $(L^{p},L^{q})$ Fourier multiplier theory does not appear explicitly, neither in the results nor in the proofs. Nonetheless, we will attempt to convey how the same paradigm as before plays a key role in the proof of the main result. 

\subsection{The main result}\label{subsec:main}

Throughout, as in Section~\ref{subsec:problem2}, we consider a uniformly bounded $C_{0}$-semigroup $(T(t))_{t\geq0}$ on a Hilbert space $X$, with generator $A$ satisfying $i\R\subseteq\rho(A)$. Until now, none of the results involved a priori growth assumptions on the semigroup; they only relied on spectral information about its generator. By contrast, here the uniform boundedness assumption will play a crucial role. 

Let 
\begin{equation}\label{eq:defM2}
M(\lambda)=\sup_{|\xi|\leq\lambda}\|R(i\xi,A)\|_{\La(X)},\quad\lambda\geq0,
\end{equation}
and suppose that $M(\lambda)\to\infty$ as $\lambda\to\infty$ (if this is not the case, then it follows from Theorem~\ref{thm:Fourierdecay} that $(T(t))_{t\geq0}$ is exponentially stable). To determine the asymptotic behavior of classical solutions to the associated abstract Cauchy problem, i.e.~orbits $t\mapsto T(t)x$ with $x\in D(A)$, we have to obtain decay rates for $\|T(t)A^{-1}\|_{\La(X)}$ as $t\to\infty$. With notation as in Section~\ref{subsec:problem2}, by \cite{Batty-Duyckaerts08}, there exist $c,C>0$ such that
\begin{equation}\label{eq:BattyDuyck2}
\frac{c}{M^{-1}(Ct)}\leq \|T(t)A^{-1}\|_{\La(X)}\leq \frac{C}{M_{\log}^{-1}(ct)}
\end{equation}
for sufficiently large $t$. The right-hand side of \eqref{eq:BattyDuyck2} is slightly larger than the left side, and the goal in this section is to determine when \eqref{eq:BattyDuyck2} can be improved to
\begin{equation}\label{eq:BoriTom}
\frac{c}{M^{-1}(Ct)}\leq \|T(t)A^{-1}\|_{\La(X)}\leq \frac{C}{M^{-1}(ct)}.
\end{equation}
The latter estimate would show, up to constants, exactly what the asymptotic behavior is which can be expected from a general classical solution. Borichev and Tomilov proved that \eqref{eq:BoriTom} holds if $M(\lambda)\eqsim \lambda^{\beta}$ for some $\beta\geq0$ and all $\lambda\geq1$ \cite{Borichev-Tomilov10}. On the other hand, it is known that \eqref{eq:BoriTom} need not hold if $M$ grows too slowly, for example if $M(\lambda)\eqsim \log(\lambda)$ for $\lambda\geq2$ (see~\cite[Example 5.2]{BaChTo16}). Hence it is natural to consider $M$ which do not differ much from a polynomial, as was done in~\cite{BaChTo16}. The relevant definition is as follows.

\begin{definition}\label{def:posinc}
Let $a\geq0$, and let $N:[a,\infty)\to(0,\infty)$ be measurable. For $\alpha\in\R$, the function $N$ is \emph{regularly varying (of index $\alpha$)} if 
\[
\lim_{s\to\infty}\frac{N(\lambda s)}{N(s)}=\lambda^{\alpha}
\]
for all $\lambda\geq1$. Moreover, $N$ has \emph{positive increase} if there exist $\alpha>0$, $c\in(0,1]$ and a positive $s_{0}\geq a$ such that
\begin{equation}\label{eq:alphapos}
\frac{N(\lambda s)}{N(s)}\geq c\lambda^{\alpha}
\end{equation}
for all $\lambda\geq1$ and $s\geq s_{0}$.
\end{definition}

Note that a function of positive increase tends to infinity at a rate that is at least polynomial. Moreover, any regularly varying function of index $\alpha>0$ has positive increase. On the other hand, not all functions of positive increase are regularly varying of positive index, as follows immediately by considering functions which grow faster than any polynomial, such as $N(s)=e^{\alpha s}$ for $\alpha>0$ and $s\geq0$. Functions of tempered growth that oscillate heavily can also have positive increase without being regularly varying of positive index, as is for example the case if $N(s)=s^{\alpha}(2+\sin(s))$ for $\alpha>0$ and $s\geq1$. For the results below, it is relevant to note that the notions differ even when considering non-decreasing functions of tempered growth; consider e.g.~$N(s)=s^{2+m(s)}$ for $m(s)=\sin(\log(\log(s)))$, $s\geq e$.

Asymptotically, regularly varying functions do not deviate much from polynomials. For example, any function of the form $N(s)=s^{\alpha}\log(s)^{\beta}$, for $\alpha,\beta\in\R$, is regularly varying. Functions of the latter kind were studied by Batty, Chill and Tomilov in \cite{BaChTo16}. They proved that, with notation as in \eqref{eq:defM2}, if $M$ satisfies $M(\lambda)\eqsim \lambda^{\alpha}\log(\lambda)^{\beta}$ for some $\alpha>0$ and $\beta\leq0$, then \eqref{eq:BoriTom} holds. They also obtained partial results for $\beta>0$. And they showed that, if $(T(t))_{t\geq0}$ is a normal semigroup, then~\eqref{eq:BoriTom} holds if and only\footnote{In fact, the characterization in \cite[Proposition 5.1]{BaChTo16} involves a condition which, at first sight, differs from \eqref{eq:alphapos}. However, it follows from \cite[Lemma 2.1]{RoSeSt19} that these conditions are equivalent.} if $M$ has positive increase. This brings us to the main result of this section, Theorem 1.1 from \cite{RoSeSt19}.

\begin{theorem}\label{thm:positive}
Let $(T(t))_{t\geq0}$ be a uniformly bounded $C_{0}$-semigroup with generator $A$ on a Hilbert space $X$ such that $i\R\subseteq\rho(A)$. Let $M:[0,\infty)\to(0,\infty)$ be a continuous non-decreasing function such that $\|R(i\xi,A)\|_{\La(X)}\leq M(|\xi|)$ for all $\xi\in\R$. If $M$ has positive increase, then there exist $C,t_{0}\geq 0$ such that
\begin{equation}\label{eq:BorTom2}
\|T(t)A^{-1}\|_{\La(X)}\leq \frac{C}{M^{-1}(t)}
\end{equation}
for all $t\geq t_{0}$.
\end{theorem}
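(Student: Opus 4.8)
The plan is to follow the paradigm of the preceding sections: reduce the decay estimate \eqref{eq:BorTom2} to a Fourier-analytic statement, and then exploit the Hilbert space structure via Plancherel. Concretely, fix $x\in D(A)$ with $\|x\|_{X}\leq 1$ and set $y:=Ax$, so that $x=A^{-1}y$ and we must show $\|T(t)A^{-1}y\|_{X}\lesssim M^{-1}(t)^{-1}\|y\|_{X}$ for large $t$. Following the approach in Section~\ref{subsec:problem1}, the orbit $t\mapsto T(t)A^{-1}y$ can be recovered from the resolvent $R(i\xi,A)A^{-1}$ via an inverse Fourier transform, at least after a smooth cutoff; the point is that $\|R(i\xi,A)A^{-1}\|_{\La(X)}$ decays like $M(|\xi|)^{-1}$ as $|\xi|\to\infty$ by the resolvent identity $R(i\xi,A)A^{-1}=\frac{1}{i\xi}(R(i\xi,A)-A^{-1})$ combined with the hypothesis $\|R(i\xi,A)\|_{\La(X)}\leq M(|\xi|)$ and the uniform boundedness of $M$ near zero (cf.\ the mechanism of Lemma~\ref{lem:decay}).

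The key step is a careful frequency decomposition. Choose a smooth partition of the frequency axis into a low-frequency part $|\xi|\lesssim R$ and a high-frequency part $|\xi|\gtrsim R$, where the cutoff scale $R=R(t)$ will be optimized at the end; the natural choice turns out to be $R\eqsim M^{-1}(t)$. On the low-frequency piece, one integrates by parts in \eqref{eq:inverseFourier} (or rather in its regularized version) using that $t\mapsto T(t)A^{-1}y$ is differentiable for $y\in X$ with derivative $T(t)y$; this produces a gain of $t^{-1}$ and a resolvent term that one controls on $|\xi|\leq R$ using $\|R(i\xi,A)\|_{\La(X)}\leq M(R)$ together with $M(R)\eqsim M(M^{-1}(t))\eqsim t$, so the low-frequency contribution is $\lesssim t^{-1}\cdot R\cdot M(R)\eqsim R\eqsim M^{-1}(t)^{-1}$ after using Plancherel to pass between the $L^2$ norm on a bounded frequency interval and the $X$-norm. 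On the high-frequency piece one uses the extra decay $\|R(i\xi,A)A^{-1}\|_{\La(X)}\lesssim \langle\xi\rangle^{-1}M(|\xi|)^{-1}$ and again Plancherel; here the positive increase hypothesis enters decisively, because it guarantees $M(|\xi|)^{-1}\lesssim (|\xi|/R)^{-\alpha}M(R)^{-1}$ for $|\xi|\geq R$, and this polynomial-type lower bound on the growth of $M$ makes the tail integral $\int_{|\xi|\geq R}\langle\xi\rangle^{-2}M(|\xi|)^{-2}\,d\xi$ converge with the right rate, yielding a contribution $\lesssim R^{-1}M(R)^{-1}\lesssim M^{-1}(t)^{-1}\cdot t^{-1}$, which is even better than needed.

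The main obstacle, and the reason \eqref{eq:BoriTom} fails for slowly growing $M$ such as $M(\lambda)\eqsim\log\lambda$, is precisely the convergence and the rate of the high-frequency tail integral: without positive increase the factor $M(|\xi|)^{-2}$ does not decay fast enough relative to its value at the cutoff scale, and one cannot close the estimate at the optimal scale $R\eqsim M^{-1}(t)$. Thus the heart of the argument is a quantitative lemma translating the positive increase condition \eqref{eq:alphapos} into the tail bound $\int_{R}^{\infty}s^{-2}M(s)^{-2}\,ds\lesssim R^{-1}M(R)^{-2}$, which is a routine consequence of \eqref{eq:alphapos} once one splits the integral dyadically over $[2^{k}R,2^{k+1}R]$ and sums the resulting geometric series in $2^{-k\alpha}$; one also needs a matching elementary fact that positive increase implies $M^{-1}$ exists with $M(M^{-1}(t))\eqsim t$ in the relevant range. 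A secondary technical point is justifying the contour-shift and inverse-Fourier representation for $y\in X$ rather than only on a fractional domain, which is handled by the standard regularization using $e^{-\varepsilon t}$ and $\varepsilon\to 0^{+}$ together with the uniform boundedness of the semigroup; none of this requires anything beyond what is available on a Hilbert space, so the harmonic analysis here is, as advertised, essentially just Plancherel's theorem combined with the right dyadic bookkeeping.
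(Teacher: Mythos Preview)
Your argument contains a fatal error in the resolvent decay estimate. From the resolvent identity you obtain $R(i\xi,A)A^{-1}=(i\xi)^{-1}(R(i\xi,A)+A^{-1})$, but this yields $\|R(i\xi,A)A^{-1}\|_{\La(X)}\lesssim|\xi|^{-1}M(|\xi|)$ with $M$ in the \emph{numerator}, not $\langle\xi\rangle^{-1}M(|\xi|)^{-1}$ as you claim. For $M(\lambda)\eqsim\lambda^{\beta}$ with $\beta>1$ the norm $\|R(i\xi,A)A^{-1}\|_{\La(X)}$ is not even bounded as $|\xi|\to\infty$, so your high-frequency tail integral should read $\int_{|\xi|\geq R}\langle\xi\rangle^{-2}M(|\xi|)^{2}\,d\xi$, which diverges. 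Your low-frequency bookkeeping is also off: the quantity $t^{-1}\cdot R\cdot M(R)$ with $R=M^{-1}(t)$ evaluates to $M^{-1}(t)$, which tends to infinity, not to $1/M^{-1}(t)$.

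More fundamentally, the scheme you describe is the Tauberian approach of Batty--Duyckaerts and Chill--Seifert (Section~\ref{subsec:ingredients}); even carried out correctly it only produces the weaker bound involving $M_{\log}^{-1}$. The paper's proof relies on two ingredients that are entirely absent from your sketch. First, the uniform boundedness of $(T(t))_{t\geq0}$ is used \emph{structurally}, not just for regularization, via the semigroup identity $T(\tau)A^{-1}x=\frac{k+1}{\tau^{k+1}}\int_{0}^{\tau}T(\tau-t)h_{k}(t)\,\ud t$ for the auxiliary functions $h_{k}=T^{*k}\ast A^{-1}f_{\tau}$, followed by a Cauchy--Schwarz step against the adjoint orbit. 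Second, Plancherel is applied not to $R(i\xi,A)A^{-1}$ but to the symbol $m_{k}(\xi)=\widehat{\ph_{R}}(\xi)R(i\xi,A)^{k}A^{-1}$ involving resolvent \emph{powers}; positive increase enters precisely here, on the \emph{low}-frequency side, to guarantee $\sup_{|\xi|\leq R}M(|\xi|)^{k}/|\xi|\lesssim M(R)^{k}/R$ for the fixed choice $k=\lceil 1/\alpha\rceil$. In the actual proof the high-frequency term is handled by integration by parts and does not see $M$ at all, which is the opposite of what you propose.
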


Note that \eqref{eq:BorTom2} differs slightly from the right-hand side of \eqref{eq:BoriTom}, since in \eqref{eq:BorTom2} no constant appears in front of $t$. However, the fact that $M^{-1}(ct)\eqsim M^{-1}(t)$ as $t\to\infty$, for any $c>0$, turns out to characterize the class of functions with positive increase (see \cite[Proposition 2.2]{RoSeSt19}). 

Theorem \ref{thm:positive} applies in particular to any regularly varying function of positive index, such as $M$ satisfying $M(\lambda)\eqsim \lambda^{\alpha}\log(\lambda)^{\beta}$ for $\alpha>0$ and $\beta\in\R$, for which the conclusion is not contained in \cite{BaChTo16} if $\beta>0$. Moreover, since the assumption of positive increase is also necessary for \eqref{eq:BorTom2} to hold if $M$ is as in \eqref{eq:defM2} and if $(T(t))_{t\geq0}$ is a normal semigroup (in fact, it is necessary for a larger class of semigroups), Theorem \ref{thm:positive} completely settles the conjecture of Batty and Duyckaerts, by determining for which $M$ \eqref{eq:BattyDuyck2} can be improved to \eqref{eq:BoriTom}. 

One can obtain improvements over \eqref{eq:BattyDuyck2} for a class of functions which are not of positive increase. Moreover, if $(T(t))_{t\geq0}$ is a normal semigroup, then for all $\veps>0$ one has
\[
\frac{1-\veps}{M_{\max}^{-1}(t)}\leq \|T(t)A^{-1}\|_{\La(X)}\leq \frac{1}{M_{\max}^{-1}(t)}
\]
for sufficiently large $t$, where $M_{\max}(\lambda):=\max_{1\leq s\leq \lambda}M(\lambda/s)\log(s)$ for $\lambda\geq1$. We also note that one encounters semigroup generators with non-polynomial resolvent growth of positive increase when studying wave equations with viscoelastic damping. However, in the remainder we will focus on Theorem~\ref{thm:positive}, and we refer to \cite{RoSeSt19} for these additional results.

\subsection{Ingredients of the proof}\label{subsec:ingredients}

In this subsection we will discuss the following ingredients of the proof of Theorem \ref{thm:positive}: 
\begin{itemize}
\item Uniform boundedness of the semigroup;
\item The semigroup property and Banach space geometry;
\item Tauberian theory.
\end{itemize}

\subsubsection{Uniform boundedness} 

To understand why Theorem \ref{thm:positive} cannot be proved using the same techniques as in Sections~\ref{sec:exponential}, \ref{sec:polynomial} and \ref{sec:growth}, it is illustrative to examine again Theorem~\ref{thm:Hilbertpoly}. That result holds without any a priori assumption on the growth of the semigroup $(T(t))_{t\geq0}$, and for each $s\geq0$ one has 
\begin{equation}\label{eq:Hilbertdecay2}
\|T(t)(-A)^{-(s+1)\beta}\|_{\La(X)}\lesssim t^{-s},\quad t\geq1.
\end{equation}
On the other hand, if we additionally assume that $(T(t))_{t\geq0}$ is uniformly bounded, then one can interpolate \eqref{eq:Hilbertdecay2} with this uniform boundedness to obtain
\begin{equation}\label{eq:BaEnPrSn}
\|T(t)A^{-1}\|_{\La(X)}\lesssim t^{-\frac{1}{\beta}+\frac{1}{(s+1)\beta}},\quad t\geq1.
\end{equation}
The latter estimate improves as $s\to\infty$, but for each $s>0$ it is weaker than \eqref{eq:BattyDuyck2}. In fact, letting $s\to\infty$ in \eqref{eq:BaEnPrSn} yields the main result of~\cite{BaEnPrSc06}, and here one can in fact rely on a weaker version of \eqref{eq:Hilbertdecay2} which also holds on general Banach spaces. On the other hand, Theorem~\ref{thm:Hilbertpoly} is essentially sharp, as follows from an example involving an unbounded semigroup in \cite[Example 4.20]{Rozendaal-Veraar18a}. This means that the assumption that $(T(t))_{t\geq0}$ is uniformly bounded must play a key role in Theorem \ref{thm:positive}, and it cannot be taken into account solely through interpolation with an estimate which also holds for unbounded semigroups.  

Next, let us briefly examine the method of proof that was used in the previous sections. There the crucial connection to Fourier multiplier theory was obtained by applying a Fourier multiplier involving the resolvent to the function $f$ from \eqref{eq:deff}:
\begin{equation}\label{eq:deff2}
f(t):=\begin{cases}
e^{-\wt{w} t}T(t)x&t\geq0,\\
0&t<0,
\end{cases}
\end{equation}
where $\wt{\w}$ is sufficiently large that $(e^{-\wt{\w}t}T(t))_{t\geq0}$ is exponentially stable, and $x\in X$. Since such an $\wt{\w}$ exists for any $C_{0}$-semigroup $(T(t))_{t\geq0}$, we were able to derive statements that depended only on spectral properties of the generator. On the other hand, an inherent drawback of this method is that it does not take into account additional asymptotic information about the semigroup, such as its uniform boundedness.

\subsubsection{Semigroups and Banach space geometry}

A crucial technique for the proof of Theorem \ref{thm:positive} is contained in the proof of \cite[Theorem 4.7]{BaChTo16}. There, for $\tau>0$ fixed, \eqref{eq:deff2} is replaced by a function $f_{\tau}:\R\to X$, defined as follows:
\begin{equation}\label{eq:defftau}
f_{\tau}(t):=\begin{cases}
T(t)x&0\leq t\leq \tau,\\
0&\text{otherwise}.
\end{cases}
\end{equation}
Note that $f_{\tau}$ is only uniformly bounded in $\tau>0$ due to the uniform boundedness assumption on $(T(t))_{t\geq0}$. Hence, albeit in an implicit manner, the uniform boundedness assumption is already embedded in $f_{\tau}$, unlike in~\eqref{eq:deff2}. 

Let $a>0$ and write $f_{\tau,a}(t):=e^{-at}f_{\tau}(t)$ for $t\in\R$. Let $B:D(A)\to X$ commute with $(T(t))_{t\geq0}$ and be such that
\begin{equation}\label{eq:propertyB}
C_{B}:=\sup_{\Real(\lambda)>0}\|BR(\lambda,A)\|_{\La(X)}<\infty.
\end{equation} 
For $\xi\in\R$, set 
\begin{equation}\label{eq:keymult}
m_{a}(\xi):=R(a+i\xi,A)B.
\end{equation} 
Then
\begin{equation}\label{eq:h1}
T_{m_{a}}(f_{\tau,a})(t)=\int_{0}^{\tau}e^{-at}T(t)Be^{a(\tau-t)}f_{\tau}(t)\ud t=\begin{cases}
tBT(t)x&0\leq t\leq \tau,\\
\tau BT(t)x&t>\tau,\\
0&t<0,
\end{cases}
\end{equation}
by \eqref{eq:Laplace}.

From here on we simply write $\|\cdot\|$ for $\|\cdot\|_{X}$. Use \eqref{eq:propertyB} and Plancherel's theorem:
\begin{align*}
\int_{0}^{\tau}e^{-2at}\|tBT(t)x\|^{2}\ud t&\leq \|T_{m_{a}}(f_{\tau,a})\|_{L^{2}(\R;X)}^{2}\leq C_{B}^{2}\|f_{\tau,a}\|_{L^{2}(\R;X)}\\
&=C_{B}^{2}\int_{0}^{\tau}e^{-2at}\|T(t)x\|^{2}\ud t.
\end{align*}
Let $a\to 0$ and use that 
\begin{equation}\label{eq:unifbound}
K:=\sup_{t\geq0}\|T(t)\|_{\La(X)}<\infty,
\end{equation}
to arrive at
\[
\frac{1}{\tau}\int_{0}^{\tau}t^{2}\|BT(t)x\|^{2}\ud t\leq C_{B}^{2}K^{2}\|x\|^{2}.
\]
Now the semigroup property yields
\begin{equation}\label{eq:semigroup}
T(\tau)Bx=\frac{2}{\tau^{2}}\int_{0}^{\tau}tT(\tau-t)T(t)Bx\,\ud t,
\end{equation}
so that one can apply the Cauchy-Schwarz inequality:
\begin{align*}
|\lb \tau T(\tau)Bx,y\rb|&=\Big|\lb \frac{2}{\tau}\int_{0}^{\tau}tT(\tau-t)T(t)Bx\,\ud t,y\rb\Big|=\Big|\frac{2}{\tau}\int_{0}^{\tau}t\lb T(t)Bx,T^{*}(\tau-t)y\rb\ud t\Big|\\
&\leq \Big(\frac{2}{\tau}\int_{0}^{\tau}t^{2}\| T(t)Bx\|^{2}\,\ud t\Big)^{1/2}\Big(\frac{2}{\tau}\int_{0}^{\tau}\|T^{*}(\tau-t)y\|^{2}\,\ud t\Big)^{1/2}\\
&\leq \sqrt{2}K\|y\|\Big(\frac{2}{\tau}\int_{0}^{\tau}t^{2}\| T(t)Bx\|^{2}\,\ud t\Big)^{1/2}\leq 2C_{B}K^{2}\|x\|\|y\|,
\end{align*}
for $y\in X$. Hence $\|T(\tau)Bx\|\leq 2C_{B}K^{2}\|x\|/\tau$. Now, if $M(\lambda)\eqsim \lambda^{\beta}$, then Proposition \ref{prop:decay2} shows that \eqref{eq:propertyB} holds with $B=A^{-\beta}$, and one obtains $\|T(t)A^{-\beta}\|_{\La(X)}\lesssim \tau^{-1}$. Then interpolation with the uniform boundedness of $(T(t))_{t\geq0}$, or suitable extrapolation, recovers the main result of~\cite{Borichev-Tomilov10}: $\|T(\tau)A^{-1}\|_{\La(X)}\lesssim \tau^{-1/\beta}$ as $\tau\to\infty$.

This proof, which is different from the original proof in~\cite{Borichev-Tomilov10}, is relevant for us for three reasons. Firstly, it incorporates the geometry of the underlying Banach space, through Plancherel's theorem. Recall that this is important, because \eqref{eq:BattyDuyck2} is sharp on general Banach spaces. Secondly, it uses the uniform boundedness of the semigroup in a clear manner, both encoded in the function $f_{\tau}$ from \eqref{eq:defftau} and when applying the Cauchy-Schwarz inequality. This is important because \eqref{eq:BoriTom} does not hold for general unbounded semigroups. Finally, it explicitly uses the semigroup property, in \eqref{eq:semigroup}. This is also important, because a version of~\eqref{eq:BattyDuyck2} holds for more general $X$-valued functions, and it is known that the analogue of \eqref{eq:BorTom2} for such functions fails, even on Hilbert spaces.

However, there is one main drawback to this proof. Namely, one has to find $B$ satisfying \eqref{eq:propertyB}. Indeed, one cannot directly apply Plancherel's theorem to the resolvent, which is unbounded by assumption. In the case of polynomial $M$ one can rely on Proposition \ref{prop:decay2}, and then use interpolation to obtain \eqref{eq:BorTom2}, but for more general $M$ it is not clear how to proceed. In~\cite{BaChTo16}, advanced functional calculus tools were developed to deal with this problem for specific $M$. However, it is not known whether such an approach is viable for general $M$ of positive increase.

\subsubsection{Tauberian theory}

By contrast, \eqref{eq:BattyDuyck2} was obtained by Batty and Duyckaerts without any assumptions on $M$. Hence it is natural for us to examine their proof, to determine how the problematic unboundedness of the resolvent is dealt with. In fact, we will discuss a modification by Chill and Seifert of their approach \cite{Chill-Seifert16}. Whereas Batty and Duyckaerts relied on contour integration in~\cite{Batty-Duyckaerts08}, in~\cite{Chill-Seifert16} the Tauberian approach was cast into a framework which is more directly amenable to Fourier analytic techniques.

When applied to the semigroup orbit
\[
g(t):=\begin{cases}
T(t)A^{-1}x&t\geq0,\\
0&t<0,
\end{cases}
\]
for $x\in X$, the proof of \cite[Theorem 2.1]{Chill-Seifert16} uses the decomposition $g=\ph_{R}\ast g+(\delta-\ph_{R})\ast g$ for $R>0$. Here $\delta$ is Dirac mass at zero, and $\ph\in L^{1}(\R)$ is such that $0\leq \wh{\ph}\leq 1$, $\wh{\ph}(\xi)=1$ for $|\xi|\leq 1/2$, and $\wh{\ph}(\xi)=0$ for $|\xi|\geq 1$. Then $\wh{\ph_{R}}(\xi):=\wh{\ph}(\xi/R)$ cuts off to the region $\{\xi\in\Rn\mid|\xi|\leq R\}$.

The strategy is now to write
\begin{equation}\label{eq:decomp}
\|g(t)\|\leq \frac{1}{R}\big(R\|\ph_{R}\ast g\|+R\|(\delta-\ph_{R})\ast g\|\big).
\end{equation}
Then one chooses, for each $t>0$, an $R>0$ such that both of the terms in brackets are bounded independent of $t$ and $R$. As a consequence, $\|g(t)\|$ decays at rate $1/R$. Therefore, to prove the second inequality in \eqref{eq:BattyDuyck2}, one needs to show that the choice $R=M_{\log}^{-1}(ct)$ is allowed, for some $c>0$.
 
For the second term in brackets in \eqref{eq:decomp}, an integration by parts argument yields $\|(1-\ph_{R})\ast g\|\lesssim 1/R$. We will not go into detail here, although it is relevant to note that the resolvent growth rate $M$ does not play a role in this part of the argument. 
 
For the first term in brackets in \eqref{eq:decomp}, it follows from \eqref{eq:Laplace} that
\begin{equation}\label{eq:splitting}
\ph_{R}\ast g(t)=T_{\wh{\ph_{R}}}(g)(t)=\frac{1}{2\pi}\int_{\R}e^{it\xi}\wh{\ph_{R}}(\xi)R(i\xi,A)A^{-1}x\,\ud \xi
\end{equation}
for $t\in\R$. From this point on one integrates by parts successively, using estimates of the form
\begin{equation}\label{eq:absolute}
\int_{\R}\|\wh{\ph_{R}}(\xi)R(i\xi,A)^{k+1}\|_{\La(X)}\ud \xi\lesssim M(R)^{k+1}
\end{equation}
for an additional parameter $k\in\N$. By making a judicial choice of $k$, dependent on $t$ and $M(R)$, and using Stirling's formula, one arrives at the expression
\[
\|g(t)\|\lesssim \frac{1}{R}\big((1+R)^{2}M(R)^{2}e^{-2ct/M(R)}+1\big)
\]
for a suitable $c>0$. Then, for sufficiently large $t$, one may let $R=M_{\log}^{-1}(ct)$.

We briefly discuss some aspects of this proof. Firstly, the term $(1-\ph_{R})\ast g$ corresponds to the high-frequency tail of the resolvent, which involves $R(i\xi,A)$ for $|\xi|\geq R$. This term was problematic in the Hilbert space argument above, because it is not directly accessible via Plancherel's theorem. Here it is taken care of by integrating by parts. 

On the other hand, the penultimate term in \eqref{eq:splitting} contains the low frequencies of $R(i\cdot,A)$. Since $\|\wh{\ph_{R}}(\xi)R(i\xi,A)\|_{\La(X)}\leq M(R)$ by assumption,  Plancherel's theorem might conceivably be applied to this term. In particular, up to a shift by $a$ and the operator $B$, this is the low-frequency part of the symbol in~\eqref{eq:keymult}. Unfortunately, the term $T_{\wh{\ph_{R}}}(g)$ is a Fourier multiplier applied to a semigroup orbit, as opposed to a Fourier multiplier involving the resolvent applied to another function. Hence it is not clear how to apply Plancherel's theorem. Instead, one relies on absolute value estimates as in~\eqref{eq:absolute}.

\subsection{Combining the ingredients}\label{subsec:combine}

We will now indicate how one can combine these ingredients and prove Theorem \ref{thm:positive}.

\subsubsection{Fourier multipliers}

Crude as an absolute value estimate as in~\eqref{eq:absolute} might seem, it in fact brings us back to $(L^{p},L^{q})$ Fourier multiplier theory. Indeed, for any $m\in L^{1}(\R;\La(X))$ and $f\in L^{1}(\Rn;X)$, one has
\begin{align*}
\|T_{m}(f)\|_{L^{\infty}(\R;X)}&\leq \|m\wh{f}\,\|_{L^{1}(\R;X)}\leq \|m\|_{L^{1}(\R;\La(X))}\|\wh{f}\,\|_{L^{\infty}(\R;X)}\\
&\leq \|m\|_{L^{1}(\R;\La(X))}\|f\|_{L^{1}(\R;X)}.
\end{align*}
So each $m\in L^{1}(\R;\La(X))$ is an $(L^{1}(\R;X),L^{\infty}(\R;X))$ Fourier multiplier. Hence, up to the factor $k$, the shift by $a$ and the operator $B$, \eqref{eq:absolute} is in fact an $(L^{1},L^{\infty})$ multiplier estimate for the low-frequency part of the symbol~\eqref{eq:keymult}, which in turn played a key role in \cite{BaChTo16}. To make the final connection to the previous sections, note from Proposition~\ref{prop:Fouriertype} and Theorem~\ref{thm:typecotype} (see also the remark preceding Proposition \ref{prop:decay2}) that on general Banach spaces one can typically only obtain $(L^{1},L^{\infty})$ multiplier estimates. Since \eqref{eq:BattyDuyck2} holds on general Banach spaces, one might indeed expect to encounter $(L^{1},L^{\infty})$ multiplier estimates as in \eqref{eq:absolute}.

With these observations in the back of our mind, we will now give a rough sketch of the proof of Theorem \ref{thm:positive}. 

\begin{proof}[Proof of Theorem \ref{thm:positive}]
Fix $\tau>0$. One again makes use of additional parameters $R$ and $k$, where $R$ will be used to split into low and high frequencies. On the other hand, this time the parameter $k$ is used to introduce a sequence of functions $h_{k}:\R\to X$. In fact, set $h_{0}:=A^{-1}f_{\tau}$, with $f_{\tau}$ as in \eqref{eq:defftau}, and $h_{k}:=T^{*k}\ast h_{0}$ for $k\geq1$. Note that $h_{1}$ already appeared in \eqref{eq:h1}. Moreover, just as was the case in \eqref{eq:semigroup} for $k=1$, the semigroup property yields
\[
T(\tau)A^{-1}x=\frac{k+1}{\tau^{k+1}}\int_{0}^{\tau}T(\tau-t)h_{k}(t)\ud t.
\]
Now write $h_{k}=\ph_{R}\ast h_{k}+(\delta-\ph_{R})\ast h_{k}$, leading to an expression as in \eqref{eq:decomp}, with $g(t)$ replaced by $T(\tau)A^{-1}x$. The high-frequency term, involving $(\delta-\ph_{R})\ast h_{k}$, is dealt with using a similar integration by parts argument as before.

 For the low-frequency term, involving $\ph_{R}\ast h_{k}$, one uses the Cauchy--Schwarz inequality and the uniform boundedness of the semigroup:
\[
\frac{k+1}{\tau^{k+1}}\int_{0}^{\tau}\|T(\tau-t)(\ph_{R}\ast h_{k})(t)\|\ud t\leq K\frac{k+1}{\tau^{k+1/2}}\|\ph_{R}\ast h_{k}\|_{L^{2}(\R;X)},
\]
where $K$ is as in \eqref{eq:unifbound}. Now one can apply Fourier multiplier theory, by observing that $\ph_{R}\ast h_{k}=k!T_{m_{k}}(Ah_{0})$ for 
\[
m_{k}(\xi):=\widehat{\ph_{R}}(\xi)R(i\xi,A)^{k}A^{-1},\quad\xi\in\R.
\]
In passing we note that, with $B=A^{-1}$ and up to the shift by $a$,  $m_{1}$ is the low-frequency part of the symbol in \eqref{eq:Fouriermult}. Either way, since $\|Ah_{0}\|_{L^{2}(\R;X)}\leq K\tau^{1/2}$, we can use that $X$ is a Hilbert space to apply an $(L^{2},L^{2})$ multiplier estimate:
\begin{equation}\label{eq:L2L2}
\frac{k+1}{\tau^{k+1}}\int_{0}^{\tau}\|T(\tau-t)(\ph_{R}\ast h_{k})(t)\|\ud t\leq K^{2}\frac{(k+1)!}{\tau^{k}}\|m_{k}\|_{L^{\infty}(\R;\La(X))}.
\end{equation}
At this point the assumption of positive increase on $M$ comes in. More precisely, in terms of the parameters $\alpha$ and $c$ in \eqref{eq:alphapos}, one can set $k:=\lceil 1/\alpha\rceil$ and use straightforward supremum estimates to obtain
\begin{equation}\label{eq:almost1}
\frac{k+1}{\tau^{k+1}}\int_{0}^{\tau}\|T(\tau-t)(\ph_{R}\ast h_{k})(t)\|\ud t\lesssim \frac{1}{R}(k+1)!\Big(\frac{M(R)}{c\tau}\Big)^{k}\|x\|
\end{equation}
for large $R$. By combining this with the high-frequency estimate, one arrives at
\begin{equation}\label{eq:almost2}
\|T(\tau)A^{-1}x\|\lesssim \frac{1}{R}\Big(\Big(\frac{M(R)}{c\tau}\Big)^{k}+1\Big)\|x\|.
\end{equation}
Note that, in going from \eqref{eq:almost1} to \eqref{eq:almost2}, we used that $k$ is fixed, independent of $\tau$ and $R$, unlike in the proof of \eqref{eq:BattyDuyck2}. This in fact also simplifies the integration by parts procedure for the high-frequency term. 

To conclude the proof one sets $R=M^{-1}(c\tau)$ in \eqref{eq:almost2}, for large enough $\tau$, and uses that $M^{-1}(c\tau)\eqsim M^{-1}(\tau)$ as $\tau\to\infty$, again because $M$ has positive increase.
\end{proof}

We refer to \cite{RoSeSt19} more details on steps which we have left out, such as the integration by parts argument for the high-frequency term. 

Note that the proof involves a combination of techniques and ideas which were already present in \cite{Batty-Duyckaerts08,BaChTo16,Chill-Seifert16}. Note also that Fourier multiplier theory played a crucial role, in terms of an $(L^{2},L^{2})$ multiplier estimate in \eqref{eq:L2L2}. Although in this specific instance the use of $(L^{p},L^{q})$ Fourier multiplier theory is less direct than in the previous sections, this theory provided us with a bridge between disparate techniques, connecting them and leading to a proof of Theorem \ref{thm:positive}.

\appendix

\section{Functional calculus for $C_{0}$-groups}\label{sec:calctypecotype}

The theory of functional calculus deals with mappings $f\mapsto f(A)$, for suitable functions $f$ on subsets of $\C$ and operators $A$ on a Banach space $X$. One typically wants to relate properties of the operator $f(A)$ to those of the function $f$. 

For example, suppose that $-iA$ generates a $C_{0}$-group $(U(t))_{t\in\R}$ on a Banach space $X$, and let $M,\w'\geq0$ be such that $\|U(s)\|_{\La(X)}\leq Me^{\w'|s|}$ for all $s\in\R$. For $\w>\w'$, let $\HT^{\infty}(\St_{\w})$ be the space of bounded holomorphic functions on the strip
\begin{equation}\label{eq:strip}
\St_{\w}:=\{z\in\C\mid \abs{\Imag(z)}<\w\}.
\end{equation}
Let $f\in\HT^{\infty}(\St_{\w})$. Then $f(A)$ can be defined in a natural manner as an unbounded operator on $X$ (see \cite{Haase06a}), and one would like to know when $f(A)$ is in fact bounded. If this is the case for all $f\in\HT^{\infty}(\St_{\w})$, then one says that $A$ has a \emph{bounded $\HT^{\infty}(\St_{\w})$ functional calculus}. This property has applications to maximal regularity (see \cite{Arendt04,Kalton-Weis04,Kunstmann-Weis04}), and it played a key role in the resolution of the Kato square root conjecture \cite{AuHoLaMcITc02}.

It turns out that the results on $(L^{p},L^{q})$ Fourier multipliers from Section \ref{sec:multipliers} also have applications to functional calculus theory. Here we will give such an application, and we indicate how the resulting functional calculus statements can be used to prove stability of numerical approximation schemes for evolution equations. 

For background on the theory of holomorphic functional calculus we refer to \cite{Haase06a}. See also \cite{BaGoTo21a,BaGoTo21b,BaGoTo21c} for recent developments involving a different approach to some of the results mentioned below.

\subsection{Functional calculus and Fourier multipliers}

An effective technique in functional calculus theory involves \emph{transference principles} (see \cite{BeGiMu89,Coifman-Weiss76,Cowling83,Haase09,Haase11}). At the heart of a transference principle lies a commutative diagram of the form
\begin{equation}\label{eq:transferenceCD}
\begin{CD}
F_{1}	@>S_{f}>>	F_{2}\\
@A\iota AA										@VVPV\\
Y			 		@>f(A)>>			Z
\end{CD}
\end{equation}
where $Y$ and $Z$ are subspaces of $X$, $F_{1}$ and $F_{2}$ are function spaces with values in suitable Banach spaces, and $\iota$ and $P$ are bounded. Given such a diagram, $f(A)$ is bounded if $S_{f}$ is. In \cite{Haase09,Haase13,Haase-Rozendaal13} transference principles were applied with $Y=Z=X$ and $F_{1}=F_{2}=L^{p}(\R;X)$ for $p\in(1,\infty)$, while \cite{Haase-Rozendaal16} considered real interpolation spaces $Y=Z=(D(A),X)_{\theta,q}$, with $F_{1}=F_{2}=B^{\theta}_{p,q}(\R;X)$, for $p\in[1,\infty)$, $\theta\in[0,1]$ and $q\in[1,\infty]$. By letting $S_{f}=T_{m_{f}}$ be the Fourier multiplier with a symbol $m_{f}$ which is related to $f$, one then reduces problems in functional calculus theory to questions about operator-valued Fourier multipliers. In particular, one can rely on Plancherel's theorem if $X$ is a Hilbert space, and on versions of Mikhlin's theorem if $p\in(1,\infty)$ and $X$ is a UMD space, or when working on Besov spaces.

When combining transference principles for $C_{0}$-groups with Mikhlin's theorem, as in \cite{Haase09}, one has to restrict to the following function algebra:
\begin{equation}\label{eq:mikhlinalg}
\HT^{\infty}_{1}(\St_{\w}):=\{f\in\HT^{\infty}(\St_{\w})\mid \sup_{z\in\St_{\w}}(1+|z|)|f'(z)|<\infty\},
\end{equation}
endowed with the norm $\|f\|_{\HT^{\infty}_{1}(\St_{\w})}:=\sup_{z\in\St_{\w}}|f(z)|+(1+|z|)|f'(z)|<\infty$. 
If $A$ has a bounded $\HT^{\infty}_{1}(\St_{\w})$ calculus, then the resulting norm bound for $f(A)$ involves $\|f\|_{\HT^{\infty}_{1}(\St_{\w})}$. However, for certain applications, such as the stability of numerical approximation schemes, one requires norm bounds for $f(A)$ in terms of the supremum norm of $f$, for suitable $f\in\HT^{\infty}(\St_{\w})$. It is not clear whether the theory of $(L^{p},L^{p})$ multipliers allows for such applications when $X$ is not a Hilbert space. 

It should be noted that, if $A$ has a bounded $\HT^{1}(\St_{\w})$ calculus, then it also has a bounded $\HT^{\infty}(V_{\ph,\w})$ calculus. Here $V_{\ph,\theta}:=\St_{\w}\cup\Sigma_{\ph}$, for $\ph\in(0,\pi/2)$, is the union of $\St_{\w}$ with a double sector $\Sigma_{\ph}$. However, the associated norm bounds are still not sufficient for certain applications.

Now, the Fourier multiplier results in Section \ref{sec:multipliers} only involve supremum bounds (or $R$-bounds) of a given symbol. In fact, by combining \eqref{eq:Besov} and the Kahane contraction principle, it follows that
\begin{equation}\label{eq:multipliercalc}
T_{m}:B^{\frac{1}{p}-\frac{1}{q}}_{p,1}(\R;X)\to L^{q}(\R;X)
\end{equation}
is bounded for all $m\in L^{\infty}(\R)$, if $X$ has type $p\in[1,2]$ and cotype $q\in[2,\infty]$. 

\subsection{Concrete results on functional calculus}

To apply \eqref{eq:multipliercalc} to functional calculus theory, we note that a weighted version of the group orbit $s\mapsto U(-s)x$ is contained in $B^{\frac{1}{p}-\frac{1}{q}}_{p,1}(\R;X)$, if $q<\infty$ and $x\in (D(A),X)_{\frac{1}{p}-\frac{1}{q},p}$ (see \cite{Haase-Rozendaal16}). Here, as above, $-iA$ generates a $C_{0}$-group $(U(t))_{t\in\R}$. One thus obtains a bounded operator 
\[
\iota:(D(A),X)_{\frac{1}{p}-\frac{1}{q},1}\to B^{\frac{1}{p}-\frac{1}{q}}_{p,1}(\R;X).
\]
 Moreover, there is a bounded projection $P:L^{q}(\R;X)\to X$ such that \eqref{eq:transferenceCD} holds for each $f\in \HT^{\infty}(\St_{\w})$, where $F_{1}=B^{\frac{1}{p}-\frac{1}{q}}_{p,1}(\R;X)$, $F_{2}=L^{q}(\R;X)$, and $S_{f}=T_{m_{f}}$ for a symbol $m_{f}\in L^{\infty}(\Rn)$ satisfying $\|m_{f}\|_{L^{\infty}(\R)}\leq \|f\|_{\HT^{\infty}(\St_{\w})}$.

By combining \eqref{eq:multipliercalc} with this specific instance of \eqref{eq:transferenceCD}, one obtains the following theorem, the main result of \cite{Rozendaal19}.

\begin{theorem}\label{thm:calctype}
Let $-iA$ generate a $C_{0}$-group on a Banach space $X$ with type $p\in[1,2]$ and cotype $[2,\infty)$, and let $M,\w'\geq0$ be such that $\|U(s)\|_{\La(X)}\leq Me^{\w'|s|}$ for all $s\in\R$. Then, for each $\w>\w'$, there exists a $C\geq0$ such that $(D(A),X)_{\frac{1}{p}-\frac{1}{q},1}\subseteq D(f(A))$ and
\[
\|f(A)x\|_{X}\leq C\|x\|_{(D(A),X)_{\frac{1}{p}-\frac{1}{q},1}}
\]
for all $f\in \HT^{\infty}(\St_{\w})$ and $x\in (D(A),X)_{\frac{1}{p}-\frac{1}{q},1}$.
\end{theorem}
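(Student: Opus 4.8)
The plan is to prove this by a \emph{transference principle}: realize $f(A)$ as the bottom arrow of a commutative diagram of the form \eqref{eq:transferenceCD} whose top arrow is a Fourier multiplier to which the results of Section~\ref{sec:multipliers} apply. Concretely, take $F_{1}=B^{\frac{1}{p}-\frac{1}{q}}_{p,1}(\R;X)$, $F_{2}=L^{q}(\R;X)$, $Y=(\D(A),X)_{\frac{1}{p}-\frac{1}{q},1}$, $Z=X$, with $\iota\colon Y\to F_{1}$ sending $x$ to a suitably weighted version of the orbit $s\mapsto U(-s)x$, with $P\colon L^{q}(\R;X)\to X$ a bounded averaging operator, and with $S_{f}=T_{m_{f}}$ the Fourier multiplier for a symbol $m_{f}\in L^{\infty}(\R)$ built from $f$ and satisfying $\|m_{f}\|_{L^{\infty}(\R)}\leq\|f\|_{\HT^{\infty}(\St_{\w})}$. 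Granting that $P\circ T_{m_{f}}\circ\iota=f(A)$, the estimate is immediate: since $X$ has type $p$ and cotype $q$, \eqref{eq:multipliercalc} gives $\|T_{m_{f}}\|_{\La(F_{1},F_{2})}\lesssim\|m_{f}\|_{L^{\infty}(\R)}\leq\|f\|_{\HT^{\infty}(\St_{\w})}$, whence
\[
\|f(A)x\|_{X}=\|P\,T_{m_{f}}\,\iota x\|_{X}\leq\|P\|\,\|T_{m_{f}}\|_{\La(F_{1},F_{2})}\,\|\iota x\|_{F_{1}}\lesssim\|f\|_{\HT^{\infty}(\St_{\w})}\,\|x\|_{(\D(A),X)_{\frac{1}{p}-\frac{1}{q},1}},
\]
and in particular $(\D(A),X)_{\frac{1}{p}-\frac{1}{q},1}\subseteq\D(f(A))$. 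It is worth stressing that the geometry of $X$ enters \emph{only} at the step \eqref{eq:multipliercalc}, hence only through the $\gamma$-space machinery underlying Theorem~\ref{thm:Besov}; every other part of the argument is insensitive to the Banach space.

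The work therefore lies in assembling the three ingredients of \eqref{eq:transferenceCD}. First, the embedding $\iota$: since $\|U(s)\|_{\La(X)}\lesssim e^{\w|s|}$ for the chosen $\w>\w'$, the relevant fact (from \cite{Haase-Rozendaal16}) is that a weighted orbit $s\mapsto\rho(s)U(-s)x$, with $\rho$ an exponential weight adapted to $\w$, defines an element of $B^{\frac{1}{p}-\frac{1}{q}}_{p,1}(\R;X)$ of norm $\lesssim\|x\|_{(\D(A),X)_{\frac{1}{p}-\frac{1}{q},1}}$; here the admissible range $\frac{1}{p}-\frac{1}{q}\in[0,1)$ (valid because $p\leq 2\leq q$), the identification of real interpolation spaces with Besov-type regularity of group orbits, and the condition $q<\infty$ are all used, the passage from a $(\,\cdot\,,\,\cdot\,)_{\theta,\infty}$-statement to a $(\,\cdot\,,\,\cdot\,)_{\theta,1}$-statement being exactly the dyadic summation built into the $B^{\theta}_{p,1}$-norm. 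Second, the symbol $m_{f}$: one writes $f(A)$, through the holomorphic functional calculus on the strip, as a Phillips-type integral against the group, and extracts $m_{f}$ essentially as a trace of $f$ on a horizontal line inside $\St_{\w}$ (a Poisson average), arranged so that $T_{m_{f}}$ implements this integral after the weighting; the bound $\|m_{f}\|_{L^{\infty}(\R)}\leq\|f\|_{\HT^{\infty}(\St_{\w})}$ is then the maximum principle. Third, the projection $P\colon L^{q}(\R;X)\to X$, an operator of the form $g\mapsto\int_{\R}\psi(s)U(s)g(s)\,\ud s$ for a fixed $\psi$, bounded because $q<\infty$ and because of the exponential group bound, and chosen so that $P\circ T_{m_{f}}\circ\iota=f(A)$ on a core. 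The last two points are standard transference computations once the first is in place, and all three are carried out in \cite{Haase-Rozendaal16,Rozendaal19}.

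I expect the main obstacle to be making the symbol construction compatible with the \emph{a priori} meaning of $f(A)$ for a general $f\in\HT^{\infty}(\St_{\w})$: the diagram is naturally verified only for $f$ in a regularizing subalgebra such as $\HT^{\infty}_{1}(\St_{\w})$, where $m_{f}$ is visibly a decent multiplier, whereas $\HT^{\infty}(\St_{\w})$ is strictly larger and $f(A)$ must be understood through the extended calculus. The way around this is the usual one: fix a regularizer $e\in\HT^{\infty}(\St_{\w})$ with $e(A)$ injective and range a core, establish the diagram (hence the estimate) for $e$ and for $ef$, and recover $f(A)=e(A)^{-1}(ef)(A)$ on $Y$ using the uniform bound $\|(ef)(A)x\|_{X}\lesssim\|ef\|_{\HT^{\infty}(\St_{\w})}\|x\|_{Y}\lesssim\|f\|_{\HT^{\infty}(\St_{\w})}\|x\|_{Y}$ together with the boundedness of $\iota$ to control the limits in $X$; closedness of $f(A)$ then yields $Y\subseteq\D(f(A))$ and the asserted estimate. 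The remaining ingredients --- the Besov regularity of weighted group orbits and the boundedness of the averaging operators $\iota$ and $P$ --- are available in the cited literature and are not the crux.
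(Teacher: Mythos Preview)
Your proposal is correct and follows essentially the same approach as the paper: the transference diagram \eqref{eq:transferenceCD} with $F_{1}=B^{\frac{1}{p}-\frac{1}{q}}_{p,1}(\R;X)$, $F_{2}=L^{q}(\R;X)$, $\iota$ the weighted group orbit (from \cite{Haase-Rozendaal16}), $P$ the averaging projection, and $S_{f}=T_{m_{f}}$ with $\|m_{f}\|_{L^{\infty}}\leq\|f\|_{\HT^{\infty}(\St_{\w})}$, followed by an application of \eqref{eq:multipliercalc}. Your extra paragraph on regularizers to pass from $\HT^{\infty}_{1}(\St_{\w})$ to general $f\in\HT^{\infty}(\St_{\w})$ is the standard mechanism that the paper leaves implicit (deferring to \cite{Rozendaal19}), and is handled correctly.
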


In fact, these techniques are not restricted to real interpolation spaces; they can also be applied to domains of fractional powers. More precisely, suppose that $X$ is isomorphic to a complemented subspace of a $p$-convex and $q$-concave Banach lattice, for $p\in[1,2]$ and $q\in[2,\infty)$\footnote{This assumption holds, for example, if $X=L^{r}(\Omega)$ for a measure space $\Omega$ and $r<\infty$, with $p=\min(2,r)$ and $q=\max(2,r)$.}. Then, for each $\lambda>\w$, there exists a $C\geq 0$ such that $D((\lambda+i A)^{-\frac{1}{p}+\frac{1}{q}})\subseteq D(f(A))$ and
\begin{equation}\label{eq:calctype2}
\|f(A)x\|_{X}\leq C\|f\|_{\HT^{\infty}(\St_{\w})}\|(\lambda+i A)^{-\frac{1}{p}+\frac{1}{q}}x\|_{X}
\end{equation}
for all $f\in\HT^{\infty}(\St_{\w})$ and $x\in D((\lambda+i A)^{-\frac{1}{p}+\frac{1}{q}})$.

Recall that the interpolation spaces $\D_{A}(\theta,1)$ and the fractional domains $\D((\lambda+i A)^{-\theta})$ grow as $\theta$ tends to zero, with $X=D((\lambda+iA)^{0})$ corresponding to the case where $X$ is a Hilbert space. Hence Theorem \ref{thm:calctype} and \eqref{eq:calctype2} show that, in the setting under consideration, functional calculus bounds depend in a quantitative manner on how close the geometry of the underlying space is to that of a Hilbert space. This should be compared with the results from the rest of this article, which involve a similar interplay between Fourier multiplier theory and Banach space geometry.

Theorem \ref{thm:calctype} and \eqref{eq:calctype2} have applications to numerical approximation schemes for evolution equations. Here we mention two of these. 

Firstly, an important problem in operator theory is to determine when the \emph{Cayley transform} $(1-A)(1+A)^{-1}$ of an operator $A$ with $-1\in\rho(A)$ is power bounded. This property is in turn equivalent to the stability of the Crank--Nicholson approximation scheme associated with $A$. Theorem \ref{thm:calctype} can be used to derive the stability of this approximation scheme for suitable initial data.

\begin{corollary}\label{cor:cayley}
Let $-A$ generate an exponentially stable $C_{0}$-semigroup $(T(t))_{t\geq 0}\subseteq\La(X)$ on a Banach space $X$ with type $p\in[1,2]$ and cotype $q\in[2,\infty)$. Suppose that $T(t)$ is invertible for all $t\geq0$. Then
\[
\sup_{n\geq1}\|(1-A)^{n}(1+A)^{-n}x\|_{X}<\infty
\]
for all $x\in (D(A),X)_{\frac{1}{p}-\frac{1}{q},1}$.
\end{corollary}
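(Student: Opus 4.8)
The plan is to realise the powers of the Cayley transform as a holomorphic functional calculus expression for a strip-type operator and to push this through the transference construction underlying Theorem~\ref{thm:calctype}. First I would pass from the semigroup to a group: since $T(t)$ is invertible for every $t\geq0$, the family $S(t):=T(t)^{-1}$, $t\geq0$, is a $C_{0}$-semigroup with generator $A$, so $(T(t))_{t\geq0}$ extends to a $C_{0}$-group $(T(t))_{t\in\R}$ generated by $-A$; fix $M,\w'\geq0$ with $\|T(s)\|_{\La(X)}\leq Me^{\w'|s|}$ for all $s\in\R$. Putting $B:=-iA$, one has $-iB=-A$, so $-iB$ generates the group $(T(t))_{t\in\R}$, $B$ is of strip type, $D(B)=D(A)$ with equal graph norms, and hence $(D(B),X)_{\frac1p-\frac1q,1}=(D(A),X)_{\frac1p-\frac1q,1}$. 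Exponential stability moreover gives $\sigma(A)\subseteq\{z\in\C\mid\Real(z)\geq\delta\}$ for some $\delta>0$; thus $-1\in\rho(A)$, the Cayley transform $V:=(1-A)(1+A)^{-1}=2(1+A)^{-1}-I$ is bounded, $\sigma(B)=-i\,\sigma(A)\subseteq\{w\in\C\mid\Imag(w)\leq-\delta\}$ lies strictly in the open lower half-plane, and, since $iB=A$, we may write $V=(1-iB)(1+iB)^{-1}=g(B)$ with $g(w):=\tfrac{1-iw}{1+iw}$, so that $(1-A)^{n}(1+A)^{-n}=V^{n}=g(B)^{n}$ for all $n$ by commutativity.

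Next I would record the analytic facts about $g$: it is holomorphic on $\C\setminus\{i\}$, its only singularity being a simple pole in the open upper half-plane, it satisfies $|g(w)|\leq1$ whenever $\Imag(w)\leq0$, and $|g(\xi)|=1$ for all $\xi\in\R$. With these in hand I would run the transference diagram \eqref{eq:transferenceCD} behind Theorem~\ref{thm:calctype}, taking $Y=Z=(D(A),X)_{\frac1p-\frac1q,1}$, $F_{1}=B^{\frac1p-\frac1q}_{p,1}(\R;X)$, $F_{2}=L^{q}(\R;X)$, with $\iota$ the bounded operator sending $x$ to a weighted version of the group orbit $s\mapsto T(-s)x$, $P$ the accompanying bounded projection, and $S_{g^{n}}=T_{m_{n}}$. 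The key point is that, because $\sigma(B)$ lies strictly below the real axis, the contour used to construct the symbol of $g(B)^{n}$ can be taken on $\R$, so $m_{n}=g^{n}\restriction_{\R}$ and $\|m_{n}\|_{L^{\infty}(\R)}=\sup_{\xi\in\R}|g(\xi)|^{n}=1$ for \emph{every} $n$. Since $X$ has type $p$ and cotype $q<\infty$, the multiplier estimate \eqref{eq:multipliercalc} gives $\|T_{m_{n}}\|_{\La(B^{\frac1p-\frac1q}_{p,1}(\R;X),\,L^{q}(\R;X))}\lesssim\|m_{n}\|_{L^{\infty}(\R)}=1$ uniformly in $n$, and the factorisation $V^{n}=P\circ T_{m_{n}}\circ\iota$ then yields $\|V^{n}x\|_{X}\lesssim\|x\|_{(D(A),X)_{\frac1p-\frac1q,1}}$ with a constant independent of $n$, which is the assertion.

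The hard part is exactly the uniform bound on the symbols. One cannot simply invoke Theorem~\ref{thm:calctype} with $f=g^{n}$: the pole of $g$ sits at distance $1$ from $\R$, whereas the group growth bound $\w'$, and hence the half-width $\w>\w'$ of the strip on which the $\HT^{\infty}(\St_{\w})$ calculus is available, can be arbitrarily large, so $g^{n}\notin\HT^{\infty}(\St_{\w})$ in general and $\|g^{n}\|_{\HT^{\infty}(\St_{\w})}\to\infty$. Circumventing this is precisely where exponential stability is used: it confines $\sigma(B)$ to the open lower half-plane, which is what lets the transference contour be shifted onto $\R$, where $|g^{n}|\equiv1$. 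Making this rigorous reduces to checking, from the construction underlying Theorem~\ref{thm:calctype}, that $\iota$ maps $(D(A),X)_{\frac1p-\frac1q,1}$ boundedly into $B^{\frac1p-\frac1q}_{p,1}(\R;X)$ and that $V^{n}=P\circ T_{g^{n}\restriction_{\R}}\circ\iota$; a convenient checkpoint is the identity $V^{n}=\int_{[0,\infty)}T(s)\,\ud\mu_{n}(s)$ for a finite measure $\mu_{n}$ on $[0,\infty)$ (built from the Laguerre expansion of $\big(\tfrac{1+z}{1-z}\big)^{n}$) whose Fourier transform equals $g^{n}$ on $\R$, so that $T_{g^{n}\restriction_{\R}}$ applied to the weighted orbit indeed reproduces $V^{n}x$.
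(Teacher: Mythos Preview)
Your argument is correct in substance and is faithful to the paper's indication that Corollary~\ref{cor:cayley} is derived from Theorem~\ref{thm:calctype}; the paper itself gives no details beyond that. You rightly isolate the obstruction to a naive application---with $B=-iA$ the pole of $g(w)=\frac{1-iw}{1+iw}$ sits at $w=i$, so for the $\w>\w'$ forced by the group growth one has $g^{n}\notin\HT^{\infty}(\St_{\w})$ in general, and even when $\w'<1$ one still has $\|g^{n}\|_{\HT^{\infty}(\St_{\w})}\to\infty$---and your resolution via the Hille--Phillips representation $V^{n}=\int_{[0,\infty)}T(s)\,\ud\mu_{n}(s)$ with $\widehat{\mu_{n}}=g^{n}|_{\R}$ and $\|g^{n}|_{\R}\|_{L^{\infty}(\R)}=1$ is the right idea.

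There is, however, a shorter route that treats Theorem~\ref{thm:calctype} as a genuine black box and avoids re-entering the transference construction. Exponential stability gives $\|T(t)\|\le Me^{-\w_{0}t}$ for $t\ge0$, while invertibility yields $\|T(-t)\|\le M'e^{\w_{1}t}$ for $t\ge0$ and some $\w_{1}\ge0$. For any $c>\w_{1}/2$ set $\wt B:=-i(A-c)$; then $-i\wt B$ generates the rescaled group $e^{ct}T(t)$, whose growth exponent is $\wt\w'=\max(c-\w_{0},\,\w_{1}-c)<c$. Since $i\wt B=A-c$, one has $V^{n}=\wt g^{\,n}(\wt B)$ with $\wt g(w):=\frac{(1-c)-iw}{(1+c)+iw}$, and a two-line computation shows $|\wt g(w)|\le1$ precisely when $\Imag(w)\le c$. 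Hence $\|\wt g^{\,n}\|_{\HT^{\infty}(\St_{\wt\w})}\le1$ for every $\wt\w\in(\wt\w',c)$ and every $n\ge1$, and Theorem~\ref{thm:calctype} applied to $\wt B$ gives the uniform bound immediately, using $D(\wt B)=D(A)$ with equivalent graph norms. This is the natural way to read the paper's ``Theorem~\ref{thm:calctype} can be used,'' and it is what your phrase ``shift the contour onto $\R$'' amounts to once translated into a statement about operators rather than symbols. Your approach has the merit of making the role of the Hille--Phillips measure explicit, at the cost of having to re-verify the factorisation $V^{n}=P\circ T_{\widehat{\mu_{n}}}\circ\iota$ with the specific weights underlying Theorem~\ref{thm:calctype}; the rescaling avoids this entirely.

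One small slip: in your diagram the codomain should be $Z=X$, not $(D(A),X)_{\frac{1}{p}-\frac{1}{q},1}$, since $P$ maps $L^{q}(\R;X)$ to $X$ and the desired estimate is $\|V^{n}x\|_{X}\lesssim\|x\|_{(D(A),X)_{\frac{1}{p}-\frac{1}{q},1}}$.
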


A second application concerns the convergence of rational approximation methods. A rational function $r\in\HT^{\infty}(\C_{-})$ on the complex left half-plane $\C_{-}:=\{z\in\C\mid \Real(z)<0\}$ is \emph{$\mathcal{A}$-stable} if $\|r\|_{\HT^{\infty}(\C_{-})}\leq 1$, and $r$ is a \emph{rational approximation (of the exponential function)} if there exist $k\in\N$ and $C\geq 0$ such that $|r(z)-\ue^{z}|\leq C|z|^{k+1}$ for all $z$ in a complex neighborhood of $0$. Theorem \ref{thm:calctype} implies the following result.

\begin{corollary}\label{cor:approx}
Let $r$ be an $\mathcal{A}$-stable rational approximation, and let $-A$ generate an exponentially stable $C_{0}$-semigroup $(T(t))_{t\geq 0}$ on a Banach space $X$ with type $p\in[1,2]$ and cotype $q\in[2,\infty)$. Suppose that $T(t)$ is invertible for all $t\geq0$. Then $r(-tA/n)^{n}x\to T(t)x$ as $n\to\infty$, for all $x\in (D_{A},X)_{\frac{1}{p}-\frac{1}{q},1}$. 
\end{corollary}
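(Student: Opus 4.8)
The plan is to identify both $r(-tA/n)^{n}$ and $T(t)$ as values of the holomorphic functional calculus of a single $C_{0}$-group generator, to control their difference by Theorem~\ref{thm:calctype}, and then to upgrade a uniform bound to convergence by a density argument along the smoothness scale. Fix $t>0$; the case $t=0$ is trivial since a rational approximation satisfies $r(0)=1$. Since $(T(t))_{t\ge0}$ is exponentially stable and every $T(t)$ is invertible, it extends to a $C_{0}$-group $(T(t))_{t\in\R}$ with $\|T(t)\|_{\La(X)}\le Me^{\w'|t|}$ for some $M\ge1$ and $\w'\ge0$; equivalently, writing $A':=-iA$, the operator $-iA'$ generates this group. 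Fix $\w>\w'$. Then $D(A')=D(A)$, so with $Y:=(D(A),X)_{\frac1p-\frac1q,1}$ Theorem~\ref{thm:calctype} provides a constant $C$, independent of the function, such that $\|f(A')x\|_{X}\le C\|f\|_{\HT^{\infty}(\St_{\w})}\|x\|_{Y}$ for all $f\in\HT^{\infty}(\St_{\w})$ and $x\in Y$.

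Next I would translate the operators into the strip calculus of $A'$. Setting $g(z):=e^{-itz}$, one has $T(t)=e^{-tA}=e^{-itA'}=g(A')$ with $g\in\HT^{\infty}(\St_{\w})$, since $|g(z)|\le e^{t\w}$ on $\St_{\w}$. Because $\A$-stability forces the poles of the rational function $r$ to lie in the open right half-plane, $r$ is holomorphic on $\{\Real z<c_{0}\}$ for some $c_{0}>0$; hence for all $n>t\w/c_{0}$ the function $\rho_{n}(z):=r(-itz/n)^{n}$ is holomorphic on $\St_{\w}$, and — using the identity $-itA'/n=-tA/n$ together with consistency of the functional calculi — one has $\rho_{n}(A')=r(-tA/n)^{n}$, where the right-hand side is the bounded operator defined through the rational calculus of $-A$ (legitimate since exponential stability places $\sigma(-tA/n)$ inside $\C_{-}$). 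With $g_{n}:=\rho_{n}-g$, the claim reduces to: $g_{n}(A')x\to0$ in $X$ for every $x\in Y$.

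The analytic heart is a Brenner--Thom\'ee-type estimate on $g_{n}$. From $\A$-stability, $|r(w)|\le1$ for $\Real w\le0$; since $r$ is rational with $r(\infty)$ finite and has no poles near $i\R$, one gets $|r(w)|\le e^{C_{1}\Real w}$ for $0\le\Real w\le\delta$; and since $r$ approximates $e^{w}$ to order $k$, $|r(w)-e^{w}|\le C_{2}|w|^{k+1}$ for $|w|\le\delta$. For $z\in\St_{\w}$ the point $w=-itz/n$ has $\Real w=(t/n)\Imag z\in(-t\w/n,t\w/n)$, which forces $\|\rho_{n}\|_{\HT^{\infty}(\St_{\w})}\le e^{C_{1}t\w}$ uniformly in $n$; telescoping $r(w)^{n}-e^{nw}$, with each factor bounded as above, gives $|g_{n}(z)|\le C_{t}\,n^{-k}|z|^{k+1}$ for $|z|\le\delta n/t$ and $|g_{n}(z)|\le C_{t}$ on all of $\St_{\w}$. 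Moreover $g_{n}$ vanishes to order $\ge k+1$ at $z=0$, because $r(\zeta)^{n}=e^{n\zeta}+O(n\zeta^{k+1})$ near $\zeta=0$ and so $g_{n}(z)=O(n^{-k}z^{k+1})$. I conclude, for $n$ large: (i) $\|g_{n}\|_{\HT^{\infty}(\St_{\w})}\le C_{t}$, uniformly in $n$; and (ii) $h_{n}(z):=g_{n}(z)/(-iz)^{k+1}$ is holomorphic on $\St_{\w}$ and satisfies $\|h_{n}\|_{\HT^{\infty}(\St_{\w})}\le C_{t}'\,n^{-k}\to0$ (divide the first bound by $|z|^{k+1}$ when $|z|\le\delta n/t$, and the global bound by $|z|^{k+1}\ge(\delta n/t)^{k+1}$ otherwise).

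Finally I would combine these with Theorem~\ref{thm:calctype} and density. By (i), $\sup_{n}\|g_{n}(A')\|_{\La(Y,X)}<\infty$. For $x$ in the dense subspace $D(A^{\infty})\subseteq Y$ one has $g_{n}(A')x=h_{n}(A')(-iA')^{k+1}x$, and $(-iA')^{k+1}x=(-1)^{k+1}A^{k+1}x\in D(A)\subseteq Y$, so by Theorem~\ref{thm:calctype} and (ii), $\|g_{n}(A')x\|_{X}\le C\|h_{n}\|_{\HT^{\infty}(\St_{\w})}\|A^{k+1}x\|_{Y}\to0$. An $\eps/3$ argument, approximating an arbitrary $x\in Y$ by elements of $D(A^{\infty})$ and invoking the uniform bound (i), then yields $g_{n}(A')x\to0$, i.e.\ $r(-tA/n)^{n}x\to T(t)x$, for all $x\in Y$. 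I expect the main obstacle to be the analytic core of the third paragraph: extracting the uniform $\HT^{\infty}(\St_{\w})$-bound on $\rho_{n}$ from $\A$-stability once $\Real w$ is allowed to be slightly positive (so that the bare bound $|r|\le1$ no longer applies directly) and the bound must survive the $n$-th power, together with the sharp $O(n^{-k})$ decay of $h_{n}$. The functional-calculus bookkeeping — consistency of the strip calculus of $-iA$ with the rational calculus of $-A$, and the reduction $T(t)=g(A')$ — is routine but must be carried out with care.
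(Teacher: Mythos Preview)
The paper does not supply a proof of this corollary: it merely states that ``Theorem~\ref{thm:calctype} implies the following result'' and refers to \cite{Egert-Rozendaal13,Gomilko-Tomilov13,Gomilko-Tomilov14,Kovacs04} and \cite[Remark 3.11]{Haase-Rozendaal13} for similar applications. Your proposal is precisely the kind of argument one expects behind that sentence, and it is carried out correctly: you pass to the group setting, identify both $T(t)$ and $r(-tA/n)^{n}$ as values of the strip calculus of $A'=-iA$, invoke Theorem~\ref{thm:calctype} for the uniform bound and for the decay of the regularised difference $h_{n}$, and close by density.

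The analytic core is handled well. The three-lines argument giving $|r(w)|\le e^{C_{1}\Real w}$ on $0\le\Real w\le\delta$ (from $|r|\le1$ on $i\R$ and boundedness of the rational $r$ on the strip) is exactly what is needed so that the bound survives the $n$-th power once $|\Real w|\le t\w/n$; the telescoping $r^{n}-e^{nw}=(r-e^{w})\sum_{j}r^{j}e^{(n-1-j)w}$ with $|r-e^{w}|\le C_{2}|w|^{k+1}$ then yields the Brenner--Thom\'ee estimate $|g_{n}(z)|\le C_{t}n^{-k}|z|^{k+1}$ on $|z|\le\delta n/t$, and the splitting into $|z|\le\delta n/t$ and $|z|>\delta n/t$ gives $\|h_{n}\|_{\HT^{\infty}(\St_{\w})}\to0$. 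The density of $D(A^{\infty})$ in $(D(A),X)_{\frac1p-\frac1q,1}$ and the multiplicativity $g_{n}(A')=h_{n}(A')(-iA')^{k+1}$ on $D(A^{k+1})$ are standard facts from the strip calculus. In short, your argument is the natural elaboration of the paper's one-line justification, and matches the treatment in the references cited there.
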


In Corollaries \ref{cor:cayley} and \ref{cor:approx}, under the assumptions of \eqref{eq:calctype2}, one obtains stability and convergence for initial data in the domain of a suitable fractional power of $A$. For similar applications of functional calculus theory to the stability of numerical approximation schemes, we refer to \cite{Egert-Rozendaal13,Gomilko-Tomilov13, Gomilko-Tomilov14,Kovacs04} and \cite[Remark 3.11]{Haase-Rozendaal13}.

\section{Open problems}\label{sec:open}

In this appendix we formulate a few open problems related to the material in this article.

\subsubsection{Endpoint cases for $(L^{p},L^{q})$ Fourier multiplier theorems}

The main goal in Section \ref{sec:multipliers} was to obtain extensions of the multiplier condition \eqref{eq:condm} in the scalar case: 
\[
\sup_{\xi\neq0}|\xi|^{n(\frac{1}{p}-\frac{1}{q})}|m(\xi)|<\infty,
\]
to operator-valued $(L^{p},L^{q})$ multipliers. Proposition \ref{prop:Fouriertype} and Theorem \ref{thm:typecotype} concerned two such extensions, but both results involve an implicit $\veps$ loss with respect to the decay assumption in \eqref{eq:condm}. In Theorem \ref{thm:Besov} we were able to remove this loss for multipliers on Besov spaces, but it remains unclear to what extent the $\veps$ loss can be removed for $(L^{p},L^{q})$ Fourier multipliers.

As motivation for this problem, we note that questions regarding endpoint cases in multiplier theorems have direct applications to stability theory. Indeed, although a continuity lemma yields the critical exponent in the exponential decay result in Theorem \ref{thm:Fourierdecay}, for more refined asymptotic behavior such an approach does not appear to yield concrete statements. Hence the extensions in \cite{Rozendaal-Veraar18a,Rozendaal-Veraar18b} of Theorems \ref{thm:Hilbertpoly} and \ref{thm:KreissHilbert} to non-Hilbertian Banach spaces involve an $\veps$ loss in the fractional domain parameter. Removal of the $\veps$ loss in the relevant Fourier multiplier theorems would yield the endpoint exponents in these stability results.

\subsubsection{Pitt's inequality}

We mention in passing a related problem, concerning vector-valued extensions of Pitt's inequality. In the scalar-valued setting, one has
\begin{equation}\label{eq:Pitt}
\Big(\int_{\Rn}|\wh{f}(\xi)|^{q}|\xi|^{-\gamma q}\ud \xi\Big)^{1/q}\lesssim \Big(\int_{\Rn}|f(x)|^{p}|x|^{\beta p}\ud x\Big)^{1/p},
\end{equation}
for $1<p<q <\infty$, if and only if $\max(0,n(\frac{1}{p}+\frac{1}{q}-1))\leq \gamma<n/q$ and $\beta-\gamma=n(1-\frac{1}{p}-\frac{1}{q})$. This inequality, which describes the mapping properties of the Fourier transform with respect to weighted Lebesgue spaces, simultaneously generalizes the inequalities of Hausdorff--Young and Hardy--Littlewood. 

The problem of determining vector-valued extensions of Pitt's inequality was posed in \cite{Rozendaal-Veraar18}. Substantial progress in this direction was made in \cite{Dominguez-Veraar21}, but, as before, various questions about endpoint cases remain open. Apart from the usefulness of the inequality itself, a better understanding of the endpoint cases in \eqref{eq:Pitt} might lead to a better understanding of the endpoint exponents in $(L^{p},L^{q})$ Fourier multiplier theorems, and vice versa. We refer to \cite{Dominguez-Veraar21} for more details.

\subsubsection{Exponential decay without $R$-boundedness}

Theorem \ref{thm:Fourierdecay} shows that, under the assumption that $X$ has Fourier type $p$ and that the resolvent of $A$ is  uniformly bounded on the standard right half-plane, $\|T(t)x\|_{X}$ decays exponentially for all $x\in D((-A)^{\frac{1}{p}-\frac{1}{p'}})$. On the other hand, by Theorem \ref{thm:typedecay}, the same decay statement holds for all $x\in D((-A)^{\frac{1}{p}-\frac{1}{q}})$ if $X$ has type $p$ and cotype $q$, and if the resolvent is $R$-bounded on the standard right half-plane. Since every Banach space with Fourier type $p$ has type (at least) $p$ and cotype (at most) $p'$, and because many spaces have better type and cotype properties than is dictated by their Fourier type, the conclusion of Theorem \ref{thm:typedecay} improves upon that of Theorem \ref{thm:Fourierdecay}. However, the $R$-boundedness assumption on the resolvent in Theorem \ref{thm:typedecay} is stronger than the uniform boundedness assumption in Theorem \ref{thm:Fourierdecay}, and it is an open problem to determine whether this $R$-boundedness assumption is necessary.

\subsubsection{The Kreiss condition on Hilbert spaces}

On Hilbert spaces, the Kreiss condition \eqref{eq:Kreiss} guarantees that the associated semigroup grows at most linearly. Moreover, for each $\gamma\in[0,1)$ there exists a semigroup $(T(t))_{t\geq0}$ on a Hilbert space $X$ such that $\|T(t)\|_{\La(X)}\gtrsim t^{\gamma}$ for $t\geq1$, and such that its generator satisfies the Kreiss condition \cite{EisZwa06}. It is an open problem whether such an example exists for $\gamma=1$, or whether the Kreiss condition implies a slightly sharper growth bound.

\subsubsection{Decay rates for functions of quasi-positive increase}

As indicated in Section \ref{sec:refined} and with notation as in Theorem \ref{thm:positive}, to obtain the inequality
\[
\|T(t)A^{-1}\|_{\La(X)}\leq \frac{C}{M^{-1}(t)}
\]
it is both necessary and sufficient to assume that $M$ has positive increase. However, there are various slowly-growing functions which do not have the property of positive increase, and it is natural to ask whether one can improve upon the Batty--Duyckaerts estimate \eqref{eq:BattyDuyck2} for some of these functions. In \cite{RoSeSt19} the class of functions of \emph{quasi-positive increase} was introduced, and improvements over \eqref{eq:BattyDuyck2} were obtained for this class. However, it is unclear what the optimal decay rate is for certain interesting functions of quasi-positive increase, such as $M(\lambda)=(\log(\lambda))^{\alpha}$ for $\lambda\geq e$ and $\alpha>0$.

\subsubsection{Refined decay rates and Banach space geometry}

Although the theory of $(L^{p},L^{q})$ Fourier multipliers provides one with a viewpoint to connect ideas from \cite{Batty-Duyckaerts08,Chill-Seifert16,BaChTo16} and prove Theorem \ref{thm:positive}, it is unclear whether such multipliers can be used to obtain improvements of the Batty--Duyckaerts estimate \eqref{eq:BattyDuyck2} on certain non-Hilbertian Banach spaces. A better understanding of the role that $(L^{p},L^{q})$ Fourier multipliers play in this setting might in turn lead to an improved understanding of the Hilbert space case and the remaining open problems there.

\subsubsection{Functional calculus for semigroup generators}

In Appendix \ref{sec:calctypecotype} we applied $(L^{p},L^{q})$ Fourier multiplier theorems to obtain functional calculus results for $C_{0}$-groups. Now, it is known that the functional calculus theory for $C_{0}$-semigroups is considerably more subtle than the theory for $C_{0}$-groups (see e.g.~\cite{BaGoTo21a,BaGoTo21b}). Nonetheless, in \cite{Haase11,Haase-Rozendaal13} transference principles were combined with Fourier multiplier theorems to obtain functional calculus results for $C_{0}$-semigroups. In those cases $(L^{p},L^{p})$ Fourier multiplier theorems were used. It is thus a natural problem to determine to what extent $(L^{p},L^{q})$ Fourier multiplier theorems, for $p\neq q$, can be combined with transference principles for semigroups to derive functional calculus properties of $C_{0}$-semigroups on non-Hilbertian Banach spaces.

\subsubsection{Other applications of $(L^{p},L^{q})$ Fourier multipliers}

It is perhaps fitting to conclude with a somewhat more general open problem. Namely, one of the aims in preparing this article has been to convey how the theory of $(L^{p},L^{q})$ Fourier multipliers can be applied to problems in a variety of other settings. It is not clear to the author why such applications should be restricted to stability theory and functional calculus theory. Hence the work discussed here naturally leads to the problem of determining additional fruitful applications of the theory of $(L^{p},L^{q})$ Fourier multipliers.

\bibliographystyle{plain}
\bibliography{Bibliography}

\end{document}